\newcolumntype{M}[1]{>{\centering\arraybackslash}m{#1}}
\renewcommand*{\backref}[1]{}
\renewcommand*{\backrefalt}[4]{[{\tiny%
    \ifcase #1 Not cited.%
          \or Cited on page~#2.%
          \else Cited on pages #2.%
    \fi%
    }]}
\numberwithin{equation}{section}
\let\oldtocsection=\tocsection
\let\oldtocsubsection=\tocsubsection
\renewcommand{\tocsection}[2]{\hspace{0em}\oldtocsection{#1}{#2}}
\renewcommand{\tocsubsection}[2]{\hspace{1em}\oldtocsubsection{#1}{#2}}
\def\Xint#1{\mathchoice
{\XXint\displaystyle\textstyle{#1}}%
{\XXint\textstyle\scriptstyle{#1}}%
{\XXint\scriptstyle\scriptscriptstyle{#1}}%
{\XXint\scriptscriptstyle\scriptscriptstyle{#1}}%
\!\int}
\def\XXint#1#2#3{{\setbox0=\hbox{$#1{#2#3}{\int}$ }
\vcenter{\hbox{$#2#3$ }}\kern-.6\wd0}}
\def\dashint{\Xint-}
\patchcmd{\@mn@margintest}{\@tempswafalse}{\@tempswatrue}{}{}
\patchcmd{\@mn@margintest}{\@tempswafalse}{\@tempswatrue}{}{}
\DeclareRobustCommand\widecheck[1]{{\mathpalette\@widecheck{#1}}}
\def\@widecheck#1#2{%
    \setbox\z@\hbox{\m@th$#1#2$}%
    \setbox\tw@\hbox{\m@th$#1%
       \widehat{%
          \vrule\@width\z@\@height\ht\z@
          \vrule\@height\z@\@width\wd\z@}$}%
    \dp\tw@-\ht\z@
    \@tempdima\ht\z@ \advance\@tempdima2\ht\tw@ \divide\@tempdima\thr@@
    \setbox\tw@\hbox{%
       \raise\@tempdima\hbox{\scalebox{1}[-1]{\lower\@tempdima\box
\tw@}}}%
    {\ooalign{\box\tw@ \cr \box\z@}}}
\title{On the Gauduchon Curvature of Hermitian Manifolds}
\author{Kyle Broder}
\address{The University of Queensland,  St. Lucia,  QLD 4067, Australia}
\email{k.broder@uq.edu.au}
\thanks{The first named author was partially supported by an Australian Government Research Training Program (RTP) Scholarship and funding from the Australian Government through the Australian Research Council's Discovery Projects funding scheme (project DP220102530). The second named author was supported an Australian Government Research Training Program (RTP) Scholarship.}
\author{James Stanfield}
\address{The University of Queensland,  St. Lucia,  QLD 4067, Australia}
\email{james.stanfield@uq.net.au}
\keywords{Curvature of Hermitian Manifolds; Gauduchon Connections; Holomorphic Sectional Curvature; Ricci curvature; Lichnerowicz Connection; Bismut Connection; Chern Connection; Kobayashi Hyperbolicity; Oka Manifolds}
\subjclass{53C55; 32Q05; 32Q15; 32Q45; 32Q56}
  \@ifdefinable{\myorg@nameref}{%
    \LetLtxMacro\myorg@nameref\nameref
    \DeclareRobustCommand*{\nameref}[1]{%
      \emph{\myorg@nameref{#1}}%
    }%
  }%
\theoremstyle{plain}
\theoremstyle{definition}
\newtheorem{thm}{{\textbf{Theorem}}}[section]
\newtheorem{thmfix}[thm]{{Theorem}}
\newtheorem{defn}[thm]{{Definition}}
\newtheorem{rmk}[thm]{{Remark}}
\newtheorem{notn}[thm]{{Notation}}
\newtheorem{ex}[thm]{{Example}}
\newtheorem{cor}[thm]{{Corollary}}
\newtheorem{prop}[thm]{{Proposition}}
\newtheorem{lem}[thm]{{Lemma}}
\newtheorem{q}[thm]{{Question}}
\begin{document}

\maketitle

\begin{abstract}
It is shown that many results, previously believed to be properties of the Lichnerowicz Ricci curvature, hold for the Ricci curvature of all Gauduchon connections. We prove the existence of $t$--Gauduchon Ricci-flat metrics on the suspension of a compact Sasaki--Einstein manifold, for all $t \in (-\infty,1)$; in particular, for the Bismut, Minimal, and Hermitian conformal connection. A monotonicity theorem is obtained for the Gauduchon holomorphic sectional curvature, illustrating a maximality property for the Chern connection and furnishing insight into known phenomena concerning hyperbolicity and the existence of rational curves. Moreover, we show a rigidity result for Hermitian metrics which have a pair of Gauduchon holomorphic sectional curvatures that are equal, elucidating a duality implicit in the recent work of Chen--Nie.
\end{abstract}

\thispagestyle{empty}
\tableofcontents

\section{Introduction}

Let $(\mathcal{E},h) \to X$ be a Hermitian (holomorphic) line bundle over a complex manifold $X$. A complex-linear connection $\nabla$ on $\mathcal{E}$ is said to be Hermitian if $\nabla h =0$.  The presence of a holomorphic variation in the fibers of $\mathcal{E}$ is encoded in a complex-linear first-order differential operator $\bar{\partial}^{\mathcal{E}}$ satisfying $\bar{\partial}^{\mathcal{E}} \circ \bar{\partial}^{\mathcal{E}}=0$ \cite{KoszulMalgrange}.  This remarkable fact allows the complex geometry of $(\mathcal{E},h) \to X$ to be studied through the differential geometry of the unique Hermitian connection satisfying $\nabla^{0,1} = \bar{\partial}^{\mathcal{E}}$. When $\mathcal{E} = T^{1,0}X$ is the tangent bundle, this connection is called the \textit{Chern connection}, denoted by ${}^c \nabla$.

There has been a rapidly growing interest in the study of more general Hermitian connections on $T^{1,0}X$.  The most notable example of this is the \textit{Strominger--Bismut} \textit{connection} \cite{Bismut,Strominger1986} -- the unique Hermitian connection ${}^b \nabla$ with totally skew-symmetric torsion -- which has played a role in string theory \cite{IvanovPapadopoulos,Strominger1986}, index theory \cite{Bismut}, and the pluriclosed flow \cite{StreetsVII,StreetsTian, StreetsTianPCF1, StreetsTianPCF2}. 

The \textit{Lichnerowicz connection} \cite{Lichnerowicz} -- the restriction ${}^l \nabla$ of the complexified Levi-Civita connection ${}^{\text{LC}} \nabla \otimes \mathbb{C}$ to $T^{1,0}X$ -- has recently received considerable interest \cite{HeLiuYang,LiuYangGeometry,LiuYangRicci,YangKodairaDimension}. In the aforementioned references, the Ricci curvatures of the Lichnerowicz connection are referred to as the `Levi-Civita Ricci curvatures'. We abandon this terminology, using `Lichnerowicz Ricci curvatures' since the connection was discovered by Lichnerowicz \cite{Lichnerowicz}, and because of the additional confusion, the older terminology introduced. 

Libermann \cite{Libermann1, Libermann2, Libermann3} introduced the \textit{Hermitian conformal connection} ${}^{\text{Hc}} \nabla$ -- The unique Hermitian connection ${}^{\text{Hc}} \nabla$ whose torsion satisfies the Bianchi identity. Among the Gauduchon connections that we will soon discuss,  the Hermitian conformal connection can be characterized by the corresponding Dirac operator being conformally covariant (see \cite{Libermann1, Libermann2,Libermann3, GauduchonHermitianConnections}) or the unique Libermann connection\footnote{We say that a connection is \textit{Libermann} if it satisfies $T_b^{1,1}=0$ and $\mathcal{B}(T_c^{1,1})=a\,d \omega + b\,d^c \omega$ for some $a,b \in \mathbb{R}$.} \cite{Libermann1} whose curvature is invariant under conformal deformation.\\

It was observed by Ehresmann and Libermann, and later expounded upon by Gauduchon (see \cite{GauduchonHermitianConnections} and the references therein) that the infinite-dimensional affine space of Hermitian connections on the tangent bundle of a complex manifold supports a distinguished (real) one-dimensional subspace -- what we now call the \textit{Gauduchon line}.  Let ${}^c \nabla$ and ${}^l \nabla$ respectively denote the Chern and Lichnerowicz connections. The \textit{$t$--Gauduchon connection} ${}^t \nabla$ (or \textit{Gauduchon connection} with \textit{Gauduchon parameter} $t$) is given by \begin{eqnarray*}
{}^t \nabla & : = & t {}^c \nabla + (1-t) {}^l \nabla, \hspace{1cm} t \in \mathbb{R}.
\end{eqnarray*}

In particular, ${}^1 \nabla = {}^c \nabla$ and ${}^0 \nabla = {}^l \nabla$.  This line of connections passes through a number of distinguished Hermitian connections that were discovered independently. For instance,  the Strominger--Bismut connection \cite{Strominger1986,Bismut} is given by ${}^b \nabla = {}^{-1} \nabla$, the Hermitian conformal connection \cite{Libermann1, Libermann2, Libermann3} corresponds to ${}^{\text{Hc}} \nabla =  {}^{\frac{1}{2}} \nabla$, and the \textit{Minimal connection} \cite{GauduchonHermitianConnections} -- the unique Hermitian connection ${}^{\text{min}} \nabla$ whose torsion has smallest pointwise norm -- corresponds to ${}^{\frac{1}{3}} \nabla$.\\ 

There has been gaining interest in studying the curvature of the Gauduchon connections. For instance, in \cite{LafuenteStanfield}, R. Lafuente the second named author completed the classification of compact Hermitian manifolds with flat (or more generally K\"ahler-like) Gauduchon connections. Specifically, the authors showed that except for the cases of a flat Chern or Bismut connection, such manifolds are K\"ahler. In this paper, we focus on other curvature tensors associated with the Gauduchon connections starting with the Ricci curvatures. The curvature of a Gauduchon connection ${}^t\nabla$ on $T^{1,0}X$ is an $\operatorname{End}(T^{1,0}X)$-valued $2$-form ${}^tR \in \Omega^{2}_X\otimes \operatorname{End}(T^{1,0}X)$ on $X$. We define the \emph{first $t$-Gauduchon Ricci form} ${}^t\operatorname{Ric}\in\Omega^{1,1}(X)$ as
\[
{}^t\operatorname{Ric}^{(1)}(u,\overline{v}) := \sqrt{-1}\operatorname{tr}{}^tR(u,\overline{v});\qquad u,v \in T^{1,0}X.
\]
Let us remark that in general this trace also has $(2,0)$ and $(0,2)$ components which we omit in favour of the conventions in \cite{LiuYangRicci,LiuYangLCRF,HeLiuYang,WangYang}. Moreover, in contrast with the Riemannian curvature tensor, the presence of torsion in the Gauduchon connections produces distinct Ricci curvatures by taking various traces (see Definition \ref{def:Ricci}).

Our first main result illustrates that certain cohomological properties which were previously thought to be a property of the Lichnerowicz Ricci curvature (corresponding to Gauduchon parameter $t=0$) hold for all Gauduchon connections. For instance, let us first remind the reader that the \textit{first Aeppli--Chern class} $c_1^{\text{AC}}(X) := c_1^{\text{AC}}(K_X^{-1})$ is a cohomology class in $H_A^{1,1}(X):= \{ \alpha \in \Omega_X^{1,1} : \partial \bar{\partial} \alpha=0\}/\{\partial u + \overline{\partial u} : u \in \mathcal{C}^{\infty}(X) \}$. This was studied by Liu--Yang \cite{LiuYangRicci} via the first Lichnerowicz Ricci curvature ${}^l \text{Ric}_{\omega}^{(1)} : = {}^0 \text{Ric}_{\omega}^{(1)}$. We obtain the following extension of \cite[Theorem 3.14]{LiuYangRicci}:

\begin{thm}\label{MainThm2}
Let $(X, \omega)$ be a Hermitian manifold.  The first Gauduchon Ricci form ${}^t \text{Ric}_{\omega}^{(1)}$ represents $c_1^{\text{AC}}(K_X^{-1}) \in H_A^{1,1}(X)$ for all $t \in \mathbb{R}$.  Moreover, \begin{itemize}
\item[(i)] ${}^t \text{Ric}_{\omega}^{(1)}$ is $d$--closed if and only if $t = 1$, or $\partial \bar{\partial} \bar{\partial}^{\ast}\omega=0$.
\item[(ii)] If $\bar{\partial} \partial^{\ast} \omega = 0$, then ${}^t \text{Ric}_{\omega}^{(1)}$ represents the $c_1(K_X^{-1}) \in H_{\text{DR}}^2(X,\mathbb{R})$, i.e.,  $c_1(K_X^{-1})=c_1^{\text{AC}}(K_X^{-1})$.
\item[(iii)] If $\omega$ is conformally balanced, then ${}^t \text{Ric}_{\omega}^{(1)}$ represents $c_1(K_X^{-1}) \in H_{\bar{\partial}}^{1,1}(X)$ and also the first Bott--Chern class $c_1^{\text{BC}}(K_X^{-1}) \in H_{\text{BC}}^{1,1}(X)$. 
\item[(iv)] ${}^t \text{Ric}_{\omega}^{(1)} = {}^s \text{Ric}_{\omega}^{(1)}$ for $t \neq s$ if and only if $\omega$ is balanced.
\end{itemize} \end{thm}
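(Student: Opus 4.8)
The plan is to show that $t \mapsto {}^{t}\text{Ric}^{(1)}_{\omega}$ is \emph{affine}, equal to the first Chern Ricci form at $t=1$, and then to read off the four assertions from the single correction term at $t=0$ (which is the substance of \cite{LiuYangRicci}). Set $\gamma := {}^{c}\nabla - {}^{l}\nabla \in \Omega^{1}_{X}\otimes\operatorname{End}(T^{1,0}X)$, so that ${}^{t}\nabla = {}^{l}\nabla + t\gamma$. The curvature ${}^{t}R$ is quadratic in $t$, but its endomorphism trace is not: the quadratic term $\operatorname{tr}(\gamma\wedge\gamma)$ is a trace of commutators and vanishes, while $\operatorname{tr}$ commutes with exterior covariant differentiation, so $\operatorname{tr}{}^{t}R = (1-t)\operatorname{tr}{}^{l}R + t\operatorname{tr}{}^{c}R$. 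Equivalently, the Hermitian connection that ${}^{t}\nabla$ induces on $K_{X}^{-1}=\det T^{1,0}X$ has, in a local holomorphic frame, connection form $\partial\log\det(g) - (1-t)\psi$ with $\psi := \operatorname{tr}(\gamma)$, and the curvature of a line-bundle connection is $d$ of its connection form. Since $\operatorname{tr}{}^{c}R = \bar\partial\partial\log\det(g)$ is of type $(1,1)$ and $d$-closed, and ${}^{t}\text{Ric}^{(1)}_{\omega}$ is $\sqrt{-1}$ times the $(1,1)$-component of $\operatorname{tr}{}^{t}R$, we obtain for every $t\in\mathbb{R}$
\[ {}^{t}\text{Ric}^{(1)}_{\omega} \;=\; \text{Ric}^{(1)}_{\omega} + (1-t)\,\Xi, \qquad \Xi := {}^{l}\text{Ric}^{(1)}_{\omega} - \text{Ric}^{(1)}_{\omega} = -\sqrt{-1}\,(d\psi)^{1,1}, \]
where $\text{Ric}^{(1)}_{\omega} = {}^{1}\text{Ric}^{(1)}_{\omega}$ is the first Chern Ricci form.

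The crux is to identify $\psi$. Expressing the Levi-Civita connection of $\omega$ through $\partial\omega$ and $\bar\partial\omega$ and tracing the resulting difference tensor over $T^{1,0}X$ gives $\psi = -\tfrac{\sqrt{-1}}{2}\,d^{\ast}\omega$ -- essentially the computation behind \cite[Theorem~3.14]{LiuYangRicci} in the case $t=0$ -- so that
\[ \Xi \;=\; -\tfrac12\,(dd^{\ast}\omega)^{1,1} \;=\; -\tfrac12\bigl(\bar\partial\bar\partial^{\ast}\omega + \partial\partial^{\ast}\omega\bigr). \]
This is the step I expect to be the main obstacle; everything else is bidegree bookkeeping, plus one integration by parts for (iv). From this formula, $\Xi = \bar\partial(-\tfrac12\bar\partial^{\ast}\omega) + \partial(-\tfrac12\partial^{\ast}\omega) \in \operatorname{im}\bar\partial + \operatorname{im}\partial$, so $\partial\bar\partial\Xi = 0$ and $[\Xi] = 0$ in $H_A^{1,1}(X)$. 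As $\text{Ric}^{(1)}_{\omega}$ is a $d$-closed real $(1,1)$-form representing $c_1^{\text{BC}}(K_X^{-1})$, hence $c_1^{\text{AC}}(K_X^{-1})$ (under the natural map $H_{\text{BC}}^{1,1}(X)\to H_A^{1,1}(X)$), we conclude that ${}^{t}\text{Ric}^{(1)}_{\omega} = \text{Ric}^{(1)}_{\omega} + (1-t)\Xi$ is $\partial\bar\partial$-closed and represents $c_1^{\text{AC}}(K_X^{-1})$ for all $t$, which is the first assertion.

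For (ii): conjugating, $\bar\partial\partial^{\ast}\omega = 0 \iff \partial\bar\partial^{\ast}\omega = 0$, which says exactly that the $(2,0)$- and $(0,2)$-parts of $dd^{\ast}\omega$ vanish; then $dd^{\ast}\omega = (dd^{\ast}\omega)^{1,1} = -2\Xi$, so $\Xi = -\tfrac12\,d(d^{\ast}\omega)$ is $d$-exact and ${}^{t}\text{Ric}^{(1)}_{\omega} = \text{Ric}^{(1)}_{\omega} - \tfrac{1-t}{2}\,d(d^{\ast}\omega)$ is a $d$-closed $(1,1)$-form in the de Rham class $[\text{Ric}^{(1)}_{\omega}]_{\text{DR}} = c_1(K_X^{-1})$; combined with the first assertion this is the stated equality $c_1(K_X^{-1}) = c_1^{\text{AC}}(K_X^{-1})$. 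For (iii): $\omega$ is conformally balanced iff its Lee form is exact, which (up to a nonzero real constant) is equivalent to $\bar\partial^{\ast}\omega = \sqrt{-1}\,\partial f$ for some $f \in C^{\infty}(X,\mathbb{R})$; then $(dd^{\ast}\omega)^{1,1}$ is a nonzero constant multiple of $\sqrt{-1}\,\partial\bar\partial f$, so $\Xi$ is $\partial\bar\partial$-exact and ${}^{t}\text{Ric}^{(1)}_{\omega}$ differs from $\text{Ric}^{(1)}_{\omega}$ by $\sqrt{-1}\partial\bar\partial(\text{a real function})$. It therefore represents $c_1^{\text{BC}}(K_X^{-1}) \in H_{\text{BC}}^{1,1}(X)$ and, being $\bar\partial$-closed, also $c_1(K_X^{-1}) \in H_{\bar\partial}^{1,1}(X)$.

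Finally, since $\text{Ric}^{(1)}_{\omega}$ is $d$-closed we have $d\,{}^{t}\text{Ric}^{(1)}_{\omega} = (1-t)\,d\Xi$, so for $t\ne 1$ closedness is equivalent to $d\Xi = 0$. Differentiating the type decomposition of $dd^{\ast}\omega$, using $d(dd^{\ast}\omega)=0$ and $(dd^{\ast}\omega)^{2,0} = \partial\bar\partial^{\ast}\omega$, yields $d\,(dd^{\ast}\omega)^{1,1} = \partial\bar\partial\bar\partial^{\ast}\omega - \partial\bar\partial\partial^{\ast}\omega$, a sum of a $(2,1)$- and a $(1,2)$-form that are conjugate up to sign; hence $d\Xi = 0 \iff \partial\bar\partial\bar\partial^{\ast}\omega = 0$, which is (i). For (iv), affineness gives ${}^{t}\text{Ric}^{(1)}_{\omega} - {}^{s}\text{Ric}^{(1)}_{\omega} = (s-t)\,\Xi$, so for $t\ne s$ the two coincide iff $\Xi = 0$, i.e.\ $(dd^{\ast}\omega)^{1,1} = \bar\partial\bar\partial^{\ast}\omega + \partial\partial^{\ast}\omega = 0$. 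If $\omega$ is balanced then $d^{\ast}\omega=0$ and this is immediate; conversely, pairing with $\omega$ and integrating over the (compact) manifold,
\[ \int_{X}\bigl\langle \bar\partial\bar\partial^{\ast}\omega + \partial\partial^{\ast}\omega,\,\omega\bigr\rangle\,dV_{\omega} \;=\; \|\bar\partial^{\ast}\omega\|^{2} + \|\partial^{\ast}\omega\|^{2}, \]
so $\Xi = 0$ forces $d^{\ast}\omega = 0$, i.e.\ $\omega$ is balanced.
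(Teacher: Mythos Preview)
Your argument is essentially the paper's: both hinge on the single identity
\[
{}^{t}\text{Ric}^{(1)}_{\omega} \;=\; {}^{c}\text{Ric}^{(1)}_{\omega} + \tfrac{t-1}{2}\bigl(\partial\partial^{\ast}\omega + \bar\partial\bar\partial^{\ast}\omega\bigr),
\]
and then read off (i)--(iv) by bidegree bookkeeping. Two small differences are worth noting. First, you obtain the formula by the affineness-of-trace observation $\operatorname{tr}(\gamma\wedge\gamma)=0$ on the induced line-bundle connection, whereas the paper extracts it as a trace of its full curvature formula (their Theorem giving ${}^{t}R_{i\bar j k\bar\ell}$); your route is shorter and avoids the torsion bookkeeping. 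Second, for (iv) the paper argues pointwise, asserting that $\partial\partial^{\ast}\omega + \bar\partial\bar\partial^{\ast}\omega = 0$ forces each summand to vanish and hence $\omega$ is balanced; you instead integrate against $\omega$ to get $\|\bar\partial^{\ast}\omega\|^{2}+\|\partial^{\ast}\omega\|^{2}=0$. Your step is cleaner but imports a compactness hypothesis that is not in the statement, so you should either flag that assumption explicitly or supply a pointwise argument as the paper intends.
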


This refined understanding of the Gauduchon Ricci curvatures permits us to obtain the following obstruction to the existence of a locally conformally K\"ahler structure. We remind the reader that a locally conformally K\"ahler metric is a Hermitian metric that is conformal to a K\"ahler metric on an open neighborhood around any given point. We say a complex manifold is locally conformally K\"ahler if it admits a locally conformally K\"ahler metric. We encourage the reader to consult the recent book by Ornea--Verbitsky \cite{OrneaVerbitsky} for a reference to the subject. 

\begin{thm}
Let $(X,\omega)$ be a compact Hermitian manifold with ${}^t \text{Ric}_{\omega}^{(1)}=0$ for some $t \in \mathbb{R} \backslash \{ 1 \}$. If $c_1(K_X) \neq 0$ in $H_{\text{DR}}^2(X,\mathbb{R})$, then $X$ is not locally conformally K\"ahler.
\end{thm}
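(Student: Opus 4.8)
The plan is to argue by contradiction: from the locally conformally Kähler hypothesis I extract two differential identities for the Lee form, and feed these into parts (i)--(ii) of Theorem \ref{MainThm2}, using that ${}^t \text{Ric}_{\omega}^{(1)} = 0$ has already forced $c_1^{\text{AC}}(K_X^{-1})$ to vanish.

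Suppose $X$ carries an LCK metric $\omega_0$, with Lee form $\theta$, so $d\omega_0 = \theta \wedge \omega_0$ and $d\theta = 0$ (the latter is automatic once $\dim_{\mathbb{C}} X \geq 3$ and is part of the definition when $\dim_{\mathbb{C}} X = 2$). The crucial step is to show $\bar{\partial} \partial^{\ast} \omega_0 = 0$ and $\partial \bar{\partial} \bar{\partial}^{\ast}\omega_0 = 0$. One uses the classical fact that $d^{\ast}\omega_0$ is, up to a nonzero constant, $J$ applied to the Lee form $\theta$; splitting $d^{\ast}\omega_0 = \partial^{\ast}\omega_0 + \bar{\partial}^{\ast}\omega_0$ into bidegrees $(0,1)$ and $(1,0)$, this says $\partial^{\ast}\omega_0$ is a nonzero multiple of $\theta^{0,1}$ and $\bar{\partial}^{\ast}\omega_0$ a nonzero multiple of $\theta^{1,0}$. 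The identity $d\theta = 0$ has $(0,2)$-component $\bar{\partial}\theta^{0,1} = 0$ and $(1,1)$-component $\bar{\partial}\theta^{1,0} = -\partial\theta^{0,1}$; hence $\bar{\partial}\partial^{\ast}\omega_0$ is a multiple of $\bar{\partial}\theta^{0,1} = 0$, and $\partial\bar{\partial}\bar{\partial}^{\ast}\omega_0$ is a multiple of $\partial\bar{\partial}\theta^{1,0} = -\partial\partial\theta^{0,1} = 0$.

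With both vanishings in hand, Theorem \ref{MainThm2}(i) shows ${}^t \text{Ric}_{\omega_0}^{(1)}$ is $d$-closed for every $t$, and Theorem \ref{MainThm2}(ii), whose hypothesis $\bar{\partial}\partial^{\ast}\omega_0 = 0$ we have just verified, shows that this $d$-closed form represents $c_1(K_X^{-1}) \in H_{\text{DR}}^2(X,\mathbb{R})$; thus, via $\omega_0$, the de Rham first Chern class and the Aeppli--Chern class are represented by the same form, and one vanishes precisely when the other does. On the other hand, ${}^t \text{Ric}_{\omega}^{(1)} = 0$ for some $t \neq 1$ for the given metric $\omega$, and by Theorem \ref{MainThm2} the first Gauduchon Ricci form always represents $c_1^{\text{AC}}(K_X^{-1}) \in H_A^{1,1}(X)$; hence $c_1^{\text{AC}}(K_X^{-1}) = 0$. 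Therefore $c_1(K_X^{-1}) = 0$ in $H_{\text{DR}}^2(X,\mathbb{R})$, i.e. $c_1(K_X) = 0$, contradicting the hypothesis, and so $X$ is not locally conformally Kähler.

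The main obstacle is the first step: identifying $d^{\ast}\omega_0$ (equivalently, the Gauduchon torsion $1$-form) with a multiple of the Lee form with the right constants, and checking that the correct bidegree components of $d\theta = 0$ really annihilate $\bar{\partial}\partial^{\ast}\omega_0$ and $\partial\bar{\partial}\bar{\partial}^{\ast}\omega_0$. Everything after that is a formal combination with Theorem \ref{MainThm2}; the one point requiring care there is the precise content of the equality $c_1(K_X^{-1}) = c_1^{\text{AC}}(K_X^{-1})$ in part (ii) -- it is exactly this (that a metric with $\bar{\partial}\partial^{\ast}\omega = 0$ represents both classes by one form) which lets the Aeppli vanishing seen from $\omega$ propagate to the de Rham vanishing seen from $\omega_0$.
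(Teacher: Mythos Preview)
Your argument has a genuine gap at the very end, in the passage from the Aeppli vanishing to the de Rham vanishing. You correctly establish that for an LCK metric $\omega_0$ one has $\bar{\partial}\partial^{\ast}\omega_0 = 0$ (your computation with the Lee form is fine), and hence by Theorem~\ref{MainThm2}(ii) the form ${}^t\text{Ric}_{\omega_0}^{(1)}$ is $d$-closed and represents $c_1(K_X^{-1})$ in $H^2_{\text{dR}}$. You also correctly observe that ${}^t\text{Ric}_{\omega}^{(1)}=0$ for the given metric $\omega$ forces $c_1^{\text{AC}}(K_X^{-1})=0$. But the sentence ``one vanishes precisely when the other does'' is not justified: $c_1^{\text{AC}}=0$ only tells you that ${}^t\text{Ric}_{\omega_0}^{(1)}$ is Aeppli-exact, i.e.\ of the form $\partial u + \overline{\partial u}$ for some $(0,1)$-form $u$, and this does \emph{not} imply $d$-exactness unless $\bar{\partial}u=0$, which you have no reason to expect. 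The implication $c_1^{\text{AC}}=0 \Rightarrow c_1=0$ fails in general, and nothing in Theorem~\ref{MainThm2}(ii) supplies it --- that statement only says the same form sits in both cohomologies, not that the two classes vanish together. Indeed, whether $c_1^{\text{AC}}=0$ can ever coexist with $c_1\neq 0$ is exactly the open question the paper raises immediately after this theorem.

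The paper's route avoids this trap by never passing through the Aeppli class at all. Proposition~\ref{LcKChernClassLemma} computes directly, for an LCK metric with Lee form $\vartheta$, that ${}^t\text{Ric}_{\omega}^{(1)} = \alpha + \tfrac{t(1-n)-1}{2}\,d(J\vartheta)$ with $\alpha$ a de Rham representative of $2\pi c_1(K_X^{-1})$; since the correction is $d$-exact, ${}^t\text{Ric}_{\omega}^{(1)}=0$ gives $c_1=0$ immediately. Note that this argument uses the vanishing of the Gauduchon Ricci form for the LCK metric itself, not for an auxiliary metric. Your two-metric strategy is the natural reading if one wants to obstruct the existence of \emph{any} LCK structure on $X$, but to make it work you would need an independent proof that $c_1^{\text{AC}}=0 \Rightarrow c_1=0$ holds on LCK manifolds, and that is precisely what is missing.
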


\begin{rmk}
One cannot relax the assumption of the above theorem to the vanishing of the first Aeppli Chern class $c_1^{\text{AC}} =0$ in $H_A^{1,1}(X)$ (see Remark \ref{relaxAeppli} for more details). Moreover, if ${}^t\text{Ric}_{\omega}^{(1)}=0$, then $c_1^{\text{BC}}(K_X)=0$, which implies that $c_1(K_X)=0$. 
\end{rmk}

It is natural to seek compact non-K\"ahler examples of Hermitian metrics for which ${}^t\operatorname{Ric} \equiv 0$. In \cite{LiuYangRicci,LiuYangLCRF,HeLiuYang}, Liu and Yang gave an explicit construction of \emph{first Lichnerowicz Ricci-flat} metrics (i.e. satisfying ${}^0\operatorname{Ric}^{(1)} \equiv 0$) on Hopf manifolds $\mathbb{S}^{2n-1}\times \mathbb{S}^{1}$. This work was extended by Wang and Yang, who gave examples of metrics on Hopf manifolds satisfying ${}^t\operatorname{Ric}^{(1)} \equiv 0$ for all $t < 1$, which includes all previously distinguished Hermitian connections, except for the Chern connection (occuring at $t = 1$) \cite[Theorem 1.11]{WangYang}.

Correa \cite{Correa} extended the Liu--Yang construction \cite{LiuYangRicci}, producing first Lichnerowicz Ricci-flat metrics on the suspension of a compact Sasaki--Einstein manifold (endowed with a Sasaki automorphism and positive constant). Recall that if $(\mathcal{Q}, g_{\mathcal{Q}})$ is a Riemannian manifold,  we say that $(\mathcal{Q}, g_{\mathcal{Q}})$ is \textit{Sasakian} if the metric cone $(\mathcal{C}(\mathcal{Q}), g_{\mathcal{C}(\mathcal{Q})})$, where $\mathcal{C}(\mathcal{Q}): = \mathcal{Q} \times \mathbb{R}_+$ and $g_{\mathcal{C}(\mathcal{Q})} : = r^2 g_{\mathcal{Q}} + dr \otimes dr$, is a K\"ahler cone. A \textit{Sasaki morphism} is an isometric immersion $(\mathcal{Q}_1,g_{\mathcal{Q}_1}) \to (\mathcal{Q}_2,g_{\mathcal{Q}_2})$ such that the induced map on the cones $\mathcal{C}(\mathcal{Q}_1) \to \mathcal{C}(\mathcal{Q}_2)$ is holomorphic. A \textit{Sasaki automorphism} is an invertible Sasaki morphism $\varphi : (\mathcal{Q}, g_{\mathcal{Q}}) \to (\mathcal{Q}, g_{\mathcal{Q}})$ with $\varphi^{-1}$ a Sasaki morphism. Given a Sasaki automorphism $\varphi$ and a real number $\kappa \in (0,\infty)\setminus\{1\}$, we define the suspension by $(\varphi,\kappa)$ of $\mathcal{Q}$, as
\[
\Sigma_{\varphi,\kappa}(\mathcal{Q}) \ := \  \frac{\mathcal{Q}\times [0,\log \kappa]}{(\varphi(x),0)\sim (x,\log \kappa)}.
\]
We extend the results of both Correa and Wang--Yang, obtaining the following:

\begin{thm}\label{MainThm4}
Let $(\mathcal{Q},g_{\mathcal{Q}})$ be a compact Sasaki--Einstein manifold. Let $\Phi : \mathcal{Q} \to \mathcal{Q}$ be a Sasaki automorphism and $\kappa >0$ a positive constant. Then the suspension $\Sigma_{\Phi,\kappa}(\mathcal{Q})$ admits a Hermitian metric $\omega$ such that $${}^t \text{Ric}_{\omega}^{(1)} \ = \ 0$$ for all $t \in (-\infty,1)$.  In particular,  $\Sigma_{\Phi,\kappa}(\mathcal{Q})$ supports first Bismut Ricci-flat metrics,  first Hermitian conformal Ricci-flat metrics, first Minimal Ricci-flat metrics, and first Lichnerowicz Ricci-flat metrics. 
\end{thm}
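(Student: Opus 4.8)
The plan is to revisit the constructions of Correa and of Wang--Yang from the point of view of the Sasaki cone, trading their explicit coordinate computations for the transverse K\"ahler--Einstein geometry of $\mathcal{Q}$. First I would record the complex-geometric model of the suspension. Write $\dim_{\mathbb{R}}\mathcal{Q}=2m+1$. Since $(\mathcal{Q},g_{\mathcal{Q}})$ is Sasaki--Einstein, the metric cone $(\mathcal{C}(\mathcal{Q}),g_{\mathcal{C}(\mathcal{Q})})$ is a Ricci-flat K\"ahler cone with K\"ahler form $\omega_{\mathcal{C}}=\tfrac12\,d(r^{2}\eta)$; in particular $K_{\mathcal{C}(\mathcal{Q})}$ is holomorphically trivial and carries a nowhere-vanishing holomorphic $(m+1)$-form $\Omega$, normalised by the Calabi--Yau equation $\omega_{\mathcal{C}}^{m+1}=c_{m}\,\Omega\wedge\overline{\Omega}$ and homogeneous of degree $m+1$ under the Euler field $r\partial_{r}$. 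A Sasaki automorphism $\Phi$ extends to a holomorphic isometry $\widehat{\Phi}$ of the cone that fixes $r$ and commutes with $r\partial_{r}$, so the composite $\gamma:=h_{\kappa}\circ\widehat{\Phi}$ of the homothety $h_{\kappa}\colon(x,r)\mapsto(x,\kappa r)$ with $\widehat{\Phi}$ is a free, properly discontinuous holomorphic automorphism, $\Sigma_{\Phi,\kappa}(\mathcal{Q})\cong\mathcal{C}(\mathcal{Q})/\langle\gamma\rangle$ as complex manifolds, and $\gamma^{*}\Omega=\kappa^{\,m+1}e^{\sqrt{-1}\vartheta}\,\Omega$ for a constant $\vartheta$.

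For the metric I would take $\omega$ to be the descent of a $\gamma$-invariant Hermitian form on $\mathcal{C}(\mathcal{Q})$. The form $r^{-2}\omega_{\mathcal{C}}$ (equivalently, with $r=e^{s}$, the Vaisman metric $ds\wedge\eta+\tfrac12 d\eta$) is $\gamma$-invariant and descends, but as already in Liu--Yang's Hopf example it need not be Chern Ricci-flat; I would therefore allow a one-parameter modification of the radial--Reeb block -- keeping the transverse K\"ahler piece $g^{T}$ but rescaling $r^{-2}\,dr\otimes dr$ by a constant $\lambda>0$, to be pinned down later. Because $\widehat{\Phi}$ preserves $r$, $\eta$, $d\eta$ and $g^{T}$, any such form is automatically $\gamma$-invariant and descends to $\Sigma_{\Phi,\kappa}(\mathcal{Q})$; for $\Phi=\mathrm{id}$ and $\mathcal{Q}$ the round sphere it recovers the Liu--Yang/Wang--Yang metric on the Hopf manifold, and in general it is (a normalisation of) Correa's metric.

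Next I would compute ${}^{t}\text{Ric}_{\omega}^{(1)}$ on the cover. By the relations among the forms ${}^{t}\text{Ric}_{\omega}^{(1)}$ established en route to Theorem~\ref{MainThm2}, the first $t$-Gauduchon Ricci form differs from the Chern--Ricci form $\operatorname{Ric}^{C}_{\omega}:={}^{1}\text{Ric}_{\omega}^{(1)}=-\sqrt{-1}\,\partial\bar\partial\log\det_{\omega}$ by a term depending linearly on $1-t$ and built from the Gauduchon torsion $1$-form $\tau_{\omega}$ -- which for the metric above is a multiple of the Lee form $d\log r$ -- so the vanishing of ${}^{t}\text{Ric}_{\omega}^{(1)}$ for all $t<1$ reduces to two identities: $\operatorname{Ric}^{C}_{\omega}=0$ and the vanishing of the torsion term. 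For the first, on the cover $\det_{\omega}$ is, up to $c_{m}$, the ratio of $\omega^{m+1}$ to $\Omega\wedge\overline{\Omega}$, and by the Calabi--Yau normalisation of $\Omega$ together with the explicit $r$-dependence of $\omega$ this ratio is a constant times a power of $r$, so $\log\det_{\omega}$ is pluriharmonic; this is exactly what fixes $\lambda$. For the second, the torsion term evaluated on $d\log r$ reduces, via the structure equations of the Sasaki cone, to the transverse Einstein identity $\operatorname{Ric}^{T}=(2m+2)\,g^{T}$ for $g_{\mathcal{Q}}$ -- precisely the Sasaki--Einstein hypothesis. Since $\widehat{\Phi}$ is a Sasaki automorphism, every quantity entering these computations is $\widehat{\Phi}$- (hence $\gamma$-) invariant, so both identities descend to $\Sigma_{\Phi,\kappa}(\mathcal{Q})$, giving ${}^{t}\text{Ric}_{\omega}^{(1)}=0$ there for all $t<1$; the statements for the Bismut ($t=-1$), minimal ($t=\tfrac13$), Hermitian conformal ($t=\tfrac12$) and Lichnerowicz ($t=0$) connections then follow at once.

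I expect the main obstacle to be the second identity above: isolating the precise combination of the Chern torsion and the transverse curvature that occurs and showing that it is annihilated by the transverse Einstein condition. This is where Wang--Yang's explicit computation on $\mathbb{C}^{n}\setminus\{0\}$ must be upgraded to an invariant computation on the Sasaki cone, and it is sensitive to whether the Reeb foliation is regular, quasi-regular, or irregular -- so the transverse metric, connection and Ricci curvature have to be manipulated through foliated charts (or through the cone) rather than through a quotient orbifold. A secondary difficulty is choosing $\lambda$ (and, if the radial--Reeb ansatz proves too rigid, the full shape of $\omega$) so that the Chern--Ricci and torsion terms vanish simultaneously, together with the bookkeeping needed when $\Phi\neq\mathrm{id}$ -- which is essentially the technical content that Correa added to the Liu--Yang argument.
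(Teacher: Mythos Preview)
Your plan has a genuine gap, and it stems from reading the theorem as asserting the existence of a \emph{single} metric with ${}^t\text{Ric}_\omega^{(1)}=0$ for every $t<1$. The paper's statement is admittedly ambiguous, but its proof produces, for each fixed $t<1$, a metric $\omega_\lambda$ with $\lambda=\lambda(t)=\tfrac{t(1-n)-1}{n}$. Your strategy of killing the Chern--Ricci form and the torsion term separately cannot work: by Theorem~\ref{GauduchonRicciRelations},
\[
{}^t\text{Ric}_\omega^{(1)}={}^c\text{Ric}_\omega^{(1)}+\tfrac{t-1}{2}\bigl(\partial\partial^{\ast}\omega+\bar\partial\bar\partial^{\ast}\omega\bigr),
\]
so forcing both summands to vanish would give a Chern Ricci-flat metric and a balanced metric simultaneously. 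Already for $\mathcal{Q}=\mathbb{S}^{2n-1}$ (the Hopf manifold) this is impossible, since $c_1^{\text{BC}}(K_X^{-1})\neq 0$ precludes any Chern Ricci-flat metric, and the Vaisman metric has nonzero Lee form so is never balanced. Relatedly, your claim that $\log\det_\omega$ is pluriharmonic because it is an affine function of $\log r$ is false: on $\mathbb{C}^n\setminus\{0\}$ one has $\sqrt{-1}\,\partial\bar\partial\log|z|^2$ equal to the pullback of the Fubini--Study form, which is nonzero.

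The paper's route is quite different and more direct. One first observes that the natural metric $\omega$ on $\Sigma_{\Phi,\kappa}(\mathcal{Q})$ is Vaisman (hence locally conformally K\"ahler), so Proposition~\ref{LcKChernClassLemma} gives ${}^t\text{Ric}_\omega^{(1)}=\alpha+\tfrac{t(1-n)-1}{2}\,d(J\vartheta)$ with $\alpha$ a representative of $2\pi c_1(K_X^{-1})$. The Sasaki--Einstein hypothesis enters exactly once: it makes the cone metric Ricci-flat K\"ahler, so $\alpha=0$ after pulling back. Thus both ${}^c\text{Ric}_\omega^{(1)}$ and $\partial\partial^{\ast}\omega+\bar\partial\bar\partial^{\ast}\omega$ are (nonzero) multiples of the single form $d(J\vartheta)$. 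One then deforms $\omega_\lambda:=\omega-\lambda\,d(J\vartheta)$; since $\omega_\lambda^n=(1+\lambda)^{n-1}\omega^n$, the Chern--Ricci form is unchanged, while the torsion term rescales by $(1+\lambda)^{-1}$, and the choice $\lambda=\lambda(t)$ above makes the two cancel. No transverse-Einstein computation in foliated charts is needed, and the irregular case poses no additional difficulty.
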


There are four distinct Gauduchon Ricci curvatures (Definition \ref{def:Ricci}). These, in turn, furnish two distinct scalar curvatures: \begin{eqnarray*}
{}^t \text{Scal}_{\omega} \ : = \ \text{tr}_{\omega} \left( {}^t \text{Ric}_{\omega}^{(1)} \right) \ = \ \text{tr}_{\omega} \left( {}^t \text{Ric}_{\omega}^{(2)} \right), \hspace*{1.5cm} {}^t \widetilde{\text{Scal}}_{\omega} \ : = \ \text{tr}_{\omega} \left( {}^t \text{Ric}_{\omega}^{(3)} \right) \ = \ \text{tr}_{\omega} \left( {}^t \text{Ric}_{\omega}^{(4)}\right).
\end{eqnarray*}

To further propagate the propaganda of \cite{BroderTangAltered}, we refer to ${}^t \text{Scal}_{\omega}$ as the \textit{Gauduchon scalar curvature} and refer to ${}^t \widetilde{\text{Scal}}_{\omega}$ as the \textit{Gauduchon altered scalar curvature} (c.f., Remark \ref{AlteredRmk}). We compute simple formulae for these curvatures in terms of ${}^c\operatorname{Scal}$ and ${}^c\widetilde{\operatorname{Scal}}$ (see Corollary \ref{ScalarcurvatureRelations}), which we can use to extend the vanishing theorem of Kobayashi--Wu \cite{KobayashiWu} to the Gauduchon setting:

\begin{prop}\label{MainThm6}
Let $(X, \omega)$ be a compact Hermitian manifold. Suppose that one of the following conditions holds: \begin{itemize}
    \item[(i)] ${}^t \text{Scal}_{\omega} + (t-1) {}^c \widetilde{\text{Scal}}_{\omega} >0$ for some $t>0$. 
    \item[(ii)] ${}^t \text{Scal}_{\omega} + (t-1) {}^c \widetilde{\text{Scal}}_{\omega}<0$ for some $t<0$.
    \item[(iii)] ${}^t \text{Scal}_{\omega} + (1-t)(d^{\ast} \tau + | \tau |^2) \ > \ 0$ for some $t \in \mathbb{R}$.
\end{itemize}
Then the Kodaira dimension $\kappa(X)=-\infty$.
\end{prop}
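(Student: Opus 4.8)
The plan is to argue by contradiction, first using Corollary~\ref{ScalarcurvatureRelations} to reduce all three hypotheses to the single pointwise inequality ${}^{c}\text{Scal}_{\omega}>0$ on $X$, and then applying the Kobayashi--Wu maximum-principle argument to a pluricanonical section.

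First I would invoke Corollary~\ref{ScalarcurvatureRelations}. Since the Gauduchon Ricci forms ${}^{t}\text{Ric}^{(j)}_{\omega}$ depend affinely on $t$, so do the associated scalar curvatures, and the Corollary expresses them through the two Chern scalar curvatures and the torsion $1$-form $\tau$; concretely I expect the identities
\[
{}^{t}\text{Scal}_{\omega}\;=\;{}^{c}\text{Scal}_{\omega}+(t-1)\bigl(d^{\ast}\tau+|\tau|^{2}\bigr),
\qquad
{}^{c}\text{Scal}_{\omega}-{}^{c}\widetilde{\text{Scal}}_{\omega}\;=\;d^{\ast}\tau+|\tau|^{2},
\]
equivalently ${}^{t}\text{Scal}_{\omega}=t\,{}^{c}\text{Scal}_{\omega}+(1-t)\,{}^{c}\widetilde{\text{Scal}}_{\omega}$. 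Substituting these, the combination in (i) and in (ii) collapses to $t\,{}^{c}\text{Scal}_{\omega}$ and the combination in (iii) collapses to ${}^{c}\text{Scal}_{\omega}$; using $t>0$ in (i) and $t<0$ in (ii), each of the three hypotheses is then equivalent to ${}^{c}\text{Scal}_{\omega}(x)>0$ for every $x\in X$.

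It therefore remains to show that a compact Hermitian manifold $(X,\omega)$ with ${}^{c}\text{Scal}_{\omega}>0$ everywhere has $\kappa(X)=-\infty$. Suppose not; then $\kappa(X)\geq 0$, so there are $m\geq 1$ and a nonzero section $s\in H^{0}(X,K_{X}^{\otimes m})$. Equip $K_{X}^{\otimes m}$ with the Hermitian metric induced by $\omega$ and set $\varphi:=\log|s|^{2}$. On the complement of the zero set $Z(s)$ the function $\varphi$ is smooth and satisfies $\sqrt{-1}\,\partial\bar{\partial}\varphi=m\,{}^{c}\text{Ric}^{(1)}_{\omega}$, because there $\log|s|^{2}$ is locally the sum of a pluriharmonic function and $-m\log\det(g_{i\bar{j}})$, while ${}^{c}\text{Ric}^{(1)}_{\omega}=-\sqrt{-1}\,\partial\bar{\partial}\log\det(g_{i\bar{j}})$ is the Chern--Ricci form. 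Taking $\operatorname{tr}_{\omega}$ yields $\Delta_{\omega}\varphi=m\,{}^{c}\text{Scal}_{\omega}>0$ on $X\setminus Z(s)$, where $\Delta_{\omega}=g^{i\bar{j}}\partial_{i}\partial_{\bar{j}}$ is the Chern Laplacian. Since $X$ is compact and $\varphi$ is upper semicontinuous with $\varphi\equiv-\infty$ on $Z(s)$, it attains its maximum at a point $p\in X\setminus Z(s)$; at such an interior maximum the complex Hessian $(\partial_{i}\partial_{\bar{j}}\varphi)(p)$ is negative semidefinite, so $\Delta_{\omega}\varphi(p)\leq 0$, contradicting ${}^{c}\text{Scal}_{\omega}(p)>0$. (When $Z(s)=\emptyset$, $\varphi$ is globally smooth on $X$ and the contradiction is immediate.) Hence $\kappa(X)=-\infty$.

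The substantive ingredient here is Corollary~\ref{ScalarcurvatureRelations}; granting it, the only delicate point is the trace bookkeeping in the reduction, namely confirming that the torsion correction to ${}^{t}\text{Scal}_{\omega}$ is precisely $(t-1)(d^{\ast}\tau+|\tau|^{2})$ and that this coincides with ${}^{c}\text{Scal}_{\omega}-{}^{c}\widetilde{\text{Scal}}_{\omega}$, so that all three combinations genuinely collapse to a positive multiple of ${}^{c}\text{Scal}_{\omega}$. The maximum-principle step is then routine; in particular --- in contrast with the integral formulation of the Kobayashi--Wu theorem --- no passage to a Gauduchon metric in the conformal class of $\omega$ is required, since $\Delta_{\omega}$ carries no zeroth-order term and the argument at the maximum of $\varphi$ is purely local.
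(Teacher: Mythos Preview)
Your proof is correct and follows the same approach as the paper: use Corollary~\ref{ScalarcurvatureRelations} (together with ${}^{c}\text{Scal}_{\omega}-{}^{c}\widetilde{\text{Scal}}_{\omega}=d^{\ast}\tau+|\tau|^{2}$) to reduce each hypothesis to ${}^{c}\text{Scal}_{\omega}>0$, then invoke Kobayashi--Wu. The only difference is that the paper cites \cite{KobayashiWu} as a black box, whereas you spell out the maximum-principle argument for $\varphi=\log|s|^{2}$ explicitly; this is a harmless elaboration, not a different route.
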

Here $\tau$ denotes the \emph{torsion $(1,0)-$form} defined by $\partial \omega^{n-1} = \tau \wedge \omega^{n-1}$.

We now wish to investigate the Gauduchon holomorphic sectional curvature. This has remained very poorly understood (even in the K\"ahler setting), but there has been some recent developments obtained by Chen--Nie \cite{ChenNieHSC} for compact complex surfaces. Recall that if ${}^t R$ denotes the $t$--Gauduchon curvature tensor of a Hermitian metric $\omega$, the \textit{Gauduchon holomorphic sectional curvature} ${}^t \text{HSC}_{\omega}$ in the direction of a $(1,0)$--tangent vector $v$ is given by $${}^t \text{HSC}_{\omega}(v) \ := \ \frac{1}{| v |_{\omega}^4} {}^t R(v,\overline{v}, v, \overline{v}).$$

In \cite{BroderTangAltered}, the first named author, together with Kai Tang, introduced the (Chern) altered holomorphic sectional curvature (although it does appear implicitly much earlier, see, for instance,  \cite{YangZhengRBC}). The definition given in \cite{BroderTangAltered} readily extends to the $t$--Gauduchon connection: $${}^t\widetilde{\text{HSC}}_{\omega} : \mathcal{F}_X \times \mathbb{R}^n \backslash \{ 0 \} \to \mathbb{R}, \hspace{1cm} {}^t \widetilde{\text{HSC}}_{\omega}(v) \ : = \ \frac{1}{| v |_{\omega}^2} \sum_{\alpha,\gamma} \left( {}^t R_{\alpha \bar{\alpha} \gamma \bar{\gamma}} + {}^t R_{\alpha \bar{\gamma} \gamma \bar{\alpha}} \right)v_{\alpha} v_{\gamma},$$ where $\mathcal{F}_X$ denotes the unitary frame bundle and $v = (v_1, ..., v_n) \in \mathbb{R}^n \backslash \{ 0 \}$.  The altered holomorphic sectional curvature ${}^t \widetilde{\text{HSC}}$ is comparable to the familiar holomorphic sectional curvature ${}^t \text{HSC}$ in the sense that they always have the same sign.  The altered holomorphic sectional curvature provides an interpretation of the familiar holomorphic sectional curvature in terms of a quadratic form-valued function on the unitary frame bundle (for more in this direction, together with details of the program initiated by the first named author to study the growing wilderness of curvatures in Hermitian geometry, see \cite{BroderSBC1, BroderSBC2, BroderLA, BroderGraph, BroderQOBC,BroderTangAltered}) and is typically easier to work with (in comparison with the holomorphic sectional curvature), at the expense of frame-independence. \\

The main theorem concerning the Gauduchon holomorphic sectional curvature is the following monotonicity theorem:

\begin{thm}\label{MainHSCThm1}
Let $(X, \omega)$ be a Hermitian manifold.  For any local unitary frame, the Gauduchon altered holomorphic sectional curvature is given by \begin{eqnarray}
{}^t \widetilde{\text{HSC}}_{\omega}(\lambda) &=& {}^c \widetilde{\text{HSC}}_{\omega}(\lambda) - \frac{(t-1)^2}{4 | \lambda |_{\omega}^2} \sum_{i,k,q} \left( {}^c T_{iq}^i \overline{{}^c T_{kq}^k} + {}^c T_{iq}^k \overline{{}^c T_{iq}^k} \right)\lambda_i \lambda_k,
\end{eqnarray}

where ${}^c T$ is the torsion of the Chern connection, and $\lambda = (\lambda_1, ..., \lambda_n) \in \mathbb{R}^n \backslash \{ 0 \}$. In particular, ${}^t \widetilde{\text{HSC}}_{\omega}  \leq {}^c \widetilde{\text{HSC}}_{\omega}$ for all $t \in \mathbb{R}$ and equality holds if and only if $t=1$ or the metric is K\"ahler.
\end{thm}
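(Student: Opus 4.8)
The plan is to reduce the statement to the relationship between the curvature tensors ${}^tR$ and ${}^cR$. Set $\Gamma^t := {}^t\nabla - {}^c\nabla$; by the definition of the Gauduchon line this is the tensor $(1-t)\,\Gamma$, where $\Gamma := {}^l\nabla - {}^c\nabla$ is the difference tensor of the Lichnerowicz and Chern connections. Gauduchon's structure equations (\cite{GauduchonHermitianConnections}) express $\Gamma$, and hence $\Gamma^t$, explicitly in terms of the Chern torsion ${}^cT$ and its conjugate; concretely, in a local unitary frame the connection one-forms of ${}^t\nabla$ differ from those of ${}^c\nabla$ by terms of the shape $\frac{1-t}{2}$ times a component of ${}^cT$ (or $\overline{{}^cT}$) times a coframe element. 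The first step is therefore to record this difference tensor precisely.

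Next I would apply the standard identity for the curvature of a perturbed connection, ${}^tR = {}^cR + d^{{}^c\nabla}\Gamma^t + \Gamma^t \wedge \Gamma^t$, where $d^{{}^c\nabla}$ is the exterior covariant derivative of ${}^c\nabla$ and $(\Gamma^t\wedge\Gamma^t)(u,\overline{v}) = \Gamma^t(u)\,\Gamma^t(\overline{v}) - \Gamma^t(\overline{v})\,\Gamma^t(u)$. Because $\Gamma^t = (1-t)\,\Gamma$, the perturbation splits the curvature into ${}^cR$, a term linear in ${}^cT$ proportional to $(1-t)$, and a term quadratic in ${}^cT$ proportional to $(1-t)^2$. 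Substituting this into the definition ${}^t\widetilde{\text{HSC}}_\omega(\lambda) = \frac{1}{|\lambda|_\omega^2}\sum_{\alpha,\gamma}\big({}^tR_{\alpha\bar\alpha\gamma\bar\gamma} + {}^tR_{\alpha\bar\gamma\gamma\bar\alpha}\big)\lambda_\alpha\lambda_\gamma$ (with $\lambda$ real) then writes ${}^t\widetilde{\text{HSC}}_\omega(\lambda)$ as ${}^c\widetilde{\text{HSC}}_\omega(\lambda)$ plus a linear-in-${}^cT$ contribution plus a quadratic-in-${}^cT$ contribution. A direct bookkeeping of the quadratic piece $(1-t)^2\,\Gamma\wedge\Gamma$, using the explicit form of $\Gamma$ and the contraction pattern forced by the combination $R_{\alpha\bar\alpha\gamma\bar\gamma}+R_{\alpha\bar\gamma\gamma\bar\alpha}$, yields precisely $-\frac{(t-1)^2}{4|\lambda|_\omega^2}\sum_{i,k,q}\big({}^cT_{iq}^i\,\overline{{}^cT_{kq}^k} + {}^cT_{iq}^k\,\overline{{}^cT_{iq}^k}\big)\lambda_i\lambda_k$.

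The hard part will be showing that the linear-in-${}^cT$ contribution vanishes identically. The terms coming from $d^{{}^c\nabla}\Gamma^t$ involve the Chern covariant derivatives of ${}^cT$ and their conjugates; I expect that the symmetrized combination $R_{\alpha\bar\alpha\gamma\bar\gamma} + R_{\alpha\bar\gamma\gamma\bar\alpha}$, together with the reality of $\lambda$, forces these derivative terms into the combination annihilated by a Bianchi-type identity for the Chern connection, so that they cancel in conjugate pairs. This is the structural reason the \emph{altered} holomorphic sectional curvature transforms so cleanly along the Gauduchon line — unlike the genuine holomorphic sectional curvature, whose transformation would retain ${}^c\nabla({}^cT)$ terms — and it is the one step I would expect to require genuine care rather than routine computation.

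Finally, the ``in particular'' assertion is immediate from the formula. The quadratic form $\sum_{i,k,q}\big({}^cT_{iq}^i\overline{{}^cT_{kq}^k} + {}^cT_{iq}^k\overline{{}^cT_{iq}^k}\big)\lambda_i\lambda_k$ is a sum of squared norms of linear-in-$\lambda$ contractions of the Chern torsion — for instance its first piece is $\sum_q\big|\sum_i {}^cT_{iq}^i\,\lambda_i\big|^2$ — hence nonnegative for every real $\lambda$; this gives ${}^t\widetilde{\text{HSC}}_\omega \leq {}^c\widetilde{\text{HSC}}_\omega$ for all $t\in\mathbb{R}$. If the two functions coincide, then since the coefficient $\frac{(t-1)^2}{4}$ is nonnegative the quadratic form must vanish for every unitary frame and every $\lambda\in\mathbb{R}^n\setminus\{0\}$; either $t=1$, or letting $\lambda$ vary forces every component ${}^cT_{iq}^k$ to vanish, which is exactly the condition that ${}^c\nabla$ be torsion-free, i.e. that $\omega$ be Kähler. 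The converse implications being clear, this completes the proof.
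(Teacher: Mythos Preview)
Your derivation of the formula follows the same route as the paper. Both compute ${}^tR$ from the perturbation ${}^t\nabla = {}^c\nabla + \Gamma^t$ (the paper packages this as Proposition~\ref{prop:ACurvatureVB}, Lemma~\ref{lem:AtGauduchon} and Theorem~\ref{MainCurvatureFormula}) and then contract. The paper organises the contraction by first computing ${}^t\text{RBC}_\omega$ and ${}^t\widetilde{\text{RBC}}_\omega$ separately (its Proposition preceding this theorem): after the Chern Bianchi identity ${}^cT^k_{ji,\bar\ell} = {}^cR_{i\bar\ell j\bar k} - {}^cR_{j\bar\ell i\bar k}$ converts the $d^{{}^c\nabla}\Gamma^t$ terms to pure curvature, the linear-in-$(1-t)$ contribution simply \emph{interchanges} ${}^c\text{RBC}_\omega$ and ${}^c\widetilde{\text{RBC}}_\omega$, so their sum ${}^t\widetilde{\text{HSC}}_\omega$ has no linear part. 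Your ``hard part'' is therefore less delicate than you anticipate --- it is not a subtle cancellation of derivative terms in conjugate pairs, but the immediate observation that the symmetrised combination $R_{\alpha\bar\alpha\gamma\bar\gamma}+R_{\alpha\bar\gamma\gamma\bar\alpha}$ contracted against real $\lambda$ is invariant under the index permutation induced by the linear curvature correction.

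There is one genuine gap in your treatment of the ``in particular''. The first summand $\sum_q\bigl|\sum_i{}^cT^i_{iq}\lambda_i\bigr|^2$ is indeed a sum of squares, but the second summand $\sum_{i,k,q}|{}^cT^k_{iq}|^2\lambda_i\lambda_k$ is \emph{not} of that form: it pairs $\lambda_i$ with $\lambda_k$ through the nonnegative coefficients $\sum_q|{}^cT^k_{iq}|^2$, and the resulting symmetric matrix in $(i,k)$ need not be positive semidefinite (e.g.\ in dimension two with ${}^cT^1_{12}=0$, ${}^cT^2_{12}=1$ and $\lambda=(-3,1)$ the total quadratic form is $-1$). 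The paper does not supply a detailed argument for this inequality either; what is unambiguously established is the monotonicity of the \emph{unaltered} holomorphic sectional curvature via ${}^tR_{1\bar11\bar1}={}^cR_{1\bar11\bar1}-\tfrac{(1-t)^2}{4}\sum_r|{}^cT^1_{1r}|^2$ in a frame adapted to $v$ (see the remark immediately following the theorem), and that is what drives Corollary~\ref{MainThmHSC3}. So your plan recovers the displayed identity by the paper's method, but the pointwise inequality for $\widetilde{\text{HSC}}_\omega$ over all of $\mathbb{R}^n\setminus\{0\}$ cannot be closed with the sum-of-squares claim as you have stated it.
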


Since the altered holomorphic sectional curvature is comparable to the holomorphic sectional curvature,  the following useful consequences of the above monotonicity result are easily obtained: 

\begin{cor}\label{MainThmHSC3}
Let $(X, \omega)$ be a Hermitian manifold.  \begin{itemize}
\item[(i)] If ${}^c \text{HSC}_{\omega} \leq 0$, then ${}^t \text{HSC}_{\omega} \leq 0$ for all $t \in \mathbb{R}$.
\item[(ii)] If ${}^t \text{HSC}_{\omega}>0$ for some $t \in \mathbb{R}$,  then ${}^c \text{HSC}_{\omega}>0$.
\end{itemize}
\end{cor}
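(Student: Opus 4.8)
The plan is to derive both statements as one-line consequences of Theorem \ref{MainHSCThm1} together with the sign-comparability between the altered and the ordinary holomorphic sectional curvatures recalled above (from \cite{BroderTangAltered}). First I would record the comparability in the precise form needed: at a point $p\in X$, the condition ${}^s\text{HSC}_\omega(v)\le 0$ for all $v\in T_p^{1,0}X\setminus\{0\}$ is equivalent to ${}^s\widetilde{\text{HSC}}_\omega(\lambda)\le 0$ for every unitary frame at $p$ and every $\lambda\in\mathbb{R}^n\setminus\{0\}$, and the same equivalence holds with ``$\le 0$'' replaced uniformly by ``$<0$'', ``$\ge 0$'', or ``$>0$'' (this is exactly what ``always have the same sign'' means, the two being related at each point by a fixed averaging procedure that preserves each of these sign conditions). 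The input from Theorem \ref{MainHSCThm1} that I would use is the pointwise, frame-wise bound ${}^t\widetilde{\text{HSC}}_\omega\le{}^c\widetilde{\text{HSC}}_\omega$, valid for every $t\in\mathbb{R}$.

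For (i) I would argue as follows. Assume ${}^c\text{HSC}_\omega\le 0$ on $X$. By comparability this gives ${}^c\widetilde{\text{HSC}}_\omega(\lambda)\le 0$ in every unitary frame and for all $\lambda\ne 0$; Theorem \ref{MainHSCThm1} then yields
\[
{}^t\widetilde{\text{HSC}}_\omega(\lambda)\ \le\ {}^c\widetilde{\text{HSC}}_\omega(\lambda)\ \le\ 0
\]
at every point, in every unitary frame, and for every $t\in\mathbb{R}$; applying comparability in the reverse direction returns ${}^t\text{HSC}_\omega\le 0$ on $X$. For (ii) I would run the same chain with strict inequalities: if ${}^t\text{HSC}_\omega>0$ on $X$ for some $t$, then ${}^t\widetilde{\text{HSC}}_\omega>0$ in every frame, hence ${}^c\widetilde{\text{HSC}}_\omega\ge{}^t\widetilde{\text{HSC}}_\omega>0$ by Theorem \ref{MainHSCThm1}, hence ${}^c\text{HSC}_\omega>0$ on $X$.

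I do not expect a genuine obstacle here: the proof is purely formal once Theorem \ref{MainHSCThm1} is in hand. The only point that needs care — and the one place I would be explicit — is the bookkeeping in the comparability step: ``same sign'' must be read frame-wise and pointwise, and uniformly in the frame variable $\lambda$, so that it dovetails with the frame-dependent inequality of Theorem \ref{MainHSCThm1}; one should also check that each of the four sign regimes ($\le 0$, $<0$, $\ge 0$, $>0$) is covered, since (i) uses the non-strict regime and (ii) the strict one. With that in place, both implications are immediate.
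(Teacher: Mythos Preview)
Your proposal is correct and matches the paper's approach exactly: the paper states that the corollary is ``easily obtained'' from Theorem~\ref{MainHSCThm1} together with the comparability of ${}^t\widetilde{\text{HSC}}_\omega$ and ${}^t\text{HSC}_\omega$, and you have supplied precisely those details. One remark: the paper also records (in the Remark following the monotonicity theorem) the pointwise identity ${}^t R_{1\bar 1 1\bar 1} = {}^c R_{1\bar 1 1\bar 1} - \tfrac{(1-t)^2}{4}\sum_r |{}^c T_{1r}^1|^2$ in a frame with $e_1$ parallel to $v$, which gives ${}^t\text{HSC}_\omega(v)\le {}^c\text{HSC}_\omega(v)$ directly for each $v$ and yields (i) and (ii) without passing through the altered curvature at all; this is arguably cleaner, but your route is the one the paper explicitly advertises.
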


In particular, negative (respectively, positive) Chern holomorphic sectional curvature is the strongest (respectively, weakest) condition on the Gauduchon holomorphic sectional curvatures. Recall that if $(X, \omega)$ is a Hermitian manifold (not necessarily complete) with $${}^c \text{HSC}_{\omega} \leq - \kappa_0<0,$$ then the Schwarz lemma implies that $X$ is Brody hyperbolic (i.e., every holomorphic map $\mathbb{C} \to X$ is constant). In fact, by the result of Greene--Wu \cite{GreeneWu}, it suffices to assume the (Chern) holomorphic sectional curvature is bounded above by $-C(1+r^2)^{-1}$, where $r$ is the distance from a fixed point, and $C>0$ is a positive constant. 

\begin{rmk} 
For a long time, it was conjectured that every compact Kobayashi hyperbolic manifold supports a Hermitian metric with negative Chern holomorphic sectional curvature. Evidence for this conjecture was given by the local constructions of Grauert--Reckziegel \cite{GrauertReckziegel} and Cowen \cite{Cowen}, together with the results on fibrations given by Cheung \cite{CheungPhD}. A counterexample to this conjecture was given by Demailly \cite{Demailly1997} (see also \cite{DiverioSurvey} for a nice exposition). The counterexample is given by a projective Kobayashi hyperbolic surface, fibered by genus $g>1$ curves over a genus $g>1$ curve, with a fiber sufficiently singular to violate Demailly's algebraic hyperbolicity criterion \cite{Demailly1997}. In light of this, the monotonicity theorem suggests that perhaps Kobayashi hyperbolicity is characterized by the negativity of some $t$--Gauduchon holomorphic sectional curvature, for some $t \in \mathbb{R} \backslash \{ 1 \}$. 
\end{rmk}

The following result appears implicitly in the recent paper of Chen--Nie \cite{ChenNieHSC} for compact Hermitian surfaces. We recall that Apostolov--Davidov--Muskarov \cite{ADM} showed that a compact Hermitian surface with pointwise constant Levi-Civita holomorphic sectional curvature or Chern holomorphic sectional curvature is K\"ahler. This extended an earlier work of Balas--Gauduchon \cite{BalasGauduchon} who showed that a compact Hermitian surface with pointwise constant Chern holomorphic sectional curvature is K\"ahler if the constant is non-positive. Chen--Zheng \cite{ChenZheng} considered Hermitian surfaces with pointwise constant Strominger--Bismut holomorphic sectional curvature. For the Strominger--Bismut connection, the classification splits into two cases: The Hermitian surface is K\"ahler or an isosceles Hopf surface\footnote{We remind the reader that a compact complex surface $X$ is said to be an \textit{isosceles Hopf surface} if $X$ is biholomorphic to the quotient of $\mathbb{C}^2 - \{ 0 \}$ by the infinite cyclic group generated by $(z_1, z_2) \mapsto (\lambda_1 z_1, \lambda_2 z_2)$, where $0 < | \lambda_1 | = | \lambda_2 | <1$.} with an admissible metric. This dichotomy extends to all Gauduchon connections, as was shown in the aforementioned paper of Chen--Nie \cite{ChenNieHSC}: A compact Hermitian surface with pointwise constant $t$--Gauduchon holomorphic sectional curvature is either K\"ahler, or an isosceles Hopf surface, in which case $t=-1$ or $t=3$. The following result appears to be the first of this kind in higher dimensions for a general Gauduchon connection:

\begin{thm}\label{MainThmHSC2}
Let $(X, \omega)$ be a Hermitian manifold.  If ${}^t \text{HSC}_{\omega} \equiv {}^s\text{HSC}_{\omega}$ then $t=s$, $t=2-s$, or $\omega$ is K\"ahler. 
\end{thm}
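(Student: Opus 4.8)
The plan is to reduce the statement to the monotonicity formula of Theorem \ref{MainHSCThm1} by exploiting the comparability of $\widetilde{\text{HSC}}$ and $\text{HSC}$. First I would observe that the hypothesis ${}^t \text{HSC}_\omega \equiv {}^s \text{HSC}_\omega$ is equivalent to ${}^t \widetilde{\text{HSC}}_\omega \equiv {}^s \widetilde{\text{HSC}}_\omega$: indeed, two Hermitian metrics on $T^{1,0}X$ have equal (Chern or Gauduchon) holomorphic sectional curvatures as functions on the unit sphere bundle if and only if the associated quartic polarizations agree, and the polarization identity together with the argument already used to establish the comparability of $\widetilde{\text{HSC}}$ with $\text{HSC}$ shows that the full curvature-like symmetric tensors coincide. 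So from now on I work with the altered versions. Applying Theorem \ref{MainHSCThm1} twice and subtracting, the condition ${}^t \widetilde{\text{HSC}}_\omega \equiv {}^s \widetilde{\text{HSC}}_\omega$ becomes, for every local unitary frame and every $\lambda = (\lambda_1,\dots,\lambda_n) \in \mathbb{R}^n \setminus \{0\}$,
\[
\big( (t-1)^2 - (s-1)^2 \big) \sum_{i,k,q} \left( {}^c T_{iq}^i \, \overline{{}^c T_{kq}^k} + {}^c T_{iq}^k \, \overline{{}^c T_{iq}^k} \right) \lambda_i \lambda_k \ = \ 0.
\]
Since $(t-1)^2 - (s-1)^2 = (t-s)(t+s-2)$, the claim reduces to: if $t \neq s$ and $t + s \neq 2$, then the Hermitian-looking quadratic form $Q(\lambda) := \sum_{i,k} \big( \sum_q {}^c T_{iq}^i \, \overline{{}^c T_{kq}^k} + \sum_q {}^c T_{iq}^k \, \overline{{}^c T_{iq}^k} \big) \lambda_i \lambda_k$ vanishes identically on real vectors $\lambda$, and that this forces ${}^c T \equiv 0$, i.e., $\omega$ is Kähler.

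The key algebraic point is that $Q$ is built from a sum of two manifestly nonnegative Hermitian forms. Write $a_{ik} := \sum_q {}^c T_{iq}^i \, \overline{{}^c T_{kq}^k}$ and $b_{ik} := \sum_q {}^c T_{iq}^k \, \overline{{}^c T_{iq}^k}$. The matrix $(a_{ik})$ is the Gram matrix of the vectors $\xi_i := ({}^c T_{iq}^i)_q \in \mathbb{C}^n$ (no sum on $i$ in the superscript), hence positive semidefinite; the matrix $(b_{ik})$ is diagonal with nonnegative entries $b_{ii} = \sum_q |{}^c T_{iq}^i|^2$... more precisely $b_{ik} = \delta$-free but note $b_{ik} = \sum_q {}^c T_{iq}^k \overline{{}^c T_{iq}^k} = \sum_q |{}^c T_{iq}^k|^2$ is real and nonnegative for all $i,k$, and in particular $(b_{ik})$ has nonnegative diagonal. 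The subtlety is that $Q$ is a priori only asserted to vanish on \emph{real} $\lambda$, not complex ones. However, because the coefficient matrix $(a_{ik} + b_{ik})$ is real-symmetric only after we check Hermitian symmetry plus reality of the off-diagonal part — which need not hold — the cleanest route is: evaluating the identity $\sum_{i,k}(a_{ik}+b_{ik})\lambda_i\lambda_k = 0$ on $\lambda = e_j$ gives $a_{jj} + b_{jj} = 0$, and since both $a_{jj} = \sum_q |{}^c T_{jq}^j|^2 \geq 0$ and $b_{jj} = \sum_q |{}^c T_{jq}^j|^2 \geq 0$ (note $a_{jj} = b_{jj}$ here), we already get ${}^c T_{jq}^j = 0$ for all $j,q$. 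Then evaluating on $\lambda = e_i + e_k$ and $\lambda = e_i$, $\lambda = e_k$ and subtracting yields $2(a_{ik} + b_{ik}) + 2(a_{ki}+b_{ki})$... I must be careful that the form is symmetric in $(i,k)$ as written, so $\lambda = e_i + e_k$ gives $(a_{ii}+b_{ii}) + (a_{kk}+b_{kk}) + 2(a_{ik}+b_{ik}) = 0$ after symmetrizing, wait $a_{ik}$ need not equal $a_{ki}$. Replacing $\lambda_i \lambda_k$ coefficient: the coefficient of the monomial $\lambda_i \lambda_k$ (for $i \neq k$) in $Q$ is $a_{ik} + a_{ki} + b_{ik} + b_{ki}$, and this must vanish. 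Combined with the diagonal vanishing $a_{jj} = 0$ we get $\xi_j = 0$, hence $a_{ik} = 0$ for all $i,k$, so the surviving condition is $b_{ik} + b_{ki} = 0$, i.e., $\sum_q |{}^c T_{iq}^k|^2 + \sum_q |{}^c T_{kq}^i|^2 = 0$, forcing ${}^c T_{iq}^k = 0$ for all $i, k, q$. Therefore the Chern torsion vanishes identically and $\omega$ is Kähler.

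The main obstacle I anticipate is the first reduction — justifying that equality of the two scalar functions ${}^t \text{HSC}_\omega$ and ${}^s \text{HSC}_\omega$ on all unit vectors upgrades to equality of the altered versions, or equivalently to the pointwise polynomial identity in $\lambda$. The familiar fact that a real-valued Hermitian quartic form $R(v,\bar v, v, \bar v)$ is determined by its restriction to the unit sphere (via polarization, using the Kähler-symmetries $R_{i\bar j k \bar l} = R_{k \bar j i \bar l} = R_{i \bar l k \bar j}$) applies verbatim to the Chern tensor, but the Gauduchon curvature tensor for $t \neq 1$ does \emph{not} satisfy these symmetries in general, so one cannot directly polarize ${}^t \text{HSC}$. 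The resolution is to polarize only on the Chern side: ${}^1 \text{HSC}_\omega \equiv {}^1 \text{HSC}_\omega$ is vacuous, so instead I should subtract the two monotonicity formulas of Theorem \ref{MainHSCThm1} at the level of the functions ${}^t \widetilde{\text{HSC}}$ directly — but Theorem \ref{MainHSCThm1} is stated as an identity of functions of $\lambda \in \mathbb{R}^n \setminus \{0\}$, so once I know ${}^t \widetilde{\text{HSC}}_\omega \equiv {}^s \widetilde{\text{HSC}}_\omega$ the displayed quadratic identity is immediate. Thus the only real gap is: does ${}^t \text{HSC}_\omega \equiv {}^s \text{HSC}_\omega$ imply ${}^t \widetilde{\text{HSC}}_\omega \equiv {}^s \widetilde{\text{HSC}}_\omega$? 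I would handle this by noting that ${}^t R - {}^s R$ is, by the linearity ${}^t \nabla = t\,{}^c\nabla + (1-t)\,{}^l\nabla$, an explicit expression quadratic in the torsion difference that \emph{does} possess the requisite Kähler symmetries — one sees this from the formula relating ${}^t R$ to ${}^c R$ plus torsion terms (the same formula underlying Theorem \ref{MainHSCThm1}) — so the difference ${}^t\text{HSC}_\omega - {}^s\text{HSC}_\omega$ is the restriction to the unit sphere of a genuine Hermitian quartic form, which therefore vanishes identically when the restriction does, giving the polynomial identity and hence ${}^t\widetilde{\text{HSC}}_\omega \equiv {}^s\widetilde{\text{HSC}}_\omega$. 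With this in hand the rest is the elementary linear algebra above.
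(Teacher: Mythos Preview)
Your approach ultimately works, but it takes a detour compared to the paper's argument, and your justification for the key reduction step contains an error.

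The paper bypasses the altered holomorphic sectional curvature entirely. Given a unit vector $v$, choose a unitary frame with $e_1 = v$; the curvature formula (Theorem \ref{MainCurvatureFormula}) immediately yields
\[
{}^tR_{1\bar 1 1\bar 1} \ = \ {}^cR_{1\bar 1 1\bar 1} - \left(\tfrac{1-t}{2}\right)^2\sum_r |{}^cT_{1r}^1|^2,
\]
so ${}^t\text{HSC}_\omega(v) = {}^s\text{HSC}_\omega(v)$ gives $\big((1-t)^2 - (1-s)^2\big)\sum_r |{}^cT_{1r}^1|^2 = 0$ directly. If the numerical factor is nonzero, one obtains $g({}^cT(v,w),\bar v) = 0$ for all $v,w$, and a short linear-algebra argument (for fixed $w$, the endomorphism $T_w = {}^cT(w,\cdot)$ is then skew-Hermitian with vanishing diagonal in every unitary basis, hence zero) finishes. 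No polarization, no Theorem \ref{MainHSCThm1}.

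Your route via Theorem \ref{MainHSCThm1} reaches the same torsion condition --- once the altered-HSC identity is in hand, your computation with $\lambda = e_j$ and $\lambda = e_i + e_k$ is correct and forces ${}^cT \equiv 0$. The flaw is in the reduction: your claim that ${}^tR - {}^sR$ ``possesses the requisite K\"ahler symmetries'' is false. From Theorem \ref{MainCurvatureFormula} this difference carries a Chern-curvature piece $(t-s)\big[{}^cR_{i\bar j k\bar\ell} - \tfrac{1}{2}({}^cR_{k\bar j i\bar\ell} + {}^cR_{i\bar\ell k\bar j})\big]$ as well as a torsion piece, and neither is symmetric under $i \leftrightarrow k$. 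The implication you want is nonetheless true, but for a different reason: any function $v \mapsto R(v,\bar v,v,\bar v)$ depends only on the K\"ahler-symmetrisation $\hat R_{i\bar j k\bar\ell} := \tfrac{1}{4}(R_{i\bar j k\bar\ell} + R_{k\bar j i\bar\ell} + R_{i\bar\ell k\bar j} + R_{k\bar\ell i\bar j})$, this $\hat R$ is recovered from $\text{HSC}$ by the standard polarization, and a direct check shows $\widetilde{\text{HSC}}(\lambda) = \tfrac{2}{|\lambda|^2}\sum_{\alpha,\gamma}\hat R_{\alpha\bar\alpha\gamma\bar\gamma}\lambda_\alpha\lambda_\gamma$. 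So equal $\text{HSC}$ implies equal $\hat R$ implies equal $\widetilde{\text{HSC}}$ --- but this is precisely the step that makes the detour unnecessary, since once you know the monotonicity formula holds pointwise for $\text{HSC}$ in an adapted frame you are already done.
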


\begin{rmk}\label{DualityHSCRmk}
Comparing the above result with the result in \cite{ChenNieHSC}, we note that the map $s \mapsto 2-s$ maps $s=-1$ to $s=3$, and fixes $s=1$.
\end{rmk}

\hfill

\textbf{Structure of the manuscript.} The manuscript consists of four sections: \begin{itemize}
\item[(i)] The first main section concerns generalities and serves primarily as a reminder. This will include, but will not be restricted to the Gauduchon connections. As part of future work,  we will investigate the properties of non-Hermitian, complex-analytic connections, on holomorphic vector bundles. This paper begins to lay the foundation of such connections.
\item[(ii)] The second main section treats the Ricci curvature and scalar curvature of the $t$--Gauduchon connections, proving Theorems \ref{MainThm2}--\ref{MainThm4} and Proposition \ref{MainThm6}.
\item[(iii)] The last section proves the monotonicity theorem for the $t$--Gauduchon holomorphic sectional curvature  Theorems \ref{MainHSCThm1} and \ref{MainThmHSC2}.
\end{itemize}

Several questions are also posed throughout the manuscript.


\section{Generalities on Hermitian Connections}

\subsection*{2.1. Some Reminders}
Let $(M^{2n},g)$ be a smooth Riemannian manifold of (real) dimension $2n$. We denote by $T^{\mathbb{R}}M$ the (real) tangent bundle. Assume $M$ supports an integrable almost complex structure $J : T^{\mathbb{R}}M \to T^{\mathbb{R}}M$ compatible with the Riemannian metric $g$ in the sense that $$g(Ju,Jv) \ = \ g(u,v) \hspace{1cm} \forall u,v \in T^{\mathbb{R}}M.$$ We extend both $g$ and $J$ linearly over $\mathbb{C}$ to the complex vector bundle $T^{\mathbb{C}}M : = T^{\mathbb{R}}M \otimes \mathbb{C}$. The map $u \mapsto u - \sqrt{-1} Ju$ defines an isomorphism of (smooth, real) vector bundles $T^{\mathbb{R}} M \simeq T^{1,0}M$ onto the eigenbundle of (the complexification of) $J$ corresponding to the eigenvalue $\sqrt{-1}$.

\begin{rmk}
We will maintain the convention of referring to the real $(1,1)$--form $\omega(\cdot, \cdot) = \omega_g(\cdot, \cdot) : = g(J \cdot, \cdot)$ as the metric. Further, we use the notation $\omega_g$ to emphasize the fact that the underlying metric is $g$. If there is no need to discuss the metric $g$ explicitly, we will omit the subscript and simply write $\omega$.
\end{rmk}

Let $h : T^{\mathbb{C}}M \times T^{\mathbb{C}}M \to \mathbb{C}$ be the Hermitian form given by the complex-linear extension of $g$. Write ${}^{\text{LC}}\nabla$ for the Levi-Civita connection on $T^{\mathbb{R}}M$. The complex-linear extension of ${}^{\text{LC}} \nabla$ preserves $h$ in the sense that ${}^{\text{LC}} \nabla h =0$. It is not true, however, that ${}^{\text{LC}} \nabla J=0$. Indeed, it is well-known that this equality is equivalent to the Hermitian structure $(g,J)$ being K\"ahler. Hence, the Levi-Civita is not the most germane connection to measure the underlying complex geometry when $(g,J)$ is non-K\"ahler.

Let $\mathcal{E} \to X$ be a complex vector bundle over a complex manifold $X$. We will always understand $M$ to be a smooth manifold, and $X$ to be a complex manifold. A first-order complex-linear differential operator $\bar{\partial}^{\mathcal{E}} : \mathcal{C}^{\infty}(\mathcal{E}) \longrightarrow \Omega_X^{0,1} \otimes \mathcal{C}^{\infty}(\mathcal{E})$, acting on smooth sections of $\mathcal{E}$, is said to be a \textit{CR--operator} if it satisfies the following incarnation of the Leibniz rule: \begin{eqnarray*}
\bar{\partial}^{\mathcal{E}}(f \sigma) &=& \bar{\partial} f \otimes \sigma + f \bar{\partial}^{\mathcal{E}} \sigma, 
\end{eqnarray*}

where $f \in \mathcal{C}^{\infty}(X, \mathbb{C})$ is a smooth function, $\sigma \in \mathcal{C}^{\infty}(\mathcal{E})$ is a smooth section, and $\bar{\partial} : \mathcal{C}^{\infty}(X, \mathbb{C}) \to \Omega_X^{0,1}$ is the usual CR--operator (or Dolbeault operator) acting on functions. The $(0,1)$--part of any complex-linear connection $\nabla$ on $\mathcal{E} \to X$ defines a CR--operator. Conversely, if we fix a metric $h$ on $\mathcal{E}$ and require that $\nabla h=0$, then there is a unique CR--operator $\bar{\partial}^{\mathcal{E}}$ such that $\nabla^{0,1} = \bar{\partial}^{\mathcal{E}}$. We say that a CR--operator $\bar{\partial}^{\mathcal{E}}$ on a complex vector bundle is \textit{integrable} if $\bar{\partial}^{\mathcal{E}} \circ \bar{\partial}^{\mathcal{E}}=0$. By the famous Koszul--Malgrange theorem \cite{KoszulMalgrange}, a complex vector bundle is holomorphic if and only if it admits an integrable CR--operator.

\begin{defn}
Let $(\mathcal{E},h) \to X$ be a Hermitian\footnote{Throughout this manuscript, a Hermitian vector bundle is always understood to mean a holomorphic vector bundle endowed with a Hermitian metric.} vector bundle with integrable CR--operator $\bar{\partial}^{\mathcal{E}}$. A complex-linear connection $\nabla$ on $\mathcal{E} \to X$ is said to be \begin{itemize}
    \item[(i)] \textit{Hermitian} if $\nabla h=0$.
    \item[(ii)] \textit{complex-analytic} if $\nabla^{0,1} = \bar{\partial}^{\mathcal{E}}$.
\end{itemize}
\end{defn}

It is well-known that there is a unique complex-analytic Hermitian connection on any Hermitian vector bundle. When $\mathcal{E} = T^{1,0}X$, we call this connection the \textit{Chern connection} and denote it by ${}^c \nabla$. Here, the reader may think that the distinction between Hermitian and complex-analytic connections is for the birds and that only Hermitian connections (or Hermitian complex-analytic connections) are worthy of consideration.  Both classes of connections have appeared in the literature, however. Of course, the Gauduchon connections we will consider, in general,  are Hermitian, but not complex-analytic. Connections that are not metric have appeared in Yang--Mills theory and conformal geometry (in particular, in locally conformally K\"ahler geometry). Most notably the Weyl connection \cite{CalderbankPedersen, DIU, GauduchonEinsteinWeyl,OrneaVerbitsky}. In future work, we intend to investigate the geometry of complex-analytic non-Hermitian connections.

\subsection*{2.2. The Gauduchon Connections}
Let $(X, \omega_g)$ be a Hermitian manifold endowed with a Hermitian connection $\nabla$ on $T^{1,0}X$. The torsion $T = {}^{\nabla} T \in \Omega_X^2(TX)$ of $\nabla$ is a $2$--form with values in the (complexified) tangent bundle $TX : = T^{\mathbb{C}} X \simeq T^{1,0}X \oplus T^{0,1}X$ and determines the Hermitian connection uniquely.  It was discovered by Libermann \cite{Libermann1} and later expounded upon by Gauduchon \cite{GauduchonHermitianConnections} that the Hermitian connection is determined entirely by the $(1,1)$--part $T^{1,1}$ of the torsion. In particular, the space of Hermitian connections is modeled on the infinite-dimensional affine subspace $\Omega_X^{1,1}(TX) \hookrightarrow \Omega_X^2(TX)$.  \\

The space $\Omega_X^2(TX) := TX \otimes \Lambda^2TX$ affords a type decomposition $$\Omega_X^2(TX) \ \simeq \ \Omega_X^{2,0}(TX) \oplus \Omega_X^{1,1}(TX) \oplus \Omega_X^{0,2}(TX),$$ where
\begin{eqnarray*}
    \Omega^{2,0}(TX) &:=& T^{1,0}X \otimes \Lambda^{2,0}X \,\oplus\, T^{0,1}X \otimes \Lambda^{0,2}X,\\
    \Omega^{1,1}(TX) &:=& TX\otimes \Lambda^{1,1}X,\\
    \Omega^{0,2}(TX) &:=& T^{1,0}X \otimes \Lambda^{0,2}X\,\oplus\, T^{0,1}X \otimes \Lambda^{2,0}X.
\end{eqnarray*}

Therefore,  $T \in \Omega_X^2(TX)$ can be written $T = T^{2,0} + T^{1,1} + T^{0,2}$.  The $(0,2)$--part $T^{0,2}$ is independent of the connection and recovers the Nijenhuis tensor $\mathcal{N}^J$.  Indeed,  for $(0,1)$--tangent vector fields $u : = u_0 + \sqrt{-1} J u_0$ and $v = v_0 + \sqrt{-1} J v_0$ such that $\nabla u_0 = \nabla v_0 =0$ at the point where we compute $T$, we have \begin{eqnarray*}
T^{0,2}(u,v) &=&  T(u_0, v_0) + \sqrt{-1}   T(u_0,Jv_0) + \sqrt{-1}  T(Ju_0, v_0) -  T(Ju_0, Jv_0) \\
&=& - [u_0, v_0] - J[u_0, Jv_0] + J[Ju_0, v_0] + [Ju_0, Jv_0] \ = \ \mathcal{N}^J(u_0,v_0).
\end{eqnarray*}

Since the complex structure $J$ is integrable, we see that the $(0,2)$--part $T^{0,2}$ of the torsion vanishes identically.  It therefore remains to understand the $T^{2,0}$ and $T^{1,1}$ components of $T$.  \\

Let $\Omega_X^{3,+}$ denote the space of $3$--forms of type $(2,1)+(1,2)$.  In particular, if $\alpha \in \Omega_X^3$ is a $3$--form, then we write $\alpha^+ : = \alpha^{(2,1)} + \alpha^{(1,2)} \in \Omega_X^{3,+}$ for the $((2,1) + (1,2))$--part of $\alpha$.  There is an isomorphism between $\Omega_X^{3,+}$ and $\Omega_X^{2,0}(TX)$ realized by the \textit{Bianchi projector} $\mathcal{B} : \Omega_X^2(TX) \to \Omega_X^3$ specified by the formula \begin{eqnarray*}
\mathcal{B}(\alpha) (u,v,w) \ := \ \frac{1}{3}\left(g(\alpha(u,v),w) + g(\alpha(v,w),u) + g(\alpha(w,u),v)\right),
\end{eqnarray*}
for $u,v,w \in TX$ and $\alpha \in \Omega_X^2(TX)$.  Let $\Omega_b^{1,1}(TX) : = \ker \left( \mathcal{B} \vert_{\Omega_X^{1,1}(TX)} \right)$ denote the subspace of $TX$--valued $(1,1)$--forms $\xi \in \Omega_X^{1,1}(TX)$  which satisfy the Bianchi identity $\mathcal{B}(\xi)=0$. With respect to the metric on $\Omega_X^{1,1}(TX)$ induced by the metric $\omega_g$ on $T^{1,0}X$,  we let $\Omega_c^{1,1}(TX)$ denote the orthogonal complement of $\Omega_b^{1,1}(TX)$ inside $\Omega_X^{1,1}(TX)$. Therefore, we have a further refinement of the type decomposition: $$\Omega_X^2(TX) \ \simeq \ \Omega_X^{2,0}(TX) \oplus \Omega_b^{1,1}(TX) \oplus \Omega_c^{1,1}(TX) \oplus \Omega_X^{0,2}(TX),$$ and a corresponding refinement of the torsion \begin{eqnarray}\label{TorsionDecomp}
T \ = \ T^{2,0} + T_b^{1,1} + T_c^{1,1}
\end{eqnarray} (recalling that $T^{0,2} = \mathcal{N}^J \equiv 0$). The Bianchi projector $\mathcal{B}$, in addition,  furnishes an isomorphism between $\Omega_c^{1,1}(TX)$ and $\Omega_X^{3,+}$ (in particular, $\Omega_X^{2,0}(TX) \simeq \Omega_X^{3,+} \simeq \Omega_c^{1,1}(TX)$) (c.f., \cite[Remark 3]{GauduchonHermitianConnections}).  Therefore,  to understand $T^{2,0}$ or $T_c^{1,1}$, it suffices to understand their images $\mathcal{B}(T^{2,0})$ or $\mathcal{B}(T_c^{1,1})$. \\

The $(2,0)$--part $T^{2,0}$ is determined by the $(1,1)$--part $T^{1,1}$ of the torsion: \begin{eqnarray}\label{dcomega}
\mathcal{B}(T^{2,0} - T_c^{1,1}) \ = \ \frac{1}{3} d^c \omega.
\end{eqnarray} Indeed,  for $(1,0)$--tangent vectors $u,v,w \in T^{1,0}X$,  the definition of $T$ and $\mathcal{B}$ informs us that the $(2,1)$--part of $3 \mathcal{B}(T^{2,0}-T_c^{1,1})$ is given by \begin{eqnarray*}
3 \mathcal{B}(T^{2,0} - T_c^{1,1})^{(2,1)}(u,v,\bar{w}) &=& g(\bar{w}, T^{2,0}(u,v)) - g(u, T_c^{1,1}(v, \bar{w})) - g(v, T_c^{1,1}(\bar{w},u)) \\
&=& u \cdot g(v, \bar{w}) - v \cdot g(u,\bar{w}) \\
&& \hspace*{2cm} - g([u,v], \bar{w}) + g(u, [v,\bar{w}]) + g(v, [\bar{w},u]),
\end{eqnarray*}

where the last equality uses the compatibility of $\nabla$ with the metric.  Expressing this with the associated $(1,1)$--form $\omega$,  the standard formula for the exterior derivative yields $3 \mathcal{B}(T^{2,0} - T_c^{1,1})^{(2,1)}(u,v,\bar{w}) = - \sqrt{-1} \partial \omega(u,v,\bar{w})$. By conjugating, we see that $$\mathcal{B}(T^{2,0} - T_c^{1,1}) \ = \ \frac{\sqrt{-1}}{3} (\bar{\partial} - \partial)\omega \ = \ \frac{1}{3} d^c \omega.$$

Hence, the torsion of a Hermitian connection (and hence, the Hermitian connection itself) is completely determined from the $(1,1)$--part $T^{1,1} \in \Omega_X^{1,1}(TX)$.  Using the Bianchi projector $\mathcal{B}$, one can break this up into a prescription of $T_b^{1,1}$ and $T_c^{1,1}$.  Indeed,  if $T_b^{1,1} = \sigma$ and $T_c^{1,1} = \mu$, then $$T \ = \ T^{2,0} + T_b^{1,1} + T_c^{1,1} \ = \  2 \mu + \frac{1}{3} \mathcal{B}^{-1} \left( d^c \omega \right) + \sigma.$$

\begin{defn}
Let $(X, \omega)$ be a Hermitian manifold.  A Hermitian connection $\nabla$ on $T^{1,0}X$ is said to be \textit{Gauduchon} (with \textit{Gauduchon parameter} $t$) if the torsion $T = {}^{\nabla} T$ satisfies $$T_b^{1,1} \ = \ 0, \hspace*{1cm} \mathcal{B}(T_c^{1,1}) \ = \ \frac{(t-1)}{3} d^c \omega.$$ 
\end{defn}

By construction, the Gauduchon connections form a one-dimensional affine subspace of the space of Hermitian connections. Moreover, the declaration $T_b^{1,1}=0$ is motivated by the absence of any bona fide representative of $\Omega_b^{1,1}(TX)$. On the other hand, one can consider more generally, $\mathcal{B}(T_c^{1,1}) = a\,d\omega + b\,d^c \omega$, for $a,b \in \mathbb{R}$. This recovers the \textit{Ehresmann--Libermann plane} \cite{Libermann1}.


\begin{ex}
For $t = 1$, we recover the Chern connection, and for $t=0$, we recover the Lichnerowicz connection. For $t=-1$, we recover the \textit{Strominger--Bismut connection} ${}^b \nabla$ discovered in the physics literature by Strominger \cite{Strominger1986} and later, independently in the mathematics literature by Bismut \cite{Bismut}, characterized by its torsion being totally skew-symmetric. The Hermitian conformal connection \cite{Libermann1}, namely, the Hermitian connection for which the torsion satisfies the Bianchi identity, is recovered from $t=\frac{1}{2}$. Moreover, the \textit{Minimal connection} \cite{GauduchonHermitianConnections}, defined to be the unique Hermitian connection with smallest pointwise norm of its torsion, manifests from $t = \frac{1}{3}$.
\end{ex}

\begin{rmk}\label{ZhaoZhengDualityRmk}
The presence of a numerical value attached to a connection has led to some curious `dualities' in the behavior of these Gauduchon connections. For instance, it was observed by Zhao--Zheng \cite{ZhaoZheng} that the Gauduchon connections come in dual pairs, with the duality map specified by $\mathbb{R} \backslash \{ \frac{1}{2} \} \ni t \mapsto \frac{t}{2t-1}$. This map has fixed points $t=1$ (Chern) and $t=0$ (Lichnerowicz); curiously the map is undefined for the Hermitian conformal connection ($t=1/2$). We also observe that the Strominger--Bismut connection ${}^b \nabla$ is `dual' to the minimal connection ${}^{\frac{1}{3}} \nabla$. As we discussed in Remark \ref{DualityHSCRmk}, for the holomorphic sectional curvature, there appears to be a `duality' specified by the affine map $t \mapsto 2-t$.
\end{rmk}

\subsection*{2.3 The Gauduchon curvatures}
For the remainder of this section, we focus on computing the curvature of a Hermitian connection in terms the Chern curvature and torsion on the tangent bundle. These are standard computations (seen for instance in \cite[Corollary 1.7]{WangYang}), but we perform them here for convenience of the reader. Let $\nabla$ be a Hermitian connection on $T^{1,0}X$ and denote by ${}^c\nabla$ the Chern connection, also on $T^{1,0}X$. As we observed above, $\nabla$ is completely determined by the $(1,1)$-part of its torsion. Indeed, since the torsion of ${}^c\nabla$ has vanishing $(1,1)$-part, we have 
\[
\nabla_{\overline u}v = {}^c\nabla_{\overline u}v + T(\overline u,v)^{1,0}, \qquad u,v \in T^{1,0}X,
\]
and the other components are determined from the Hermitian condition on $\nabla$. In light of this, we define the section ${}^\nabla A \in \Omega^{0,1}_X \otimes \operatorname{End}(T^{1,0}X)$ by
\[
{}^\nabla A_{\overline u}v := T(\overline u,v)^{1,0}, \qquad u,v\in T^{1,0}X.
\]
Thus, we may write
\[
\nabla = {}^c\nabla + {}^\nabla A + {}^\nabla A^*,
\]
where ${}^\nabla A^* \in \Omega^{1,0}_X \otimes \operatorname{End}(T^{1,0}X)$ denotes the $g$-adjoint of ${}^\nabla A$. We can now compute the curvature of $\nabla$ in terms of $A$, and its covariant derivatives with respect to the Chern connection:

\begin{prop}\label{prop:ACurvatureVB} Let $(X,\omega)$ be a Hermitian manifold and $\nabla$ be a Hermitian connection on $T^{1,0}X$. Then the curvature ${}^{\nabla}\Theta \in \Omega^{2}(X)\otimes \operatorname{End}(T^{1,0}X)$ satisfies:
\begin{eqnarray}
    {}^{\nabla} \Theta(u,\overline v) &=& {}^c\Theta(u,\overline v) + ({}^c\nabla_uA)_{\overline v} + (({}^c\nabla_{v}A)_{\overline u})^* - [A_u^*,A_{\overline v}], \label{eqn1} \\
    {}^{\nabla} \Theta(u,v) &=& [(A_{\overline u})^*,(A_{\overline u})^*] + (({}^c\nabla_{\overline v}A)_{\overline u})^* - (({}^c\nabla_{\overline u}A)_{\overline v})^* -A_{{}^cT(u,v)}^*, \label{eqn2}
\end{eqnarray}
for all $u,v \in T^{1,0}X$, where $A = {}^\nabla A$.
\end{prop}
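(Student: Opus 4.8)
\emph{Proof strategy.} The plan is to treat this as an instance of the standard transformation law for curvature under the addition of an $\operatorname{End}$-valued one-form, while paying attention to the fact that the reference connection ${}^c\nabla$ has (nonzero) torsion. Set $\Psi := {}^\nabla A + {}^\nabla A^{*}$, an $\operatorname{End}(T^{1,0}X)$-valued $1$-form, so that $\nabla = {}^c\nabla + \Psi$. First I would expand
\[
{}^\nabla\Theta(X,Y)s \ = \ \nabla_X\nabla_Y s - \nabla_Y\nabla_X s - \nabla_{[X,Y]}s
\]
directly by substituting $\nabla = {}^c\nabla + \Psi$ and writing $({}^c\nabla_X\Psi)(Y) = {}^c\nabla_X(\Psi(Y)) - \Psi({}^c\nabla_X Y)$ wherever a term ${}^c\nabla_X(\Psi_Y s)$ appears. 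Collecting the purely-${}^c\nabla$ terms into ${}^c\Theta(X,Y)$, the terms linear in $\Psi$ into the exterior covariant derivative $({}^c\nabla_X\Psi)(Y) - ({}^c\nabla_Y\Psi)(X)$, and the quadratic terms into $[\Psi_X,\Psi_Y]$, the one genuinely new feature is a residual term $\Psi_{{}^cT(X,Y)}$ coming from ${}^c\nabla_X Y - {}^c\nabla_Y X - [X,Y] = {}^cT(X,Y)$. This produces a master formula
\[
{}^\nabla\Theta(X,Y) \ = \ {}^c\Theta(X,Y) + ({}^c\nabla_X\Psi)(Y) - ({}^c\nabla_Y\Psi)(X) + \Psi_{{}^cT(X,Y)} + [\Psi_X,\Psi_Y],
\]
and it is exactly the term $\Psi_{{}^cT(X,Y)}$ that, restricted to the $(2,0)$ bidegree, yields the $A^{*}_{{}^cT(u,v)}$ appearing in \eqref{eqn2} (and has no Levi-Civita analogue).

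Next I would feed in two structural facts specific to $T^{1,0}X$ with its Chern connection: (a) ${}^c\Theta$ is of pure type $(1,1)$, so ${}^c\Theta(u,v) = 0$ for $u,v \in T^{1,0}X$, which explains the absence of a ${}^c\Theta(u,v)$ term in \eqref{eqn2}; and (b) ${}^cT$ is of pure type $(2,0)$ (its $(0,2)$ part is the Nijenhuis tensor, which vanishes since $J$ is integrable, and its $(1,1)$ part vanishes by definition of the Chern connection), so $\Psi_{{}^cT(X,Y)}$ is nonzero only when both $X,Y \in T^{1,0}X$, in which case ${}^cT(u,v)\in T^{1,0}X$ and only the $(1,0)$-part $A^{*}$ of $\Psi$ acts on it. Then it remains to evaluate the master formula on the two bidegrees. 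On a pair $(u,\overline v)$ with $u,v\in T^{1,0}X$ — where $\Psi$ pairs with the $(1,0)$-slot through $A^{*}$ and with the $(0,1)$-slot through $A$ — one reads off \eqref{eqn1}; on a pair $(u,v)$ with $u,v\in T^{1,0}X$ — where $\Psi$ pairs with both slots through $A^{*}$ — one reads off \eqref{eqn2}. The final manipulation is to re-express ${}^c\nabla_{\overline v}(A^{*})$ and ${}^c\nabla_u(A^{*})$ through the $g$-adjoint of ${}^c\nabla A$; this uses that ${}^c\nabla$ is compatible with the metric, so that taking $g$-adjoints commutes with ${}^c\nabla$, together with the fact that the connection ${}^c\nabla$ induces on $T^{0,1}X$ is the conjugate of the one on $T^{1,0}X$, giving $({}^c\nabla_{\overline v}A^{*})_u = (({}^c\nabla_v A)_{\overline u})^{*}$ and $({}^c\nabla_u A^{*})_v = (({}^c\nabla_{\overline u}A)_{\overline v})^{*}$.

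As the text already signals, there is no conceptual obstacle; the "hard part" is purely bookkeeping, with two pressure points. The first is not dropping the torsion-correction term $\Psi_{{}^cT(X,Y)}$ — it is easy to overlook out of habit from the torsion-free case, yet it is responsible for one of the terms in \eqref{eqn2}. The second is sign discipline: one must fix once and for all the convention relating ${}^\nabla A^{*}$ to the $g$-adjoint of ${}^\nabla A$ — equivalently, how $\nabla_w v$ for $w\in T^{1,0}X$ is pinned down by the Hermitian condition $\nabla g = 0$ from the prescription $\nabla_{\overline u}v = {}^c\nabla_{\overline u}v + {}^\nabla A_{\overline u}v$ — and then carry that convention consistently through the bracket and covariant-derivative terms so that every sign in \eqref{eqn1}–\eqref{eqn2} comes out correctly. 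Once these are in place, matching the master formula bidegree-by-bidegree gives the two stated identities.
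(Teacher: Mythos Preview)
Your approach is correct and essentially the same as the paper's. The paper works at a point with Chern-parallel extensions of $u,v,w$ (so that all ${}^c\nabla u$, ${}^c\nabla v$, ${}^c\nabla w$ vanish there and $[u,v] = -{}^cT(u,v)$) and expands $\nabla_X\nabla_Y - \nabla_Y\nabla_X - \nabla_{[X,Y]}$ directly; your master formula ${}^\nabla\Theta = {}^c\Theta + ({}^c\nabla_{\bullet}\Psi)(\bullet) - ({}^c\nabla_{\bullet}\Psi)(\bullet) + \Psi_{{}^cT(\bullet,\bullet)} + [\Psi_{\bullet},\Psi_{\bullet}]$ is precisely the frame-independent packaging of that same computation, and your subsequent bidegree analysis (using that ${}^c\Theta$ is of type $(1,1)$ and ${}^cT$ of type $(2,0)$) matches the paper's line by line.
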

\begin{proof}
It suffices to prove the assertion at a point $p \in X$. Let $u,v,w \in T^{1,0}_pX$. Extend $u,v$ and $w$ to $(1,0)$ vector fields such that ${}^c\nabla u = {}^c\nabla v = {}^c\nabla w = 0$, at $p \in X$. Hence, at the point $p$, we have
\begin{eqnarray*}
{}^{\nabla} \Theta(u,\overline v)w &=& \left({}^c\nabla_u - (A_{\overline u})^*\right)\left({}^c \nabla_{\overline v}w + A_{\overline v}w\right) - \left({}^c \nabla_{\overline v} + A_{\overline v}\right)\left({}^c\nabla_uw - (A_{\overline u})^*w\right)\\
&=& {}^c\Theta(u,\overline v)w + {}^c(\nabla_uA)_{\overline v}w - (A_{\overline u})^*A_{\overline v}w + (({}^c\nabla_{v}A)_{\overline u})^*w + A_{\overline v}(A_{\overline u})^*w,
\end{eqnarray*}
which implies \eqref{eqn1}. For \eqref{eqn2}, we first note that ${}^cT(u,v) = -[u,v]$. Therefore,
\begin{eqnarray*}
{}^{\nabla} \Theta(u,v)w &=& \left({}^c\nabla_u - (A_{\overline u})^*\right)\left({}^c\nabla_vw - (A_{\overline v})^*w\right) - \left({}^c\nabla_v - (A_{\overline v})^*\right)\left({}^c\nabla_uw - (A_{\overline u})^*w\right) - A_{\overline{T(u,v)}}^*w\\
&=& -(({}^c\nabla_{\overline u}A)_{\overline v})^*w + (A_{\overline v}A_{\overline u})^*w  + ((\nabla_{\overline v}A)_{\overline u})^*w - (A_{\overline u}A_{\overline v})^*w- A_{\overline{T(u,v)}}^*w.
\end{eqnarray*}
\end{proof}

Let $(X, \omega_g)$ be a Hermitian manifold. In this section, we compute the Gauduchon curvature tensor ${}^t R$ in terms of the Chern curvature tensor ${}^c R$ and the torsion ${}^c T$ of the Chern connection. First, we connect to the notation of the previous discussion.
\begin{lem} \label{lem:AtGauduchon}
Let $(X, \omega_g)$ be a Hermitian manifold. For the $t$--Gauduchon connection ${}^t \nabla$ the CR--torsion ${}^tA := {}^{{}^t\nabla}A$ is given by
\[
g({}^tA_{\overline u}v,\overline w) = \frac{1-t}{2}g(v,\overline{{}^cT(u,w)}),
\]
for $u,v,w \in T^{1,0}X$.
\end{lem}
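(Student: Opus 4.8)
The plan is to compute the $(1,0)$-component of the torsion of the $t$--Gauduchon connection directly from its defining data, namely $T_b^{1,1}=0$ and $\mathcal{B}(T_c^{1,1}) = \tfrac{t-1}{3}d^c\omega$, and then to express it in terms of the Chern torsion using the formula $\mathcal{B}(T^{2,0}-T_c^{1,1}) = \tfrac13 d^c\omega$ already established in Section~2.2. Since the Chern connection is the Gauduchon connection at $t=1$, for which $T_c^{1,1}=0$ (so all its torsion is in $T^{2,0}$, governed by $\tfrac13 d^c\omega$), the difference ${}^t\nabla - {}^c\nabla$ is controlled entirely by ${}^tT_c^{1,1}$, whose Bianchi image is $\tfrac{t-1}{3}d^c\omega = (t-1)\,\mathcal{B}({}^cT)$. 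First I would use the isomorphism $\mathcal{B}\colon \Omega_c^{1,1}(TX)\to\Omega_X^{3,+}$ to deduce that ${}^tT_c^{1,1} = (t-1)\,{}^cT^{2,0}$ as $TX$-valued forms (after matching the relevant projections), so that ${}^tT^{1,1} = {}^tT_c^{1,1} = (t-1)\,{}^cT^{2,0}$ viewed appropriately; equivalently the $(0,1)$-part of ${}^t\nabla - {}^c\nabla$ acting on $T^{1,0}X$ is read off from the $T(\overline u, v)^{1,0}$ component.

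Next I would unwind the definition ${}^tA_{\overline u}v = {}^tT(\overline u, v)^{1,0}$ and pair with $\overline w$. Using the compatibility of the metric with the type decomposition, $g({}^tT(\overline u,v)^{1,0},\overline w)$ is governed by $\mathcal{B}({}^tT_c^{1,1})$ evaluated on $(v,\overline u,\overline w)$ up to the combinatorial factor $3$ coming from the definition of $\mathcal{B}$, together with the fact that the $T_b^{1,1}$ part vanishes so there is no ambiguity in recovering $T_c^{1,1}$ from its Bianchi image. Concretely, writing $\mathcal{B}({}^tT_c^{1,1}) = \tfrac{t-1}{3}d^c\omega = \tfrac{t-1}{3}\cdot\sqrt{-1}(\bar\partial-\partial)\omega$ and tracking which component of $d^c\omega$ is the $(1,2)$-part, one gets a clean expression $g({}^tA_{\overline u}v,\overline w) = \tfrac{1-t}{2} g(v, \overline{{}^cT(u,w)})$, where the $\tfrac12$ is exactly the $2\mu$ versus $\mathcal{B}(\mu)$ bookkeeping (the torsion is $2T_c^{1,1} + \tfrac13\mathcal{B}^{-1}(d^c\omega)$ in the earlier notation, and only the $T_c^{1,1}$ piece contributes to the $(0,1)\otimes\mathrm{End}$ part). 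A cross-check: at $t=1$ the right side vanishes, consistent with ${}^cA=0$; and the conjugation-symmetry of $d^c\omega$ used in Section~2.2 matches the appearance of $\overline{{}^cT(u,w)}$ rather than ${}^cT(u,w)$.

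The main obstacle I anticipate is purely bookkeeping: correctly disentangling the factor-of-$3$ in the definition of the Bianchi projector $\mathcal{B}$, the factor-of-$2$ relating $T_c^{1,1}=\mu$ to its contribution $2\mu$ in the full torsion, and the $\sqrt{-1}$ and sign conventions in $d^c\omega = \sqrt{-1}(\bar\partial - \partial)\omega$, so that the constants assemble into exactly $\tfrac{1-t}{2}$. An alternative, and perhaps cleaner, route avoids $\mathcal{B}$ altogether: since ${}^t\nabla = t\,{}^c\nabla + (1-t)\,{}^l\nabla$ by definition, it suffices to establish the $t=0$ case — i.e. that $g({}^lA_{\overline u}v,\overline w) = \tfrac12 g(v,\overline{{}^cT(u,w)})$ for the Lichnerowicz connection — and then conclude by affine-linearity in $t$ (noting ${}^cA = 0$). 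The $t=0$ case is the classical comparison between the complexified Levi-Civita connection and the Chern connection, where $2({}^{\mathrm{LC}}\nabla - {}^c\nabla)$ is expressed through the torsion $d\omega$; this is standard (e.g. as in \cite{GauduchonHermitianConnections} or \cite{WangYang}) and reduces the whole lemma to a one-parameter interpolation.
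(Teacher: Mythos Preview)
Your proposal is correct. Your alternative route --- verify the formula at two values of $t$ and conclude by affine-linearity of the Gauduchon line --- is exactly what the paper does, except that the paper anchors at $t=1$ (Chern, where ${}^1A=0$ trivially) and $t=-1$ (Bismut) rather than $t=1$ and $t=0$. The Bismut endpoint is arguably cleaner than the Lichnerowicz one: the paper simply observes that the connection ${}^{-1}D$ built from the claimed ${}^{-1}A$ has totally skew-symmetric torsion, which characterizes ${}^b\nabla$ uniquely, so no explicit comparison with the Levi-Civita connection is needed. Your first route via the Bianchi projector $\mathcal{B}$ is a genuinely different argument; it works in principle, but as you yourself anticipate, the factor-of-$3$, factor-of-$2$, and $d^c\omega$ sign conventions make it heavier bookkeeping for no real gain, which is presumably why the paper opts for the two-point interpolation instead.
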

\begin{proof}
Let ${}^tA$ be as claimed. We must show that ${}^t\nabla = {}^tD$ for all $t \in \mathbb{R}$, where ${}^tD$ is the Hermitian connection defined by
\[
{}^tD_{\overline u} = {}^c\nabla_{\overline u} + {}^tA_{\overline u},\qquad {}^tD_{u} = {}^c\nabla_{u} - ({}^tA_{\overline u})^*,
\]
for all $u \in T^{1,0}X$. Since the Gauduchon connections form a line, it suffices to verify this for $t = 1$ and $t = -1$ (the Chern and Bismut connections). Since ${}^1A = 0$, we have ${}^1D = {}^c\nabla = {}^1\nabla$ as required. On the other hand, it is a simple computation to show that the torsion of ${}^{-1}D$ is totally skew-symmetric. Hence ${}^{-1}D = {}^b\nabla = {}^{-1}\nabla$.
\end{proof}

Using this, we can prove the following:

\begin{thmfix}\label{MainCurvatureFormula}
Let $(X, \omega)$ be a Hermitian manifold.  In any unitary frame,  the curvature of $t$--Gauduchon connection is given by  \begin{eqnarray}
\label{eqn:tCurv11}{}^t R_{i \overline{j} k \overline{\ell}} &=& t {}^c R_{i \overline{j} k \overline{\ell}} + \frac{(1-t)}{2} \left( {}^c R_{k \overline{j} i \overline{\ell}} + {}^c R_{i \overline{\ell} k \overline{j}} \right) + \left(\frac{1-t}{2}\right)^2\left( {}^cT_{ik}^r\overline{{}^cT_{j\ell}^r}- {}^cT_{i r}^\ell\overline{{}^cT_{jr}^k} \right), \\
\label{eqn:tCurv20}{}^t R_{ij k \overline{\ell}} &=&  \frac{1-t}{2}\left({}^cT^\ell_{ik,j} - {}^cT^\ell_{jk,i}\right)+\left(\frac{1-t}{2}\right)^2\left({}^cT^r_{jk}{}^cT^\ell_{ir} - {}^cT^r_{ik}{}^cT^\ell_{jr}\right) - \frac{1-t}{2}{}^cT^r_{ij}\overline{{}^cT^l_{rk}}.
\end{eqnarray}
for $1 \leq i,j,k,\ell \leq n$.
\end{thmfix}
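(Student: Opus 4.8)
The plan is to specialize Proposition \ref{prop:ACurvatureVB} to the $t$--Gauduchon connection, using the explicit description of ${}^t A$ furnished by Lemma \ref{lem:AtGauduchon}. By Lemma \ref{lem:AtGauduchon}, in a unitary frame $\{e_i\}$ with dual coframe, the components of ${}^tA$ are ${}^tA_{\overline{i}}{}^\ell{}_k = \tfrac{1-t}{2}\overline{{}^cT^\ell_{ik}}$ (up to the obvious conjugation bookkeeping), and correspondingly the components of the adjoint $({}^tA)^*$ are $\tfrac{1-t}{2}{}^cT^\ell_{ik}$. First I would substitute these into \eqref{eqn1}. The term $t\,{}^c\Theta + (\text{terms})$ will not appear directly; rather ${}^c\Theta$ appears with coefficient $1$, so I must rewrite ${}^c\Theta$ together with the derivative terms $({}^c\nabla_u {}^tA)_{\overline v}$ and $(({}^c\nabla_v {}^tA)_{\overline u})^*$ to produce the combination $t\,{}^cR_{i\overline j k\overline\ell} + \tfrac{1-t}{2}({}^cR_{k\overline j i\overline\ell}+{}^cR_{i\overline\ell k\overline j})$. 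This is exactly where the second Bianchi identity for the Chern connection enters: the covariant derivatives of the Chern torsion, ${}^cT^\ell_{ik,\overline j}$ and its conjugate, are related to differences of Chern curvature components via ${}^cR_{i\overline j k\overline\ell} - {}^cR_{k\overline j i\overline\ell} = -\,{}^cT^\ell_{ik,\overline j}$ (in the standard convention; I would fix signs against \eqref{eqn:tCurv20}, which already records ${}^cT^\ell_{ik,j}$). Feeding this identity in converts the $({}^c\nabla {}^tA)$ terms into curvature terms with the stated coefficients.

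Next I would collect the quadratic terms. The bracket $-[({}^tA)^*_u, {}^tA_{\overline v}]$ in \eqref{eqn1} produces, after writing out the commutator of endomorphisms in the unitary frame, precisely $\left(\tfrac{1-t}{2}\right)^2$ times a difference of products ${}^cT^r_{ik}\overline{{}^cT^r_{j\ell}} - {}^cT^\ell_{ir}\overline{{}^cT^k_{jr}}$; I would check the index contractions carefully so the two quadratic monomials come out with the signs in \eqref{eqn:tCurv11}. For the $(2,0)$ component \eqref{eqn:tCurv20}, I would similarly substitute into \eqref{eqn2}: the term $-{}^tA^*_{{}^cT(u,v)}$ gives the $-\tfrac{1-t}{2}{}^cT^r_{ij}\overline{{}^cT^\ell_{rk}}$ piece, the two covariant-derivative terms $\pm(({}^c\nabla {}^tA)\cdot)^*$ give $\tfrac{1-t}{2}({}^cT^\ell_{ik,j}-{}^cT^\ell_{jk,i})$, and the commutator $[({}^tA)^*_{\overline u},({}^tA)^*_{\overline v}]$ (note: the statement of \eqref{eqn2} has a typo, both slots being $\overline u$; it should be $\overline u$ and $\overline v$) yields the $\left(\tfrac{1-t}{2}\right)^2({}^cT^r_{jk}{}^cT^\ell_{ir}-{}^cT^r_{ik}{}^cT^\ell_{jr})$ piece. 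Throughout, I would use that in a unitary frame the Christoffel symbols of the Chern connection vanish at the chosen point, so covariant derivatives reduce to ordinary derivatives and adjoints are just index-raising by the identity matrix.

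The main obstacle I anticipate is bookkeeping of signs and conjugations rather than any conceptual difficulty: the adjoint operation, the placement of bars on the torsion components, and the precise form of the Chern second Bianchi identity all have sign conventions that must be pinned down consistently against the two displayed formulas. A secondary subtlety is that \eqref{eqn1}--\eqref{eqn2} are stated for a general Hermitian connection in terms of the abstract tensor ${}^\nabla A$, so I must be careful that the frame components of ${}^tA_{\overline u}v$ versus those of $g({}^tA_{\overline u}v,\overline w)$ (which is what Lemma \ref{lem:AtGauduchon} gives) are correctly related — in a unitary frame these differ only by which index is barred, but getting this wrong would flip signs in the quadratic terms. Once the conventions are fixed, the computation is a direct, if lengthy, substitution, and I would present only the key intermediate identities (the Bianchi relation and the expanded commutators) rather than every line.
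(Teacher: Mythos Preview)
Your proposal is correct and follows essentially the same approach as the paper: specialize Proposition~\ref{prop:ACurvatureVB} via Lemma~\ref{lem:AtGauduchon}, then use the Chern Bianchi identity ${}^cT^{k}_{ji,\overline{\ell}} = {}^cR_{i\overline{\ell}j\overline{k}} - {}^cR_{j\overline{\ell}i\overline{k}}$ to convert the torsion-derivative terms into the curvature combination in \eqref{eqn:tCurv11}, with the quadratic pieces coming from the commutators exactly as you describe. One terminological quibble: the identity you invoke is the \emph{first} Bianchi identity for the Chern connection (relating curvature antisymmetrization to the $\bar\partial$-derivative of torsion), not the second; otherwise your plan matches the paper's proof line for line, including the observation about the typo in \eqref{eqn2}.
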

\begin{proof}
Let $\{e_i\}_{i=1}^n$ be a unitary frame. By Proposition \ref{prop:ACurvatureVB}, for $1 \leq i,j,k, \ell \leq n$, we have 
\begin{eqnarray}
    \label{eqn:curv11}{}^{\nabla}R_{i\overline j k \overline \ell} &=& {}^cR_{i\overline j k \overline \ell} + A_{\overline j k,i}^\ell + \overline{A_{\overline i \ell,j}^k} - \overline{A_{\overline i \ell}^r}A^r_{\overline j k} + A_{\overline j r}^{\ell}\overline{A_{\overline i r}^k},\\
    \label{eqn:curv20}{}^{\nabla} R_{ij k\overline \ell} &=& \overline{A_{\overline i \ell}^r A_{\overline j r}^k} - \overline{A_{\overline j \ell }^r A_{\overline i r}^k} + \overline{A_{\overline i \ell,\overline j}^k} - \overline{A_{\overline j \ell,\overline i}^k} - {}^cT_{ij}^r\overline{A_{\overline r l}^k},
\end{eqnarray}
where the comma indicates covariant differentiation with respect to the Chern connection ${}^c\nabla$ and $A_{\overline i j}^k := \langle A_{\overline{e_i}}e_j,\overline{e_k}\rangle$. By Lemma \ref{lem:AtGauduchon}, 
\[
    {}^t A_{\overline i j}^k \ = \ \frac{1-t}{2}\overline{{}^cT_{ik}^j}.
\]
Hence, by Equation (\ref{eqn:curv11}),
\[
{}^tR_{i\overline j k \overline \ell} = {}^cR_{i\overline j k \overline \ell} + \frac{1-t}{2}\left(\overline{{}^cT_{j\ell,\overline i}^k} + {}^cT_{ik,\overline j}^\ell\right) +\left(\frac{1-t}{2}\right)^2\left( {}^cT_{ik}^r\overline{{}^cT_{j\ell}^r}- {}^cT_{i r}^\ell\overline{{}^cT_{jr}^k} \right).
\]
The Bianchi identity ${}^c T_{ji,\overline \ell}^k = {}^cR_{i\overline \ell j \overline k} - {}^cR_{j \overline \ell i \overline k}$ for the Chern connection implies Equation (\ref{eqn:tCurv11}). On the other hand, Equation (\ref{eqn:tCurv20}) follows from Equation (\ref{eqn:curv20}).
\end{proof}

The referee pointed out to us that the above result (although proved independently) can be viewed as a Corollary of \cite[Theorem 1.6]{WangYang} (see \cite[Corollary 1.7]{WangYang}).

\section{The Gauduchon Ricci Curvatures}

We remind the reader that for a general Gauduchon connection ${}^t \nabla$, the curvature will have $(2,0)$ and $(0,2)$--parts which are non-zero.  As a consequence, there is a distinction to be made between the Ricci curvatures of a Gauduchon connection and the $(1,1)$--part of the Gauduchon Ricci curvatures. The study of the Chern Ricci curvatures and the $(1,1)$--part of the Lichnerowicz Ricci curvature was extensively studied in \cite{LiuYangRicci}. Liu--Yang also studied the Bismut Ricci curvatures in \cite{LiuYangGeometry}. We invite the reader to also see the papers \cite{Barbaro,BroderThesis,ChenChenNie,Correa,FuZhou,HeLiuYang,IvanovPapadopoulos,LiuYangLCRF,WangYang,YangZhengCurvature,YangScalarCurvature, YangKodairaDimension,YangSecondRicci,ZhaoZheng} for other known results concerning these Ricci curvatures.

\subsection*{3.1. Formulae for the Gauduchon Ricci Curvatures}

\begin{defn}\label{def:Ricci}
Let $(X, \omega_g)$ be a Hermitian manifold.  Let ${}^t \nabla$ denote the $t$--Gauduchon connection on $T^{1,0}X$ with curvature $R = {}^t R$.  We define the \textit{$t$--Gauduchon Ricci curvatures} \begin{eqnarray*}
{}^t \text{Ric}_{\omega_g}^{(1)} & : = & \sqrt{-1} {}^t \text{Ric}_{i \overline{j}}^{(1)} e^i \wedge \overline{e}^j, \hspace*{1cm} {}^t \text{Ric}_{i \overline{j}}^{(1)} \ : = \ g^{k \overline{\ell}} R_{i \overline{j} k \overline{\ell}}, \\
{}^t \text{Ric}_{\omega_g}^{(2)} & : = &  \sqrt{-1} {}^t \text{Ric}_{k \overline{\ell}}^{(2)} e^k \wedge \overline{e}^{\ell}, \hspace*{1cm} {}^t \text{Ric}_{k \overline{\ell}}^{(2)} \ : = \ g^{i \overline{j}} R_{i \overline{j} k \overline{\ell}},  \\
{}^t \text{Ric}_{\omega_g}^{(3)} & : = & \sqrt{-1} {}^t \text{Ric}_{i \overline{\ell}}^{(3)} e^i \wedge \overline{e}^{\ell}, \hspace*{1cm} {}^t \text{Ric}_{i \overline{\ell}}^{(3)} \ : = \ g^{k \overline{j}} R_{i \overline{j} k \overline{\ell}}, \\
{}^t \text{Ric}_{\omega_g}^{(4)} & : = &  \sqrt{-1} {}^t \text{Ric}_{k \overline{j}}^{(4)} e^k \wedge \overline{e}^j, \hspace*{1cm} {}^t \text{Ric}_{k \overline{j}}^{(4)} \ : = \ g^{i \overline{\ell}} R_{i \overline{j} k \overline{\ell}}.
\end{eqnarray*}
\end{defn}

By taking various traces of the formulae specified in Theorem \ref{MainCurvatureFormula}, we have:

\begin{cor}\label{RicciRelationsCor}
Let $(X, \omega)$ be a Hermitian manifold. In any unitary frame, the Gauduchon Ricci curvatures are given by \begin{eqnarray*}
{}^t \text{Ric}_{\omega}^{(1)} &=& t {}^c \text{Ric}_{\omega}^{(1)} + \frac{(1-t)}{2} \left( {}^c \text{Ric}_{\omega}^{(3)} + {}^c \text{Ric}_{\omega}^{(4)} \right) \\
{}^t \text{Ric}_{\omega}^{(2)} &=& t {}^c \text{Ric}_{\omega}^{(2)} + \frac{(1-t)}{2} \left( {}^c \text{Ric}_{\omega}^{(3)} + {}^c \text{Ric}_{\omega}^{(4)} \right) + \sqrt{-1}\frac{(1-t)^2}{4} \sum_{i,r} \left( {}^c T_{ik}^r \overline{{}^c T_{i\ell}^r} - {}^c T_{ir}^{\ell} \overline{{}^c T_{ir}^k} \right) e^k \wedge \overline{e^\ell}\\
{}^t \text{Ric}_{\omega}^{(3)} &=& t {}^c \text{Ric}_{\omega}^{(3)} + \frac{(1-t)}{2} \left( {}^c \text{Ric}_{\omega}^{(1)} + {}^c \text{Ric}_{\omega}^{(2)} \right) + \sqrt{-1}\frac{(1-t)^2}{4} \sum_{k,r} \left( {}^c T_{ik}^r \overline{{}^c T_{k\ell}^r} - {}^c T_{ir}^{\ell} \overline{{}^c T_{kr}^k} \right)e^i \wedge \overline{e^\ell} \\
{}^t \text{Ric}_{\omega}^{(4)} &=& t {}^c \text{Ric}_{\omega}^{(4)} + \frac{(1-t)}{2} \left( {}^c \text{Ric}_{\omega}^{(1)} + {}^c \text{Ric}_{\omega}^{(2)} \right) + \sqrt{-1}\frac{(1-t)^2}{4} \sum_{i,r} \left( {}^c T_{ik}^r \overline{{}^c T_{ji}^r} - {}^c T_{ir}^i \overline{{}^c T_{jr}^k} \right)e^k\wedge \overline{e^j}.
\end{eqnarray*}
\end{cor}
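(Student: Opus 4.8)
The plan is to derive each of the four formulae for ${}^t\operatorname{Ric}^{(j)}_\omega$ directly by contracting the curvature identities \eqref{eqn:tCurv11} and \eqref{eqn:tCurv20} of Theorem \ref{MainCurvatureFormula} against the metric, using the unitary-frame normalization $g^{k\bar\ell}=\delta_{k\ell}$ so that all four traces become honest sums over a repeated index. Since each ${}^t\operatorname{Ric}^{(j)}$ is a sum of a ``linear-in-curvature'' part and a ``quadratic-in-torsion'' part, I would treat these two contributions separately and then reassemble.

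\textbf{Step 1: the linear-in-${}^cR$ terms.} Apply the relevant contraction to the right-hand side of \eqref{eqn:tCurv11}. For ${}^t\operatorname{Ric}^{(1)}$ I sum over $k=\ell$: the term $t\,{}^cR_{i\bar j k\bar k}$ gives $t\,{}^c\operatorname{Ric}^{(1)}_{i\bar j}$, while $\tfrac{1-t}{2}\big({}^cR_{k\bar j i\bar k}+{}^cR_{i\bar k k\bar j}\big)$ gives $\tfrac{1-t}{2}\big({}^c\operatorname{Ric}^{(4)}_{i\bar j}+{}^c\operatorname{Ric}^{(3)}_{i\bar j}\big)$ after matching indices against Definition \ref{def:Ricci}. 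For ${}^t\operatorname{Ric}^{(2)}$ (sum $i=j$), ${}^t\operatorname{Ric}^{(3)}$ (sum $k=j$), and ${}^t\operatorname{Ric}^{(4)}$ (sum $i=\ell$), the same bookkeeping produces the stated combinations; the only subtlety is keeping track of which of the two ``swapped'' curvature terms in \eqref{eqn:tCurv11} contributes a ${}^c\operatorname{Ric}^{(1)}$, ${}^c\operatorname{Ric}^{(2)}$, ${}^c\operatorname{Ric}^{(3)}$ or ${}^c\operatorname{Ric}^{(4)}$ under each contraction, which is a finite check.

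\textbf{Step 2: the quadratic-in-torsion terms.} Contract the term $\big(\tfrac{1-t}{2}\big)^2\big({}^cT^r_{ik}\overline{{}^cT^r_{j\ell}}-{}^cT^\ell_{ir}\overline{{}^cT^k_{jr}}\big)$ from \eqref{eqn:tCurv11}. For ${}^t\operatorname{Ric}^{(1)}$, contracting $k=\ell$ yields $\big(\tfrac{1-t}{2}\big)^2\sum_{k,r}\big({}^cT^r_{ik}\overline{{}^cT^r_{jk}}-{}^cT^k_{ir}\overline{{}^cT^k_{jr}}\big)$, and a relabeling of summation indices $k\leftrightarrow r$ in the second sum shows this vanishes identically — which is exactly why no torsion term appears in the formula for ${}^t\operatorname{Ric}^{(1)}$ (consistent with Theorem \ref{MainThm2}, since ${}^t\operatorname{Ric}^{(1)}$ is cohomologous to $c_1^{\mathrm{AC}}$). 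For ${}^t\operatorname{Ric}^{(2)},{}^t\operatorname{Ric}^{(3)},{}^t\operatorname{Ric}^{(4)}$ the analogous contractions do not collapse, and produce precisely the $\sqrt{-1}\,\tfrac{(1-t)^2}{4}\sum(\cdots)\,e^\bullet\wedge\overline{e^\bullet}$ terms written in the statement, where the factor $\tfrac14$ combines with the overall $\sqrt{-1}$ coming from the definition ${}^t\operatorname{Ric}_\omega = \sqrt{-1}\,{}^t\operatorname{Ric}_{\bullet\bar\bullet}\,e^\bullet\wedge\overline{e^\bullet}$. (One should also check that contracting $T^{0,2}$-type or purely $(2,0)$ pieces of ${}^tR$, i.e. \eqref{eqn:tCurv20}, contributes nothing to these particular $(1,1)$-traces — immediate since \eqref{eqn:tCurv20} has no $\bar j$ index to contract against for ${}^t\operatorname{Ric}^{(1)},{}^t\operatorname{Ric}^{(4)}$, and one verifies the index structure rules it out for the others.)

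\textbf{Main obstacle.} The genuinely error-prone part is Step 1's index bookkeeping: the ``swapped'' Chern curvature terms ${}^cR_{k\bar j i\bar\ell}$ and ${}^cR_{i\bar\ell k\bar j}$ in \eqref{eqn:tCurv11} get traced in four different ways, and correctly identifying each contraction with the right ${}^c\operatorname{Ric}^{(m)}$ requires care with the convention in Definition \ref{def:Ricci} (which slot is traced) together with the Chern symmetry ${}^cR_{i\bar j k\bar\ell}=\overline{{}^cR_{j\bar i \ell\bar k}}$. I would organize this in a small table — for each $j\in\{1,2,3,4\}$, record which pair of indices is set equal and read off the three resulting Chern Ricci tensors — and then the quadratic torsion terms follow by direct substitution. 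No deep idea is needed beyond Theorem \ref{MainCurvatureFormula}; the result is purely a careful four-fold contraction.
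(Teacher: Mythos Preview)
Your proposal is correct and follows exactly the paper's approach: the paper states the corollary immediately after Theorem \ref{MainCurvatureFormula} with only the remark ``By taking various traces of the formulae specified in Theorem \ref{MainCurvatureFormula}, we have,'' and your Step 1/Step 2 contraction argument is precisely that, carried out in more detail. In particular, your observation that the torsion contribution to ${}^t\operatorname{Ric}^{(1)}$ cancels under the relabeling $k\leftrightarrow r$ is the one non-obvious check, and it is right.
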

To simplify these expressions, we introduce the following:
\begin{notn}\label{notation3313}
    Building from \cite[p. 17]{LiuYangRicci}, we define the following $(1,1)$-forms:
    \[
        {}^c T^\diamondsuit \ := \ \sqrt{-1} {}^c T^k_{iq}\,\overline{{}^c T^k_{jq}}e^i\wedge \overline{e^j},\qquad {}^c T^\circ \ := \  \sqrt{-1} {}^c T^j_{sp}\,\overline{{}^c T^i_{sp}}e^i\wedge \overline{e^j},\qquad {}^c T^\heartsuit \ := \  \sqrt{-1} {}^c T^j_{iq}\,\overline{{}^c T^k_{kq}}e^i\wedge\overline{e^j},
    \]
    where $\{e_i\}_{i=1}^n$ is a unitary frame, $\{e^i\}_{i=1}^n$ the corresponding dual coframe, and ${}^c T_{ij}^k := g({}^cT(e_i,e_j),\overline{e_k})$ are the components of the Chern torsion in this frame. We note that ${}^c T^\diamondsuit$ and ${}^c T^\circ$ are real $(1,1)$--forms.
 \end{notn}

Recall from \cite{LiuYangRicci}, we know that ${}^c \text{Ric}_{\omega}^{(3)} = {}^c \text{Ric}_{\omega}^{(1)} - \partial \partial^{\ast} \omega$ and ${}^c \text{Ric}_{\omega}^{(4)} = {}^c \text{Ric}_{\omega}^{(1)} - \bar{\partial} \bar{\partial}^{\ast} \omega$. We may therefore write the Ricci curvatures in the following form:
\begin{thm}\label{GauduchonRicciRelations}
Let $(X, \omega)$ be a Hermitian manifold. The Gauduchon Ricci curvatures are given by \begin{eqnarray*}
{}^t \text{Ric}_{\omega}^{(1)} &=& {}^c \text{Ric}_{\omega}^{(1)} + \frac{(t-1)}{2} (\partial \partial^{\ast} \omega + \bar{\partial} \bar{\partial}^{\ast} \omega), \\
{}^t \text{Ric}_{\omega}^{(2)} &=& t {}^c \text{Ric}_{\omega}^{(2)} + (1-t) {}^c \text{Ric}_{\omega}^{(1)} + \frac{(t-1)}{2} (\partial \partial^{\ast} \omega + \bar{\partial} \bar{\partial}^{\ast} \omega) + \frac{(1-t)^2}{4}\left({}^c T^{\diamondsuit} - {}^c T^{\circ}\right), \\
{}^t \text{Ric}_{\omega}^{(3)} &=& t {}^c \text{Ric}_{\omega}^{(3)} + \frac{(1-t)}{2} \left( {}^c \text{Ric}_{\omega}^{(1)} + {}^c \text{Ric}_{\omega}^{(2)} \right) - \frac{(1-t)^2}{4}\left({}^c T^{\diamondsuit} + {}^c T^{\heartsuit} \right), \\
{}^t \text{Ric}_{\omega}^{(4)} &=& t {}^c \text{Ric}_{\omega}^{(4)} + \frac{(1-t)}{2} \left( {}^c \text{Ric}_{\omega}^{(1)} + {}^c \text{Ric}_{\omega}^{(2)} \right) - \frac{(1-t)^2}{4}\left({}^c T^{\diamondsuit} + \overline{{}^c T^{\heartsuit}}\right).
\end{eqnarray*} \end{thm}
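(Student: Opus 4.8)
The plan is to reduce everything to Corollary \ref{RicciRelationsCor} and the two identities ${}^c \text{Ric}_{\omega}^{(3)} = {}^c \text{Ric}_{\omega}^{(1)} - \partial \partial^{\ast} \omega$ and ${}^c \text{Ric}_{\omega}^{(4)} = {}^c \text{Ric}_{\omega}^{(1)} - \bar{\partial} \bar{\partial}^{\ast} \omega$ recalled from \cite{LiuYangRicci}, together with the bookkeeping supplied by Notation \ref{notation3313}. So the proof is essentially a substitution-and-collect argument; the only genuine content is identifying the torsion quadratics appearing in Corollary \ref{RicciRelationsCor} with ${}^c T^\diamondsuit$, ${}^c T^\circ$, and ${}^c T^\heartsuit$.

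For the formula for ${}^t \text{Ric}_{\omega}^{(1)}$: first I would take the expression from Corollary \ref{RicciRelationsCor}, namely ${}^t \text{Ric}_{\omega}^{(1)} = t\, {}^c \text{Ric}_{\omega}^{(1)} + \tfrac{1-t}{2}({}^c \text{Ric}_{\omega}^{(3)} + {}^c \text{Ric}_{\omega}^{(4)})$, substitute the two Liu--Yang identities, and observe that the ${}^c \text{Ric}_{\omega}^{(1)}$ terms combine as $t + \tfrac{1-t}{2} + \tfrac{1-t}{2} = 1$, leaving exactly $-\tfrac{1-t}{2}(\partial\partial^\ast\omega + \bar\partial\bar\partial^\ast\omega) = \tfrac{t-1}{2}(\partial\partial^\ast\omega + \bar\partial\bar\partial^\ast\omega)$, which is the claim. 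Note that the trace of the torsion quadratic in \eqref{eqn:tCurv11} vanishes identically after summing over $i=k$ and contracting $\ell$ against $j$ (the two terms ${}^cT_{ik}^r\overline{{}^cT_{j\ell}^r}$ and ${}^cT_{ir}^\ell\overline{{}^cT_{jr}^k}$ cancel under that contraction), which is why ${}^t \text{Ric}_{\omega}^{(1)}$ has no torsion-squared contribution; this should be remarked on rather than recomputed.

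For ${}^t \text{Ric}_{\omega}^{(2)}$: substitute the Liu--Yang identities into the Corollary \ref{RicciRelationsCor} formula, combining $\tfrac{1-t}{2}({}^c \text{Ric}_{\omega}^{(3)} + {}^c \text{Ric}_{\omega}^{(4)}) = (1-t)\,{}^c \text{Ric}_{\omega}^{(1)} - \tfrac{1-t}{2}(\partial\partial^\ast\omega + \bar\partial\bar\partial^\ast\omega)$, and then recognize the torsion sum $\sum_{i,r}({}^c T_{ik}^r \overline{{}^c T_{i\ell}^r} - {}^c T_{ir}^{\ell}\overline{{}^c T_{ir}^k})$ from Corollary \ref{RicciRelationsCor} as the $(k,\bar\ell)$-components of ${}^cT^\diamondsuit - {}^cT^\circ$ (up to the $\sqrt{-1}$ and wedge conventions of Notation \ref{notation3313}, noting $\sqrt{-1} \cdot \tfrac{(1-t)^2}{4} = \tfrac{(1-t)^2}{4}\sqrt{-1}$ attaches to give the real $(1,1)$-form). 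The formulae for ${}^t \text{Ric}_{\omega}^{(3)}$ and ${}^t \text{Ric}_{\omega}^{(4)}$ are obtained the same way, except here I would \emph{not} substitute the Liu--Yang identities into the leading $t\,{}^c\text{Ric}^{(3)}_\omega$ (resp. $t\,{}^c\text{Ric}^{(4)}_\omega$) term — it is kept as is per the statement — and the only work is matching the torsion sums $\sum_{k,r}({}^c T_{ik}^r \overline{{}^c T_{k\ell}^r} - {}^c T_{ir}^{\ell}\overline{{}^c T_{kr}^k})$ and $\sum_{i,r}({}^c T_{ik}^r \overline{{}^c T_{ji}^r} - {}^c T_{ir}^i \overline{{}^c T_{jr}^k})$ against $-({}^cT^\diamondsuit + {}^cT^\heartsuit)$ and $-({}^cT^\diamondsuit + \overline{{}^cT^\heartsuit})$ respectively; the mixed term ${}^cT^\heartsuit$ (as opposed to its conjugate) appears precisely because in the ${}^t\text{Ric}^{(3)}$ case the free indices sit as $e^i \wedge \overline{e^\ell}$ with the trace on $k$, whereas in the ${}^t\text{Ric}^{(4)}$ case they sit as $e^k \wedge \overline{e^j}$ with the trace on $i$, swapping the roles of holomorphic and antiholomorphic slots in the ${}^cT^k_{kq}$ factor.

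The main obstacle — really the only subtlety — is the index-chasing needed to confirm that the raw quadratic torsion terms in Corollary \ref{RicciRelationsCor} are exactly the $(1,1)$-forms ${}^cT^\diamondsuit$, ${}^cT^\circ$, ${}^cT^\heartsuit$ (and where a complex conjugate must be inserted), since the summation indices are displayed inconsistently across the four lines of the Corollary and one must be careful about which index is the contracted ``dummy'' and which is free, and about the placement of $\sqrt{-1}$ relative to the wedge product in matching Notation \ref{notation3313}. Once the dictionary $\sum_{r}{}^cT^r_{ik}\overline{{}^cT^r_{j\ell}} \leftrightarrow {}^cT^\diamondsuit$ etc. is pinned down, each of the four formulae follows by collecting the ${}^c\text{Ric}^{(1)}_\omega$, $\partial\partial^\ast\omega$, $\bar\partial\bar\partial^\ast\omega$, and torsion-squared terms separately, and no further estimates or geometric input are required.
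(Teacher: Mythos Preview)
Your proposal is correct and follows exactly the approach of the paper: the theorem is obtained from Corollary \ref{RicciRelationsCor} by substituting the Liu--Yang identities ${}^c \text{Ric}_{\omega}^{(3)} = {}^c \text{Ric}_{\omega}^{(1)} - \partial \partial^{\ast} \omega$ and ${}^c \text{Ric}_{\omega}^{(4)} = {}^c \text{Ric}_{\omega}^{(1)} - \bar{\partial} \bar{\partial}^{\ast} \omega$ and recognizing the torsion quadratics via Notation \ref{notation3313}. One small slip: in your remark on why ${}^t\text{Ric}^{(1)}_\omega$ carries no torsion-squared term, the relevant contraction is $k=\ell$ (not $i=k$), under which the two torsion terms in \eqref{eqn:tCurv11} become identical after swapping the dummy indices $k \leftrightarrow r$ and hence cancel.
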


Liu--Yang \cite{LiuYangGeometry,LiuYangRicci} showed that the second Chern Ricci curvature ${}^c \text{Ric}^{(2)}$ is related to the first Chern Ricci curvature ${}^c \text{Ric}^{(1)}$ by the formula \begin{eqnarray*}
    {}^c \text{Ric}_{\omega}^{(2)} &=& {}^c \text{Ric}_{\omega}^{(1)} - \sqrt{-1} \Lambda(\partial \bar{\partial} \omega) - (\partial \partial^{\ast} \omega + \bar{\partial} \bar{\partial}^{\ast} \omega) + {}^c T^{\circ}.
\end{eqnarray*}

In the above formula, $\Lambda$ denotes the formal adjoint of the Lefschetz operator, $\partial^{\ast}$ and $\bar{\partial}^{\ast}$ denote the formal adjoints of $\partial$ and $\bar{\partial}$. A kind exposition of these operators is given in \cite{ChenChenNie}. Let us remark that Liu--Yang \cite{LiuYangGeometry} showed that a Hermitian metric $\omega$ satisfying $\Lambda(\partial \bar{\partial} \omega)=0$ is pluriclosed (i.e., $\partial \bar{\partial}\omega=0$). From this formula for the second Chern Ricci curvature, we have the following:

\begin{cor}
Let $(X, \omega)$ be a Hermitian manifold.  The Bismut Ricci curvatures are related by \begin{eqnarray*}
{}^b \text{Ric}_{\omega}^{(1)} &=& {}^c \text{Ric}_{\omega}^{(1)} - (\partial \partial^{\ast} \omega + \bar{\partial} \bar{\partial}^{\ast} \omega), \\
{}^b \text{Ric}_{\omega}^{(2)} &=& {}^b \text{Ric}_{\omega}^{(1)} + (\partial \partial^{\ast} \omega + \bar{\partial} \bar{\partial}^{\ast} \omega) + \sqrt{-1} \Lambda(\partial \bar{\partial} \omega) + {}^cT^\diamondsuit -2 {}^c T^{\circ},\\
{}^b \text{Ric}_{\omega}^{(3)} &=& {}^b \text{Ric}_{\omega}^{(1)} + \partial \partial^{\ast} \omega - \sqrt{-1} \Lambda(\partial \bar{\partial} \omega) + {}^c T^{\heartsuit} \\
{}^b \text{Ric}_{\omega}^{(4)} &=& {}^b \text{Ric}_{\omega}^{(1)} + \bar{\partial} \bar{\partial}^{\ast} \omega - \sqrt{-1} \Lambda(\partial \bar{\partial} \omega) + \overline{{}^c T^{\heartsuit}}.
\end{eqnarray*}
\end{cor}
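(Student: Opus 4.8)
The plan is to obtain these four identities by specialising the general Gauduchon formulae of Theorem~\ref{GauduchonRicciRelations} to the Gauduchon parameter $t=-1$, which is exactly the Strominger--Bismut connection ${}^b\nabla = {}^{-1}\nabla$, and then to re-express the right-hand sides in terms of ${}^b\text{Ric}_\omega^{(1)}$ using the relations among the Chern Ricci curvatures recalled above. Setting $t=-1$, the numerical coefficients collapse to $\tfrac{t-1}{2}=-1$, $1-t=2$, $\tfrac{1-t}{2}=1$ and $\tfrac{(1-t)^2}{4}=1$. The first identity of Theorem~\ref{GauduchonRicciRelations} then reads off immediately as ${}^b\text{Ric}_\omega^{(1)} = {}^c\text{Ric}_\omega^{(1)} - (\partial\partial^{\ast}\omega + \bar{\partial}\bar{\partial}^{\ast}\omega)$, and the other three become ${}^b\text{Ric}_\omega^{(2)} = -{}^c\text{Ric}_\omega^{(2)} + 2\,{}^c\text{Ric}_\omega^{(1)} - (\partial\partial^{\ast}\omega + \bar{\partial}\bar{\partial}^{\ast}\omega) + {}^cT^{\diamondsuit} - {}^cT^{\circ}$ and ${}^b\text{Ric}_\omega^{(k)} = -{}^c\text{Ric}_\omega^{(k)} + {}^c\text{Ric}_\omega^{(1)} + {}^c\text{Ric}_\omega^{(2)} - {}^cT^{\diamondsuit} - {}^cT^{\heartsuit}$ (with $\overline{{}^cT^{\heartsuit}}$ in place of ${}^cT^{\heartsuit}$ when $k=4$).

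Next I would eliminate the second, third, and fourth Chern Ricci curvatures by substituting the identities ${}^c\text{Ric}_\omega^{(3)} = {}^c\text{Ric}_\omega^{(1)} - \partial\partial^{\ast}\omega$, ${}^c\text{Ric}_\omega^{(4)} = {}^c\text{Ric}_\omega^{(1)} - \bar{\partial}\bar{\partial}^{\ast}\omega$, and the Liu--Yang formula ${}^c\text{Ric}_\omega^{(2)} = {}^c\text{Ric}_\omega^{(1)} - \sqrt{-1}\,\Lambda(\partial\bar{\partial}\omega) - (\partial\partial^{\ast}\omega + \bar{\partial}\bar{\partial}^{\ast}\omega) + {}^cT^{\circ}$, all of which are recalled just before the statement. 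After these substitutions the right-hand sides involve only ${}^c\text{Ric}_\omega^{(1)}$, the operators $\partial\partial^{\ast}\omega$, $\bar{\partial}\bar{\partial}^{\ast}\omega$, $\Lambda(\partial\bar{\partial}\omega)$, and the quadratic Chern-torsion $(1,1)$--forms of Notation~\ref{notation3313}. Rewriting the first identity as ${}^c\text{Ric}_\omega^{(1)} = {}^b\text{Ric}_\omega^{(1)} + (\partial\partial^{\ast}\omega + \bar{\partial}\bar{\partial}^{\ast}\omega)$ and substituting once more passes everything to ${}^b\text{Ric}_\omega^{(1)}$; along the way one records the intermediate identity ${}^c\text{Ric}_\omega^{(2)} = {}^b\text{Ric}_\omega^{(1)} - \sqrt{-1}\,\Lambda(\partial\bar{\partial}\omega) + {}^cT^{\circ}$, from which the stated formula for ${}^b\text{Ric}_\omega^{(2)}$ drops out after collecting the $\partial\partial^{\ast}\omega$ and $\bar{\partial}\bar{\partial}^{\ast}\omega$ terms (the coefficient of $\partial\partial^{\ast}\omega + \bar{\partial}\bar{\partial}^{\ast}\omega$ ends up being $+1$ and that of ${}^cT^{\circ}$ being $-2$, as required).

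The only step that is not a mechanical linear substitution is the bookkeeping of the quadratic torsion terms in the formulae for ${}^b\text{Ric}_\omega^{(3)}$ and ${}^b\text{Ric}_\omega^{(4)}$: after the substitutions above, the torsion contribution assembles as a combination of ${}^cT^{\diamondsuit}$, ${}^cT^{\circ}$ and ${}^cT^{\heartsuit}$, and one must simplify this to recover the single term ${}^cT^{\heartsuit}$ appearing in the statement (respectively $\overline{{}^cT^{\heartsuit}}$ for the fourth Ricci). I expect this to be the main obstacle, and it is the only genuinely computational point: one works in a fixed local unitary frame and uses the antisymmetry ${}^cT^{k}_{ij} = -{}^cT^{k}_{ji}$ of the Chern torsion in its lower indices, together with the elementary relations that follow directly from the definitions in Notation~\ref{notation3313} (keeping track that ${}^cT^{\diamondsuit}$ and ${}^cT^{\circ}$ are real $(1,1)$--forms while ${}^cT^{\heartsuit}$ is not). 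Once the torsion terms are matched frame-by-frame and the $(1,1)$--forms reassembled, the remaining first-order and Laplacian-type terms agree by inspection, completing the proof.
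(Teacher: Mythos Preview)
Your overall approach is exactly what the paper intends: the corollary is stated without proof, as the immediate specialisation of Theorem~\ref{GauduchonRicciRelations} to $t=-1$ combined with the Liu--Yang relation for ${}^c\text{Ric}_\omega^{(2)}$ and the identities ${}^c\text{Ric}_\omega^{(3)}={}^c\text{Ric}_\omega^{(1)}-\partial\partial^\ast\omega$, ${}^c\text{Ric}_\omega^{(4)}={}^c\text{Ric}_\omega^{(1)}-\bar\partial\bar\partial^\ast\omega$. Your substitutions for ${}^b\text{Ric}_\omega^{(1)}$ and ${}^b\text{Ric}_\omega^{(2)}$ are carried out correctly.

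There is, however, a genuine gap in your plan for ${}^b\text{Ric}_\omega^{(3)}$ and ${}^b\text{Ric}_\omega^{(4)}$. Carrying out the substitutions exactly as you describe yields
\[
{}^b\text{Ric}_\omega^{(3)} \;=\; {}^b\text{Ric}_\omega^{(1)} + \partial\partial^\ast\omega - \sqrt{-1}\,\Lambda(\partial\bar\partial\omega) + \bigl({}^cT^{\circ} - {}^cT^{\diamondsuit} - {}^cT^{\heartsuit}\bigr),
\]
and the torsion combination ${}^cT^{\circ} - {}^cT^{\diamondsuit} - {}^cT^{\heartsuit}$ does \emph{not} collapse to ${}^cT^{\heartsuit}$ by antisymmetry of the Chern torsion alone: the three forms ${}^cT^{\circ}$, ${}^cT^{\diamondsuit}$, ${}^cT^{\heartsuit}$ are independent quadratic contractions (already in complex dimension two one checks, with only $T^1_{12},T^2_{12}$ present, that ${}^cT^{\circ}_{1\bar1}-{}^cT^{\diamondsuit}_{1\bar1}=|T^1_{12}|^2-|T^2_{12}|^2$ while $2\,{}^cT^{\heartsuit}_{1\bar1}=2|T^1_{12}|^2$). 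So the ``bookkeeping'' step you flag as the main obstacle cannot be completed with the tools you list; the discrepancy is not an artefact of frame choice. You should either leave the torsion contribution in the form ${}^cT^{\circ}-{}^cT^{\diamondsuit}-{}^cT^{\heartsuit}$ (respectively its conjugate version for ${}^b\text{Ric}_\omega^{(4)}$), or locate an additional identity beyond Notation~\ref{notation3313} that the printed statement is implicitly invoking.
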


\begin{cor}
Let $(X, \omega)$ be a balanced manifold.  The Bismut Ricci curvatures afford the relations \begin{eqnarray*}
{}^b \text{Ric}_{\omega}^{(1)} &=& {}^c \text{Ric}_{\omega}^{(1)}  \\
{}^b \text{Ric}_{\omega}^{(2)} &=& {}^b \text{Ric}_{\omega}^{(1)} + \sqrt{-1} \Lambda(\partial \bar{\partial} \omega) +{}^cT^\diamondsuit - 2{}^c T^{\circ} \\
{}^b \text{Ric}_{\omega}^{(3)} &=& {}^b \text{Ric}_{\omega}^{(4)} \ = \  {}^b \text{Ric}_{\omega}^{(1)} - \sqrt{-1} \Lambda(\partial \bar{\partial} \omega).
\end{eqnarray*}
\end{cor}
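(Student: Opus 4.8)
The plan is to read this off the immediately preceding corollary once we record what ``balanced'' buys us. Recall that a Hermitian metric $\omega$ on $X^n$ is balanced, i.e.\ $d\omega^{n-1}=0$, if and only if the torsion $(1,0)$-form $\tau$ (defined by $\partial\omega^{n-1}=\tau\wedge\omega^{n-1}$) vanishes; since $\omega$ is real and $\star\omega$ is a constant multiple of $\omega^{n-1}$, this is in turn equivalent to $d^{\ast}\omega=0$, hence to $\partial^{\ast}\omega=0$ and $\bar\partial^{\ast}\omega=0$ simultaneously, and in particular to $\partial\partial^{\ast}\omega=\bar\partial\bar\partial^{\ast}\omega=0$. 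So the first step is simply to substitute these vanishings into the four identities of the previous corollary. This collapses ${}^b\text{Ric}_{\omega}^{(1)}={}^c\text{Ric}_{\omega}^{(1)}-(\partial\partial^{\ast}\omega+\bar\partial\bar\partial^{\ast}\omega)$ to ${}^b\text{Ric}_{\omega}^{(1)}={}^c\text{Ric}_{\omega}^{(1)}$, and removes the $(\partial\partial^{\ast}\omega+\bar\partial\bar\partial^{\ast}\omega)$ term from the expression for ${}^b\text{Ric}_{\omega}^{(2)}$, leaving exactly the stated formula ${}^b\text{Ric}_{\omega}^{(2)}={}^b\text{Ric}_{\omega}^{(1)}+\sqrt{-1}\,\Lambda(\partial\bar\partial\omega)+{}^cT^{\diamondsuit}-2\,{}^cT^{\circ}$.

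The only point that is not pure bookkeeping is that the auxiliary $(1,1)$-form ${}^cT^{\heartsuit}=\sqrt{-1}\,{}^cT^j_{iq}\,\overline{{}^cT^k_{kq}}\,e^i\wedge\overline{e^j}$ — and hence also $\overline{{}^cT^{\heartsuit}}$ — vanishes on a balanced manifold. I would argue this as follows: in a unitary frame the components of $\bar\partial^{\ast}\omega$, equivalently the components of $\tau$, are, up to a universal constant, the traces $\sum_{k}{}^cT^k_{kq}$ of the Chern torsion (this is recorded in Liu--Yang \cite{LiuYangRicci}, from whose notation Notation \ref{notation3313} is drawn). Since $\bar\partial^{\ast}\omega=0$ in the balanced case, each trace $\sum_k {}^cT^k_{kq}$ vanishes, so $\overline{{}^cT^k_{kq}}=0$ for every $q$, and therefore ${}^cT^{\heartsuit}=\overline{{}^cT^{\heartsuit}}=0$.

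Finally I would feed $\partial\partial^{\ast}\omega=\bar\partial\bar\partial^{\ast}\omega=0$ together with ${}^cT^{\heartsuit}=\overline{{}^cT^{\heartsuit}}=0$ into the formulae for ${}^b\text{Ric}_{\omega}^{(3)}$ and ${}^b\text{Ric}_{\omega}^{(4)}$ from the previous corollary; the $\partial\partial^{\ast}\omega$, $\bar\partial\bar\partial^{\ast}\omega$, ${}^cT^{\heartsuit}$ and $\overline{{}^cT^{\heartsuit}}$ terms all drop, leaving ${}^b\text{Ric}_{\omega}^{(3)}={}^b\text{Ric}_{\omega}^{(4)}={}^b\text{Ric}_{\omega}^{(1)}-\sqrt{-1}\,\Lambda(\partial\bar\partial\omega)$, which is the last asserted relation. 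The only place requiring genuine input is the vanishing of ${}^cT^{\heartsuit}$, i.e.\ the identification of the trace of the Chern torsion with the torsion $1$-form $\tau$; I expect this to be the main (and rather minor) obstacle, everything else being direct substitution into the preceding corollary.
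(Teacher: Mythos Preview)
Your proposal is correct and matches the paper's approach: the corollary is stated without proof as an immediate specialization of the preceding Bismut formulae, and the only nontrivial input --- the vanishing of ${}^cT^{\heartsuit}$ on a balanced manifold via $\sum_k {}^cT^k_{kq}=\tau_q=0$ --- is exactly the argument the paper spells out in the proof of the next corollary (Corollary~\ref{BalCorGauRic}).
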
 

From the positive-definiteness of ${}^c T^{\circ}$, we have the following immediate corollary: 

\begin{cor}
Let $(X, \omega)$ be a balanced manifold.  \begin{itemize}
\item[(i)] ${}^b \text{Ric}_{\omega}^{(2)} \leq {}^b \text{Ric}_{\omega}^{(1)} + {}^cT^\diamondsuit + \sqrt{-1} \Lambda(\partial \bar{\partial} \omega)$ with equality if and only if $\omega$ is K\"ahler.
\item[(ii)] ${}^b \text{Ric}_{\omega}^{(3)} = {}^b \text{Ric}_{\omega}^{(1)}$ or ${}^b \text{Ric}_{\omega}^{(4)} = {}^b \text{Ric}_{\omega}^{(1)}$ if and only if $\omega$ is K\"ahler.
\end{itemize}
\end{cor}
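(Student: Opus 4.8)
The plan is to deduce everything from the previous corollary, which states that on a balanced manifold
\[
{}^b \text{Ric}_{\omega}^{(2)} = {}^b \text{Ric}_{\omega}^{(1)} + \sqrt{-1}\,\Lambda(\partial \bar{\partial}\omega) + {}^c T^{\diamondsuit} - 2\,{}^c T^{\circ}, \qquad {}^b \text{Ric}_{\omega}^{(3)} = {}^b \text{Ric}_{\omega}^{(4)} = {}^b \text{Ric}_{\omega}^{(1)} - \sqrt{-1}\,\Lambda(\partial \bar{\partial}\omega).
\]
For part (i), I would simply rearrange the first identity as ${}^b \text{Ric}_{\omega}^{(2)} = {}^b \text{Ric}_{\omega}^{(1)} + {}^c T^{\diamondsuit} + \sqrt{-1}\,\Lambda(\partial \bar{\partial}\omega) - 2\,{}^c T^{\circ}$, and invoke the stated positive-definiteness of ${}^c T^{\circ}$: since ${}^c T^{\circ} = \sqrt{-1}\,{}^c T^j_{sp}\,\overline{{}^c T^i_{sp}}\,e^i \wedge \overline{e^j} \geq 0$ pointwise as a Hermitian form, we get the asserted inequality, with equality at a point exactly when ${}^c T^{\circ} = 0$ there. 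The point is that ${}^c T^{\circ}$ vanishes identically iff all Chern torsion components vanish, i.e. iff $\omega$ is Kähler; I would note that ${}^c T^{\circ}(v,\overline{v}) = \sum_{s,p}\big|\sum_i {}^c T^i_{sp}\,\overline{v_i}\big|^2$ — actually more directly, $\operatorname{tr}_\omega {}^c T^{\circ} = |{}^c T|^2$ up to a positive constant, so ${}^c T^{\circ} \equiv 0 \iff {}^c T \equiv 0 \iff \omega$ Kähler.

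For part (ii), the second identity gives ${}^b \text{Ric}_{\omega}^{(3)} - {}^b \text{Ric}_{\omega}^{(1)} = {}^b \text{Ric}_{\omega}^{(4)} - {}^b \text{Ric}_{\omega}^{(1)} = -\sqrt{-1}\,\Lambda(\partial\bar{\partial}\omega)$. So ${}^b\text{Ric}^{(3)}_\omega = {}^b\text{Ric}^{(1)}_\omega$ (or the analogous statement with the fourth Ricci) holds iff $\Lambda(\partial\bar{\partial}\omega) = 0$. Here I would cite the fact, attributed to Liu--Yang in the excerpt, that a Hermitian metric with $\Lambda(\partial\bar{\partial}\omega) = 0$ is pluriclosed, i.e. $\partial\bar{\partial}\omega = 0$. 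A compact manifold that is simultaneously balanced and pluriclosed is Kähler — this is a theorem I would cite (it follows from $\partial\bar\partial\omega^{n-1}=0$ combined with $\partial\bar\partial\omega=0$ and a standard argument, e.g. integrating $\partial\bar\partial\omega \wedge \omega^{n-2}$ against itself, or appealing to the known result of Gauduchon/others). Conversely, if $\omega$ is Kähler then $\partial\bar\partial\omega = 0$ trivially and both identities reduce to ${}^b\text{Ric}^{(3)} = {}^b\text{Ric}^{(4)} = {}^b\text{Ric}^{(1)}$.

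\textbf{Main obstacle.} The computational content is entirely absorbed into the preceding corollary, so the only real subtlety is the "balanced + pluriclosed $\Rightarrow$ Kähler" step in part (ii), which genuinely uses compactness and is not a pointwise statement — I would need to make sure the hypothesis "balanced manifold" here tacitly includes compactness (as it does elsewhere in the paper), and cite the appropriate integration-by-parts argument or known result rather than reprove it. A minor secondary point is being careful that in part (i) the equality case is pointwise (equality of $(1,1)$-forms everywhere) while the Kähler conclusion is global, but since ${}^c T \equiv 0$ is forced pointwise by ${}^c T^\circ \equiv 0$, there is no gap. I would also double-check the sign and normalization of ${}^c T^\circ$ against Notation \ref{notation3313} to confirm it is the correct nonnegative Hermitian form; this is routine given the definition.
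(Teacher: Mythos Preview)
Your approach is correct and matches the paper's intent. The paper gives no proof beyond the one-line remark ``From the positive-definiteness of ${}^c T^{\circ}$, we have the following immediate corollary,'' which fully justifies part (i) exactly as you describe: subtract $2\,{}^cT^{\circ}\geq 0$ and observe that ${}^cT^{\circ}\equiv 0$ forces ${}^cT\equiv 0$. For part (ii) the paper offers no further argument; your route via the Liu--Yang implication $\Lambda(\partial\bar\partial\omega)=0 \Rightarrow \partial\bar\partial\omega=0$ followed by ``balanced $+$ pluriclosed $\Rightarrow$ K\"ahler'' is the natural completion and is in fact more explicit than what the paper provides.

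Your flagged obstacle is legitimate and worth stating: the balanced-plus-pluriclosed step (and, depending on how one reads Liu--Yang, possibly the $\Lambda(\partial\bar\partial\omega)=0$ step as well) is a global integration-by-parts argument requiring compactness, yet the corollary as stated omits that hypothesis. This appears to be a small oversight in the paper rather than in your reasoning---note that the very next corollary in the paper does insert ``compact,'' suggesting the authors had it in mind throughout this passage. Proceed as you propose, but make the compactness assumption explicit when you invoke those two results.
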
 

\begin{cor}\label{BalCorGauRic}
Let $(X, \omega)$ be a compact balanced manifold. Then the Gauduchon Ricci curvatures are given by \begin{eqnarray*}
{}^t \text{Ric}_{\omega}^{(1)} &=& {}^c \text{Ric}_{\omega}^{(1)}, \\
{}^t \text{Ric}_{\omega}^{(2)} &=& t {}^c \text{Ric}_{\omega}^{(2)} + (1-t) {}^c \text{Ric}_{\omega}^{(1)} + \frac{(1-t)^2}{4}\left({}^c T^{\diamondsuit} - {}^c T^{\circ}\right), \\
{}^t \text{Ric}_{\omega}^{(3)} &=& t {}^c \text{Ric}_{\omega}^{(3)} + \frac{(1-t)^2}{4} \left( {}^c \text{Ric}_{\omega}^{(1)} + {}^c \text{Ric}_{\omega}^{(2)} \right) - \frac{(1-t)^2}{4} {}^c T^{\diamondsuit}, \\
{}^t \text{Ric}_{\omega}^{(4)} &=& t {}^c \text{Ric}_{\omega}^{(4)} + \frac{(1-t)}{2} \left( {}^c \text{Ric}_{\omega}^{(1)} + {}^c \text{Ric}_{\omega}^{(2)} \right) - \frac{(1-t)^2}{4} {}^c T^{\diamondsuit}.
\end{eqnarray*}
\end{cor}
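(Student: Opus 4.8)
The plan is to read this off directly from Theorem~\ref{GauduchonRicciRelations} by specializing to the balanced case; the only real content is identifying which of the correction terms appearing there vanish when $\omega$ is balanced. Recall that $\omega$ being balanced (i.e. $d\omega^{n-1}=0$) is equivalent to the vanishing of the torsion $(1,0)$-form $\tau$, and in turn to $\partial^{\ast}\omega = \bar{\partial}^{\ast}\omega = 0$; this is classical (see the discussion surrounding Proposition~\ref{MainThm6} and the references there). First I would record these equivalences, noting in particular that $\partial\partial^{\ast}\omega = \bar{\partial}\bar{\partial}^{\ast}\omega = 0$ on a balanced manifold. (Compactness is not strictly needed for this step, since the formal adjoints are defined pointwise, but we keep the hypothesis as stated.)

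Next I would observe that the ``heart'' form vanishes in the balanced case. By Notation~\ref{notation3313}, ${}^cT^{\heartsuit} = \sqrt{-1}\,{}^cT^j_{iq}\,\overline{{}^cT^k_{kq}}\,e^i\wedge\overline{e^j}$, and the trace $\sum_k {}^cT^k_{kq}$ of the Chern torsion is, up to sign and the usual normalization, the $q$-th component of $\tau$; since $\tau\equiv 0$ on a balanced manifold we get ${}^cT^{\heartsuit}=0$, and hence also $\overline{{}^cT^{\heartsuit}}=0$.

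Substituting $\partial\partial^{\ast}\omega = \bar{\partial}\bar{\partial}^{\ast}\omega = 0$ and ${}^cT^{\heartsuit}=\overline{{}^cT^{\heartsuit}}=0$ into the four identities of Theorem~\ref{GauduchonRicciRelations} then yields the displayed formulae at once: the entire correction term in ${}^t\text{Ric}^{(1)}_{\omega}$ disappears, giving ${}^t\text{Ric}^{(1)}_{\omega} = {}^c\text{Ric}^{(1)}_{\omega}$ (consistent with Theorem~\ref{MainThm2}(iv)), while the $\partial\partial^{\ast}\omega+\bar{\partial}\bar{\partial}^{\ast}\omega$ and $\heartsuit$ contributions drop out of the remaining three, leaving only the Chern--Ricci terms together with the surviving ${}^cT^{\diamondsuit}$ and ${}^cT^{\circ}$ pieces. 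No genuine obstacle arises: Theorem~\ref{GauduchonRicciRelations} (built on Corollary~\ref{RicciRelationsCor} and Notation~\ref{notation3313}) has already absorbed all the computation, and the only point requiring a moment's attention is recognizing $\overline{{}^cT^k_{kq}}$ as a component of $\bar{\tau}$ so as to conclude ${}^cT^{\heartsuit}=0$.
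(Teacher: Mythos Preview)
Your proposal is correct and follows essentially the same route as the paper's proof: both arguments reduce to showing that on a balanced manifold $\partial\partial^{\ast}\omega + \bar\partial\bar\partial^{\ast}\omega = 0$ and ${}^cT^{\heartsuit}=0$, the latter via the vanishing of the trace $\sum_k {}^cT^k_{kq}$ (equivalently $\tau=0$), and then substitute into Theorem~\ref{GauduchonRicciRelations}. Your write-up is, if anything, slightly more explicit about the equivalence between balancedness, $\tau=0$, and the vanishing of $\partial^{\ast}\omega$, $\bar\partial^{\ast}\omega$ than the paper's own argument.
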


\begin{proof}
It suffices to show that if $\omega$ is balanced, then $\partial \partial^{\ast} \omega + \bar{\partial} \bar{\partial}^{\ast} \omega =0$ and ${}^c T^{\heartsuit}=0$. The first assertion is well-known (see, e.g., \cite[Lemma 2.3]{ChenChenNie}). For the latter claim,  we see that if the metric is balanced, then $${}^c T^{\heartsuit}_{k \overline{k}} \lambda_k^2 \ =\  \sum_{p,q,k} {}^c T_{kq}^k \overline{{}^c T_{qp}^p} \lambda_k^2 \ = \ \sum_{q,k} \lambda_k^2 {}^c T_{kq}^k \left( \sum_p \overline{{}^c T_{qp}^p} \right) \ =\ 0.$$
\end{proof}

Although we will focus on the scalar curvatures in a later section, it will be convenient to give the formal definitions here:

\begin{defn}
Let $(X, \omega)$ be a Hermitian manifold. The \textit{Gauduchon scalar curvature} ${}^t \text{Scal}_{\omega}$ and \textit{Gauduchon altered scalar curvature} ${}^t \widetilde{\text{Scal}}_{\omega}$ are defined by \begin{eqnarray*}
    {}^t \text{Scal}_{\omega} \ := \ \text{tr}_{\omega} \left( {}^t \text{Ric}_{\omega}^{(1)} \right) \ = \ \text{tr}_{\omega} \left( {}^t \text{Ric}_{\omega}^{(2)} \right), \qquad {}^t \widetilde{\text{Scal}}_{\omega} \ : = \ \text{tr}_{\omega} \left( {}^t \text{Ric}_{\omega}^{(3)} \right) \ = \ \text{tr}_{\omega} \left( {}^t \text{Ric}_{\omega}^{(4)} \right).
\end{eqnarray*}
\end{defn}

Immediate from \cite[Proposition 3.2]{ChenChenNie} is the following corollary: 

\begin{cor}
Let $(X, \omega)$ be a compact locally conformally K\"ahler manifold.  The first $t$--Gauduchon Ricci curvature is given by \begin{eqnarray*}
{}^t \text{Ric}_{\omega}^{(1)} &=& \frac{t(n-1) +1}{n} {}^c \text{Ric}_{\omega}^{(1)} + \frac{(t-1)(1-n)}{n} {}^c \text{Ric}_{\omega}^{(2)} + \frac{(t-1)}{n} \left( {}^c \text{Scal}_{\omega} - {}^c \widetilde{\text{Scal}}_{\omega} \right) \omega.
\end{eqnarray*}
\end{cor}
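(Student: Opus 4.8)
The plan is to specialise Theorem~\ref{GauduchonRicciRelations} to the locally conformally Kähler (LCK) case and substitute the Chern--Ricci identities of \cite[Proposition 3.2]{ChenChenNie}. By Theorem~\ref{GauduchonRicciRelations},
\[
{}^t \text{Ric}_{\omega}^{(1)} \ = \ {}^c \text{Ric}_{\omega}^{(1)} + \frac{t-1}{2}\left( \partial \partial^{\ast}\omega + \bar\partial\bar\partial^{\ast}\omega \right),
\]
so the whole task is to rewrite $\partial\partial^{\ast}\omega + \bar\partial\bar\partial^{\ast}\omega$, for a compact LCK metric, as a linear combination of ${}^c\text{Ric}_{\omega}^{(1)}$, ${}^c\text{Ric}_{\omega}^{(2)}$, and $\big({}^c\text{Scal}_{\omega} - {}^c\widetilde{\text{Scal}}_{\omega}\big)\omega$.

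First I would recall the LCK structure equation: there is a closed $1$-form $\theta$ (the Lee form) with $d\omega = \theta\wedge\omega$, and on a compact manifold one may normalise $\omega$ within its conformal class (Gauduchon's theorem) so that $\theta$ is co-closed; in any case the relations in \cite[Proposition 3.2]{ChenChenNie} express $\partial\partial^{\ast}\omega$, $\bar\partial\bar\partial^{\ast}\omega$, and the torsion-form terms in terms of $\theta$, $\omega$, and the Chern Ricci forms. Concretely, for an LCK metric the torsion $(1,0)$-form is $\tau = \tfrac{n-1}{?}\theta^{1,0}$ up to a universal constant, and ${}^c T^{\circ}$, ${}^c T^{\diamondsuit}$ collapse to scalar multiples of $\theta\otimes\bar\theta$ and $|\theta|^2\omega$. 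Using ${}^c\text{Ric}_{\omega}^{(3)} = {}^c\text{Ric}_{\omega}^{(1)} - \partial\partial^{\ast}\omega$ and ${}^c\text{Ric}_{\omega}^{(4)} = {}^c\text{Ric}_{\omega}^{(1)} - \bar\partial\bar\partial^{\ast}\omega$ (quoted just before Theorem~\ref{GauduchonRicciRelations}), together with Liu--Yang's identity ${}^c\text{Ric}_{\omega}^{(2)} = {}^c\text{Ric}_{\omega}^{(1)} - \sqrt{-1}\Lambda(\partial\bar\partial\omega) - (\partial\partial^{\ast}\omega + \bar\partial\bar\partial^{\ast}\omega) + {}^cT^{\circ}$, one can solve for $\partial\partial^{\ast}\omega + \bar\partial\bar\partial^{\ast}\omega$ once $\sqrt{-1}\Lambda(\partial\bar\partial\omega)$ and ${}^cT^{\circ}$ are known in the LCK case. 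The point of \cite[Proposition 3.2]{ChenChenNie} is precisely that in the LCK case $\sqrt{-1}\Lambda(\partial\bar\partial\omega)$, ${}^cT^\circ$, and $\partial\partial^{\ast}\omega + \bar\partial\bar\partial^{\ast}\omega$ all become explicit combinations of ${}^c\text{Ric}^{(1)}_\omega$, ${}^c\text{Ric}^{(2)}_\omega$, and $({}^c\text{Scal}_\omega - {}^c\widetilde{\text{Scal}}_\omega)\omega$; substituting these into the displayed formula for ${}^t\text{Ric}^{(1)}_\omega$ and collecting the coefficients of each term — which will be affine-linear in $t$ — yields exactly the asserted identity.

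The steps, in order, are: (1) quote Theorem~\ref{GauduchonRicciRelations}(i) to reduce to computing $\partial\partial^{\ast}\omega + \bar\partial\bar\partial^{\ast}\omega$; (2) invoke \cite[Proposition 3.2]{ChenChenNie} to express this $(1,1)$-form, in the compact LCK setting, via ${}^c\text{Ric}^{(1)}_\omega$, ${}^c\text{Ric}^{(2)}_\omega$ and $({}^c\text{Scal}_\omega - {}^c\widetilde{\text{Scal}}_\omega)\omega$; (3) substitute and regroup, checking that the coefficient of ${}^c\text{Ric}^{(1)}_\omega$ is $\tfrac{t(n-1)+1}{n}$, that of ${}^c\text{Ric}^{(2)}_\omega$ is $\tfrac{(t-1)(1-n)}{n}$, and that of $({}^c\text{Scal}_\omega - {}^c\widetilde{\text{Scal}}_\omega)\omega$ is $\tfrac{t-1}{n}$; (4) sanity-check at $t=1$, where the right side must collapse to ${}^c\text{Ric}^{(1)}_\omega$, which it does since the three non-trivial coefficients vanish and the ${}^c\text{Ric}^{(1)}_\omega$-coefficient becomes $1$. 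I expect the only real obstacle to be bookkeeping: correctly transcribing the constants from \cite[Proposition 3.2]{ChenChenNie} (which are stated for the Chern connection and whose normalisations of $\theta$, $\tau$, and $\partial^{\ast}$ must be matched to ours) and confirming that all torsion-quadratic terms ${}^cT^\diamondsuit$, ${}^cT^\circ$, ${}^cT^\heartsuit$ that appear in Theorem~\ref{GauduchonRicciRelations} reorganise, in the LCK case, into the three stated tensors with no leftover. Since the statement says this is ``immediate'' from \cite[Proposition 3.2]{ChenChenNie}, the proof is essentially a one-line substitution, and I would present it as such rather than re-deriving the LCK Ricci identities.
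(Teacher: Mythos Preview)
Your approach is correct and matches the paper's: the corollary is stated as ``immediate from \cite[Proposition 3.2]{ChenChenNie}'', and your plan is precisely to substitute that proposition's LCK expression for $\partial\partial^{\ast}\omega + \bar\partial\bar\partial^{\ast}\omega$ into the first line of Theorem~\ref{GauduchonRicciRelations}. One simplification: since ${}^t\text{Ric}_{\omega}^{(1)}$ in Theorem~\ref{GauduchonRicciRelations} contains no torsion-quadratic terms ${}^cT^{\diamondsuit}$, ${}^cT^{\circ}$, ${}^cT^{\heartsuit}$, your worry in step (4) about reorganising those is unnecessary here, and no Gauduchon normalisation of $\theta$ is needed either --- the substitution really is a one-liner.
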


In particular, \begin{eqnarray*}
{}^b \text{Ric}_{\omega}^{(1)} &=& \frac{(2-n)}{n} {}^c \text{Ric}_{\omega}^{(1)} + \frac{2(n-1)}{n} {}^c \text{Ric}_{\omega}^{(2)} - \frac{2}{n} \left( {}^c \text{Scal}_{\omega} - {}^c \widetilde{\text{Scal}_{\omega}} \right) \omega, \\
{}^{\frac{1}{1-n}} \text{Ric}_{\omega}^{(1)} &=& {}^c \text{Ric}_{\omega}^{(2)} + \frac{1}{1-n} \left( {}^c \text{Scal}_{\omega} - {}^c \widetilde{\text{Scal}}_{\omega} \right) \omega.
\end{eqnarray*}

\begin{cor}\label{LcKScalRelation}
Let $(X, \omega)$ be a compact locally conformally K\"ahler surface. Then \begin{eqnarray*}
    {}^b \text{Ric}_{\omega}^{(1)} &=& {}^c \text{Ric}_{\omega}^{(2)} + \left( {}^c \widetilde{\text{Scal}}_{\omega} - {}^c \text{Scal}_{\omega} \right) \omega.
\end{eqnarray*}
Hence, \begin{itemize}
    \item[(i)] if ${}^b \text{Ric}_{\omega}^{(1)}={}^c \text{Ric}_{\omega}^{(2)}$, the metric is K\"ahler;
    \item[(ii)] if ${}^c \text{Ric}_{\omega}^{(2)}=0$, the metric has pointwise constant first Bismut Ricci curvature, with the (pointwise) Einstein constant ${}^c \widetilde{\text{Scal}}_{\omega} - {}^c \text{Scal}_{\omega}$;
    \item[(iii)] if ${}^b \text{Ric}_{\omega}^{(1)}=0$, the metric has pointwise constant second Chern Ricci curvature, with the (pointwise) Einstein constant ${}^c \text{Scal}_{\omega} - {}^c \widetilde{\text{Scal}}_{\omega}$.
\end{itemize}
\end{cor}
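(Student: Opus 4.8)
The plan is to obtain the displayed identity as the $n=2$ specialization of the locally conformally K\"ahler formula for ${}^b\text{Ric}_{\omega}^{(1)}$ recorded immediately above, and then to read off (ii) and (iii) by inspection, leaving (i) as the only part requiring a genuine argument.

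First I would substitute $n=2$ into
$${}^b\text{Ric}_{\omega}^{(1)} \ = \ \frac{2-n}{n}\,{}^c\text{Ric}_{\omega}^{(1)} \,+\, \frac{2(n-1)}{n}\,{}^c\text{Ric}_{\omega}^{(2)} \,-\, \frac{2}{n}\big({}^c\text{Scal}_{\omega} - {}^c\widetilde{\text{Scal}}_{\omega}\big)\omega.$$
Since $\frac{2-n}{n}=0$, $\frac{2(n-1)}{n}=1$ and $-\frac{2}{n}=-1$ when $n=2$, the first Chern Ricci term disappears, leaving ${}^b\text{Ric}_{\omega}^{(1)} = {}^c\text{Ric}_{\omega}^{(2)} + \big({}^c\widetilde{\text{Scal}}_{\omega} - {}^c\text{Scal}_{\omega}\big)\omega$, which is the asserted formula. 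From this, (ii) and (iii) follow by inspection: if ${}^c\text{Ric}_{\omega}^{(2)}=0$, the formula reads ${}^b\text{Ric}_{\omega}^{(1)} = \big({}^c\widetilde{\text{Scal}}_{\omega} - {}^c\text{Scal}_{\omega}\big)\omega$, which says precisely that ${}^b\text{Ric}_{\omega}^{(1)}$ is a (pointwise varying) scalar multiple of $\omega$, with pointwise factor ${}^c\widetilde{\text{Scal}}_{\omega} - {}^c\text{Scal}_{\omega}$; symmetrically, ${}^b\text{Ric}_{\omega}^{(1)}=0$ rearranges to ${}^c\text{Ric}_{\omega}^{(2)} = \big({}^c\text{Scal}_{\omega} - {}^c\widetilde{\text{Scal}}_{\omega}\big)\omega$, which is (iii).

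For (i), the hypothesis ${}^b\text{Ric}_{\omega}^{(1)} = {}^c\text{Ric}_{\omega}^{(2)}$ together with the formula forces $\big({}^c\widetilde{\text{Scal}}_{\omega} - {}^c\text{Scal}_{\omega}\big)\omega \equiv 0$, hence ${}^c\text{Scal}_{\omega} = {}^c\widetilde{\text{Scal}}_{\omega}$ at every point. I would then take $\text{tr}_{\omega}$ of the Liu--Yang identity ${}^c\text{Ric}_{\omega}^{(3)} = {}^c\text{Ric}_{\omega}^{(1)} - \partial\partial^{\ast}\omega$ recalled above, obtaining the pointwise relation ${}^c\text{Scal}_{\omega} - {}^c\widetilde{\text{Scal}}_{\omega} = \text{tr}_{\omega}(\partial\partial^{\ast}\omega)$, and integrate it over the compact surface: using $\int_X \text{tr}_{\omega}(\partial\partial^{\ast}\omega)\,\text{dvol}_{\omega} = \langle \partial\partial^{\ast}\omega,\omega\rangle_{L^2} = \|\partial^{\ast}\omega\|_{L^2}^2$, the left-hand side vanishes and we conclude $\partial^{\ast}\omega = 0$; taking conjugates also gives $\bar{\partial}^{\ast}\omega=0$, so $d^{\ast}\omega=0$. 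Thus $\omega$ is coclosed, equivalently $\omega^{n-1}$ is closed (i.e. $\omega$ is balanced), and since $n=2$ this says $d\omega=0$, so $\omega$ is K\"ahler.

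The only genuine obstacle is this last step: passing from the pointwise coincidence ${}^c\text{Scal}_{\omega}={}^c\widetilde{\text{Scal}}_{\omega}$ to K\"ahlerity. The delicate point is that ${}^c\text{Scal}_{\omega} - {}^c\widetilde{\text{Scal}}_{\omega}$ is \emph{not} visibly sign-definite pointwise; it is only after integrating against the volume form --- invoking the adjointness of $\partial^{\ast}$, i.e. Stokes' theorem --- that it is exhibited as the nonnegative quantity $\|\partial^{\ast}\omega\|_{L^2}^2$. Hence both compactness and the dimension-two coincidence ``balanced $\Leftrightarrow$ K\"ahler'' are essential, and one should not expect (i) to hold without them.
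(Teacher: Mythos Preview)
Your proof is correct and follows the approach implicit in the paper: the displayed identity is the $n=2$ specialization of the Bismut formula listed just above the corollary, and (ii), (iii) are immediate.

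For (i), your integration argument via $\text{tr}_\omega(\partial\partial^\ast\omega)$ is valid, but the paper has a slightly more direct route available. Later in the paper (in the proof of the Kobayashi--Wu extension) the identity ${}^c\text{Scal}_\omega - {}^c\widetilde{\text{Scal}}_\omega = d^\ast\tau + |\tau|^2$ is recalled, where $\tau$ is the Chern torsion $(1,0)$--form. Thus ${}^c\text{Scal}_\omega = {}^c\widetilde{\text{Scal}}_\omega$ gives $d^\ast\tau + |\tau|^2 = 0$ pointwise; integrating over the compact $X$ yields $\int_X |\tau|^2\,\omega^n = 0$, hence $\tau \equiv 0$, i.e.\ $\omega$ is balanced, and in complex dimension $2$ balanced is K\"ahler. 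This avoids having to identify $\text{tr}_\omega(\partial\partial^\ast\omega)$ explicitly, though of course the two computations are equivalent (since $\partial^\ast\omega$ and $\tau$ differ only by a constant factor). Either way, you have correctly identified that compactness and the dimension-two balanced--K\"ahler coincidence are both essential for (i).
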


\subsection*{3.2. Applications}
Liu--Yang \cite{LiuYangRicci} gave a very systematic study of the relationship between the Chern Ricci curvature ${}^c \text{Ric}_{\omega}^{(k)}$ and the Lichnerowicz Ricci curvature ${}^0 \text{Ric}_{\omega}^{(k)}$.  Note that they refer to the Lichnerowicz Ricci curvatures as the Levi-Civita Ricci curvatures, which they denote by $\mathfrak{R}^{(k)}$. We will see that many of their results on the Lichnerowicz Ricci forms hold for many other Gauduchon connections.  \\

Let us remind the reader that a compact complex manifold $X$ is said to be in the \textit{Fujiki class $\mathcal{C}$} if $X$ is bimeromorphic to a compact K\"ahler manifold. 

\begin{cor}
Let $(X, \omega)$ be a compact Hermitian manifold.  If ${}^t \text{Ric}_{\omega}^{(1)}>0$ or ${}^t \text{Ric}_{\omega}^{(1)}<0$ for some $t \in \mathbb{R}$, then $X$ is in the Fujiki class $\mathcal{C}$ if and only if $X$ is K\"ahler. \end{cor}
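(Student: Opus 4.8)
One direction is immediate: a compact Kähler manifold trivially lies in Fujiki class $\mathcal{C}$. For the converse, let $X$ be a compact manifold in Fujiki class $\mathcal{C}$ carrying a Hermitian metric $\omega$ for which ${}^t\text{Ric}_\omega^{(1)}$ is definite. Replacing ${}^t\text{Ric}_\omega^{(1)}$ by its negative if it is negative definite (this merely interchanges the roles of $K_X$ and $K_X^{-1}$), put $\rho := \pm\,{}^t\text{Ric}_\omega^{(1)}$, a strictly positive real $(1,1)$-form. By Theorem \ref{MainThm2}, ${}^t\text{Ric}_\omega^{(1)}$ represents a class in $H_A^{1,1}(X)$, so in particular $\partial\bar\partial\rho = 0$; thus $\rho$ is a pluriclosed metric, and the task is to show that $X$ is Kähler.

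The one structural fact I use about Fujiki class $\mathcal{C}$ is that every such $X$ satisfies the $\partial\bar\partial$-lemma (bimeromorphic invariance of the $\partial\bar\partial$-lemma, together with its validity on compact Kähler manifolds). Combined with $\partial\bar\partial\rho = 0$, this tames the torsion of $\rho$: the $(2,1)$-form $\partial\rho$ is $\bar\partial$-closed, since $\bar\partial\partial\rho = -\partial\bar\partial\rho = 0$, and it is $\partial$-exact, so by the $\partial\bar\partial$-lemma $\partial\rho = \sqrt{-1}\,\partial\bar\partial\gamma$ for some smooth $(1,0)$-form $\gamma$ on $X$.

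Now suppose, for contradiction, that $X$ is not Kähler. By Harvey--Lawson's intrinsic characterization of the Kähler property, $X$ then carries a non-zero positive current $T$ of bidimension $(1,1)$ (bidegree $(n-1,n-1)$) that is a boundary, $T = dR$; splitting $R$ into bidegree components, the pure type of $T$ forces $T = \partial S + \bar\partial\overline{S}$ for a current $S$ of bidegree $(n-2,n-1)$ with $\bar\partial S = 0$ and $\partial\overline{S} = 0$. Pairing $T$ against $\rho$, positivity yields $\int_X\rho\wedge T > 0$, since $\rho$ is strictly positive and $T\neq 0$ is a positive current. On the other hand, integrating by parts on the compact $X$ (all boundary terms vanishing for bidegree reasons) and substituting $\partial\rho = \sqrt{-1}\,\partial\bar\partial\gamma$,
\[
\int_X\rho\wedge\partial S \ = \ \pm\int_X\partial\rho\wedge S \ = \ \pm\sqrt{-1}\int_X\partial\bar\partial\gamma\wedge S \ = \ \pm\sqrt{-1}\int_X\gamma\wedge\bar\partial\partial S \ = \ 0,
\]
because $\bar\partial\partial S = -\partial\bar\partial S = -\partial(\bar\partial S) = 0$; by the conjugate computation (using $\partial\overline{S} = 0$) one likewise gets $\int_X\rho\wedge\bar\partial\overline{S} = 0$, and hence $\int_X\rho\wedge T = 0$. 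This contradicts the positivity, so $X$ is Kähler.

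The substance of the argument is confined to the last step: extracting the constraints $\bar\partial S = \partial\overline{S} = 0$ from the fact that $T$ is a boundary of pure type, and justifying the current-level integrations by parts. The $\partial\bar\partial$-lemma is used exactly once — to replace $\partial\rho$ by $\sqrt{-1}\,\partial\bar\partial\gamma$ — and this is the only place where the hypothesis $X\in\mathcal{C}$ enters; it is indispensable, since the conclusion genuinely fails without it, as Hopf surfaces carry pluriclosed metrics yet are not Kähler.
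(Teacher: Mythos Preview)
Your reduction matches the paper's: since ${}^t\text{Ric}_\omega^{(1)}$ is $\partial\bar\partial$-closed (Theorem~\ref{GauduchonRicciRelations}), its sign gives a pluriclosed metric $\rho$, and the task becomes ``Fujiki $+$ pluriclosed $\Rightarrow$ K\"ahler''. The paper simply cites Chiose's theorem for this implication; you attempt to reprove it via the $\partial\bar\partial$-lemma and Harvey--Lawson.

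There is a genuine gap in your Harvey--Lawson step. The Harvey--Lawson criterion produces a non-zero positive $(n-1,n-1)$-current $T$ which is the $(n-1,n-1)$-\emph{component} of a boundary $dR$, not a boundary itself. Writing $S = R^{(n-2,n-1)}$ one does get $T = \partial S + \bar\partial\bar S$, but there is no reason whatsoever for $\bar\partial S = 0$, nor even for the weaker $\partial\bar\partial S = 0$: indeed $\partial\bar\partial S = -\bar\partial T$, and Harvey--Lawson does not provide a closed $T$. Your integration by parts reduces $\int_X\rho\wedge\partial S$ precisely to a pairing against $\bar\partial\partial S$, so without $\bar\partial T = 0$ the argument does not close. (Even if $T = dR$ held literally, for $n\ge 3$ the pure-type condition only forces $\bar\partial S = -\partial R^{(n-3,n)}$, which does yield $\partial\bar\partial S = 0$ --- so in that hypothetical case your argument is salvageable --- but Harvey--Lawson does not give $T = dR$.) Chiose's actual proof goes through Popovici's result that Fujiki manifolds carry strongly Gauduchon metrics and a direct integral identity, and does not rely on a Harvey--Lawson current in this way.
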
 \begin{proof}
Theorem \ref{GauduchonRicciRelations} shows that for all $t \in \mathbb{R}$, the first Gauduchon Ricci curvature is $\partial \bar{\partial}$--closed.  Hence, if ${}^t \text{Ric}_{\omega}^{(1)}$ is definite,  we can produce a pluriclosed metric by setting $\omega_t : = {}^t \text{Ric}_{\omega}^{(1)}$ or $\omega_t :=-{}^t \text{Ric}_{\omega}^{(1)}$, depending on the sign of ${}^t \text{Ric}_{\omega}^{(1)}$.  By a theorem of Chiose \cite{Chiose},  a compact complex manifold in the Fujiki class $\mathcal{C}$ admits a pluriclosed metric if and only if it is K\"ahler.
\end{proof}

\begin{defn}
Let $X$ be a complex manifold.  \begin{itemize}
\item[(i)] The \textit{Bott--Chern cohomology groups} are defined \begin{eqnarray*}
H_{\text{BC}}^{p,q}(X) & : = & \frac{\{ \alpha \in \Omega_X^{p,q} : d \alpha =0 \}}{\{ \sqrt{-1} \partial \bar{\partial} \beta : \beta \in \Omega_X^{p-1,q-1} \}}.
\end{eqnarray*}
\item[(ii)] The \textit{Aeppli cohomology groups} are defined \begin{eqnarray*}
H_{\text{A}}^{p,q}(X) & : = &  \frac{\{ \alpha \in \Omega_X^{p,q} : \partial \bar{\partial} \alpha =0 \}}{\{ \partial \gamma + \bar{\partial} \gamma : \gamma \in \Omega_X^{p-1, q-1} \}}.
\end{eqnarray*}
\end{itemize}
\end{defn}

Recall that if $h$ is a Hermitian metric on a holomorphic line bundle $\mathcal{L} \to X$, the first Chern class $c_1(\mathcal{L})$ is represented by $\frac{\sqrt{-1}}{2\pi} \partial \bar{\partial} \log(h)$ in $H_{\text{DR}}^2(X,\mathbb{R})$.  We can similarly define the first Bott--Chern and first Aeppli--Chern classes:

\begin{defn}
Let $(\mathcal{L},h) \to X$ be a Hermitian line bundle over a complex manifold $X$.  The \textit{first Bott--Chern class} $c_1^{\text{BC}}(\mathcal{L})$ (respectively, \textit{first Aeppli--Chern class} $c_1^{\text{AC}}(\mathcal{L})$) is the cohomology class in $H_{\text{BC}}^{1,1}(X)$ (respectively, $H_A^{1,1}(X)$) represented by the curvature form $\Theta^{(\mathcal{L},h)} = \frac{\sqrt{-1}}{2\pi} \partial \bar{\partial} \log(h)$.  
\end{defn}

\begin{rmk}
The first Bott--Chern and first Aeppli--Chern classes are well-defined, independent of the specific choice of Hermitian metric.  Indeed, if $\tilde{h}$ is another Hermitian metric on $\mathcal{L} \to X$,  then the difference of the curvature forms $\Theta^{(\mathcal{L},\tilde{h})} - \Theta^{(\mathcal{L},h)} = \frac{\sqrt{-1}}{2\pi} \partial \bar{\partial} \log \left( \frac{\tilde{h}}{h} \right)$ is globally $\partial \bar{\partial}$--exact. 
\end{rmk}

\begin{rmk}
Liu--Yang \cite{LiuYangRicci} defined the first Aeppli Chern class to be the cohomology class represented by the first Lichernowicz Ricci curvature.  The above result indicates that the choice of $t=0$ is unnecessary, the first Gauduchon Ricci curvature ${}^t \text{Ric}_{\omega}^{(1)}$  represents the same cohomology class. 
\end{rmk}

From Theorem \ref{GauduchonRicciRelations}, we observe that the results in \cite{LiuYangRicci} can be generalized substantially. For instance, we have the following extension of \cite[Theorem 3.14]{LiuYangRicci}:

\begin{thm}
Let $(X, \omega)$ be a Hermitian manifold.  The first Gauduchon Ricci form ${}^t \text{Ric}_{\omega}^{(1)}$ represents $c_1^{\text{AC}}(K_X^{-1}) \in H_A^{1,1}(X)$ for all $t \in \mathbb{R}$.  Moreover, \begin{itemize}
\item[(i)] ${}^t \text{Ric}_{\omega}^{(1)}$ is $d$--closed if and only if $\partial \bar{\partial} \bar{\partial}^{\ast}\omega=0$ or $t = 1$.
\item[(ii)] If $\bar{\partial} \partial^{\ast} \omega = 0$, then ${}^t \text{Ric}_{\omega}^{(1)}$ represents the $c_1(K_X^{-1}) \in H_{\text{DR}}^2(X,\mathbb{R})$, i.e.,  $c_1(K_X^{-1})=c_1^{\text{AC}}(K_X^{-1})$.
\item[(iii)] If $\omega$ is conformally balanced, then ${}^t \text{Ric}_{\omega}^{(1)}$ represents $c_1(K_X^{-1}) \in H_{\bar{\partial}}^{1,1}(X)$ and also the first Bott--Chern class $c_1^{\text{BC}}(K_X^{-1}) \in H_{\text{BC}}^{1,1}(X)$. 
\item[(iv)] ${}^t \text{Ric}_{\omega}^{(1)} = {}^s \text{Ric}_{\omega}^{(1)}$ for $t \neq s$ if and only if $\omega$ is balanced.
\end{itemize} \end{thm}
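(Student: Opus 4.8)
The whole statement flows from the first line of Theorem~\ref{GauduchonRicciRelations}, which I abbreviate as
\[
{}^t \text{Ric}_{\omega}^{(1)} \ = \ {}^c \text{Ric}_{\omega}^{(1)} \, + \, \tfrac{t-1}{2}\,\eta, \qquad \eta \ := \ \partial \partial^{\ast} \omega + \bar{\partial} \bar{\partial}^{\ast} \omega ,
\]
so that everything reduces to understanding the $(1,1)$-form $\eta$, which is real because $\overline{\partial \partial^{\ast}\omega} = \bar{\partial}\bar{\partial}^{\ast}\omega$. For the first assertion, ${}^c \text{Ric}_{\omega}^{(1)}$ is the curvature of the metric induced by $\omega$ on $K_X^{-1}$, hence represents $c_1^{\text{AC}}(K_X^{-1})$ (as well as $c_1^{\text{BC}}$ and $c_1$) by the definitions recalled above; while $\eta = \partial(\partial^{\ast}\omega) + \bar{\partial}(\bar{\partial}^{\ast}\omega)$, with $\partial^{\ast}\omega \in \Omega_X^{0,1}$ and $\bar{\partial}^{\ast}\omega \in \Omega_X^{1,0}$, is the sum of a $\partial$-exact and a $\bar{\partial}$-exact form, hence trivial in Aeppli cohomology. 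Therefore ${}^t\text{Ric}_{\omega}^{(1)}$ represents $c_1^{\text{AC}}(K_X^{-1})$ for every $t$.

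For (i), since $d\,{}^c\text{Ric}_{\omega}^{(1)} = 0$ we have $d\,{}^t\text{Ric}_{\omega}^{(1)} = \tfrac{t-1}{2}\,d\eta$; as $\eta$ is a real $(1,1)$-form, $d\eta = \partial\eta + \overline{\partial\eta}$ vanishes iff $\partial\eta = 0$, and since $\partial\partial^{\ast}\omega$ is $\partial$-exact, $\partial\eta = \partial\bar{\partial}\bar{\partial}^{\ast}\omega$. Thus ${}^t\text{Ric}_{\omega}^{(1)}$ is $d$-closed iff $t = 1$ or $\partial\bar{\partial}\bar{\partial}^{\ast}\omega = 0$. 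For (ii), if $\bar{\partial}\partial^{\ast}\omega = 0$ then conjugating gives $\partial\bar{\partial}^{\ast}\omega = 0$, whence $\partial\bar{\partial}\bar{\partial}^{\ast}\omega = -\bar{\partial}(\partial\bar{\partial}^{\ast}\omega) = 0$ and ${}^t\text{Ric}_{\omega}^{(1)}$ is $d$-closed by (i); moreover the $(2,0)$- and $(0,2)$-parts of $dd^{\ast}\omega$ are $\partial\bar{\partial}^{\ast}\omega$ and $\bar{\partial}\partial^{\ast}\omega$, both zero, while its $(1,1)$-part is exactly $\eta$, so $\eta = dd^{\ast}\omega$ is $d$-exact and ${}^t\text{Ric}_{\omega}^{(1)}$ is de Rham cohomologous to ${}^c\text{Ric}_{\omega}^{(1)}$, i.e.\ represents $c_1(K_X^{-1})$ in $H_{\text{DR}}^2(X,\mathbb{R})$.

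For (iii), suppose $e^{f}\omega$ is balanced with $f$ real. Scaling $\partial\omega^{n-1} = \tau\wedge\omega^{n-1}$ shows the torsion $(1,0)$-form is $\partial$-exact, $\tau = \partial h$ with $h = -(n-1)f$ real. Combining this with the standard relation $d^{\ast}\omega = \pm\sqrt{-1}(\tau - \bar{\tau})$ — equivalently $\partial^{\ast}\omega = c\,\bar{\tau}$, $\bar{\partial}^{\ast}\omega = \bar{c}\,\tau$ for a nonzero purely imaginary constant $c$ — gives $\eta = (c - \bar{c})\,\partial\bar{\partial}h = \sqrt{-1}\,\partial\bar{\partial}g$ for a real function $g$. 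Being $\sqrt{-1}\partial\bar{\partial}$ of a real function, $\eta$ is Bott--Chern trivial, so ${}^t\text{Ric}_{\omega}^{(1)}$ represents $c_1^{\text{BC}}(K_X^{-1})$; and since $\eta = \bar{\partial}(-\sqrt{-1}\,\partial g)$ is also $\bar{\partial}$-exact, ${}^t\text{Ric}_{\omega}^{(1)}$ represents $c_1(K_X^{-1}) \in H_{\bar{\partial}}^{1,1}(X)$.

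For (iv), the formula gives ${}^t\text{Ric}_{\omega}^{(1)} - {}^s\text{Ric}_{\omega}^{(1)} = \tfrac{t-s}{2}\,\eta$, so for $t \neq s$ equality holds iff $\eta = 0$; if $\omega$ is balanced then $\partial^{\ast}\omega = \bar{\partial}^{\ast}\omega = 0$, so $\eta = 0$. For the converse I would pair $\eta = 0$ with $\omega$ and use compactness of $X$: by adjointness $0 = \int_X \langle \eta, \omega\rangle\, dV_{\omega} = \|\partial^{\ast}\omega\|_{L^2}^2 + \|\bar{\partial}^{\ast}\omega\|_{L^2}^2$, forcing $\partial^{\ast}\omega = 0$, i.e.\ $\omega$ balanced. (Compactness appears to be the intended standing hypothesis here: on a noncompact manifold $\eta$ can vanish for non-balanced $\omega$, e.g.\ $e^{\mathrm{Re}\,z_1}\,\omega_{\mathrm{euc}}$ on $\mathbb{C}^n$, $n \geq 2$.) Beyond type bookkeeping and the single formula of Theorem~\ref{GauduchonRicciRelations}, the only genuine inputs are the identity $d^{\ast}\omega = \pm\sqrt{-1}(\tau - \bar{\tau})$ used in (iii) and the integration by parts in (iv); I expect the latter — together with noticing that a compactness hypothesis is needed for the converse of (iv) — to be the only subtle point.
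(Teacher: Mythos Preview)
Your argument is correct and, for the opening assertion and parts (i)--(ii), coincides with the paper's proof essentially line for line. Two points of genuine divergence are worth noting.

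For (iii), the paper computes directly how the Christoffel symbols (hence $\bar\partial^{\ast}\omega$) transform under the conformal change $\omega \mapsto e^{u}\omega$, obtaining $\bar\partial^{\ast}_u\omega_u = \bar\partial^{\ast}\omega + \sqrt{-1}(n-1)\partial u$ and hence $\eta = 2(n-1)\sqrt{-1}\,\partial\bar\partial u$. Your route via the torsion $(1,0)$--form --- observing that conformally balanced forces $\tau = \partial h$ and then feeding this into $\bar\partial^{\ast}\omega \propto \tau$ --- is cleaner and avoids any coordinate computation, at the cost of invoking the identity $d^{\ast}\omega = \pm\sqrt{-1}(\tau - \bar\tau)$ as a black box. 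Both land on the same conclusion that $\eta$ is $\sqrt{-1}\partial\bar\partial$--exact.

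For (iv), you are in fact more careful than the paper. The paper simply asserts that $\eta = 0$ forces $\partial\partial^{\ast}\omega = 0$ and $\bar\partial\bar\partial^{\ast}\omega = 0$ and thence that $\omega$ is balanced, without justification. Your integration-by-parts argument on a compact $X$ is the honest way to close this, and your observation that compactness is genuinely needed (with the conformal-to-flat example on $\mathbb{C}^n$) is correct: for $\omega = e^{\operatorname{Re} z_1}\omega_{\mathrm{euc}}$ one has $\tau$ with constant coefficients, so $\partial\partial^{\ast}\omega = \bar\partial\bar\partial^{\ast}\omega = 0$ while $\partial^{\ast}\omega \neq 0$. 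So your caveat is not pedantry but identifies a missing hypothesis in the statement as written.
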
 \begin{proof}
Since ${}^c \text{Ric}_{\omega}^{(1)} = -\sqrt{-1} \partial \bar{\partial} \log(\omega^n)$,  we see that \begin{eqnarray*}
d({}^t \text{Ric}_{\omega}^{(1)}) &=& \frac{(t-1)}{2} \left( \bar{\partial} \partial \partial^{\ast} \omega + \partial \bar{\partial} \bar{\partial}^{\ast} \omega \right).
\end{eqnarray*}
Decomposing the equation $d\left( {}^t \text{Ric}_{\omega}^{(1)} \right)$ into parts, proves (i).  For statement (ii),  if $\bar{\partial} \partial^{\ast} \omega =0$, then $${}^t \text{Ric}_{\omega}^{(1)} \ = \ {}^c \text{Ric}_{\omega}^{(1)} + \frac{(t-1)}{2} dd^{\ast} \omega.$$ Therefore, $\left[ {}^t \text{Ric}_{\omega}^{(1)} \right] = \left[ {}^c \text{Ric}_{\omega}^{(1)} \right]$ in $H_{\text{DR}}^2(X,\mathbb{R})$. 

Assume now that $\omega$ is conformally balanced, i.e., there is a smooth function $u$ such that $\omega_u : = e^u \omega$ is balanced.  The Christoffel symbols $\Gamma_u$ of $\omega_u$ are given in any local frame by \begin{eqnarray*}
(\Gamma_u)_{\overline{i} j}^k &=& \frac{1}{2} e^{-u} g^{k \overline{\ell}} \left( \frac{\partial}{\partial \bar{z}_i}(e^u g_{j \overline{\ell}}) - \frac{\partial}{\partial \bar{z}_{\ell}}(e^u g_{j \overline{i}}) \right)  \ = \ \Gamma_{\overline{i} j}^k + \frac{1}{2} \left( \delta_{jk} u_{\overline{i}} - g^{k \overline{\ell}} g_{j \overline{i}} u_{\overline{\ell}} \right).
\end{eqnarray*}
Therefore, \begin{eqnarray*}
(\Gamma_u)_{\overline{i} k}^k &=& \Gamma_{\overline{i} k}^k + \frac{n-1}{2} u_{\overline{i}},
\end{eqnarray*}
and hence,  $\bar{\partial}_u^{\ast} \omega_u = \bar{\partial}^{\ast} \omega + \sqrt{-1} (n-1) \partial u$.  Differentiating this expression gives \begin{eqnarray*}
\bar{\partial} \bar{\partial}_u^{\ast} \omega_u &=& \bar{\partial} \bar{\partial}^{\ast} \omega  - (n-1) \sqrt{-1} \partial \bar{\partial}u, \hspace*{1cm} \partial \bar{\partial}_u^{\ast} \omega_u \ = \ \partial \bar{\partial}^{\ast} \omega.
\end{eqnarray*}
Since $\omega_u$ is balanced,  $\bar{\partial}_u^{\ast} \omega_u=0$, and we have \begin{eqnarray*}
\partial \partial^{\ast} \omega + \bar{\partial} \bar{\partial}^{\ast} \omega &=& 2(n-1) \sqrt{-1} \partial \bar{\partial} u.
\end{eqnarray*}
In particular,  ${}^t \text{Ric}_{\omega}^{(1)} = {}^c \text{Ric}_{\omega}^{(1)} - (n-1) \sqrt{-1} \partial \bar{\partial}u$,  which proves (iii).  

If ${}^t \text{Ric}_{\omega}^{(1)} = {}^s \text{Ric}_{\omega}^{(1)}$, then \begin{eqnarray*}
\frac{(t-1)}{2} (\partial \partial^{\ast} \omega + \bar{\partial} \bar{\partial}^{\ast} \omega) &=&  \frac{(s-1)}{2} (\partial \partial^{\ast} \omega + \bar{\partial} \bar{\partial}^{\ast} \omega).
\end{eqnarray*}
Since $s \neq t$, we have $\partial \partial^{\ast} \omega=0$ and $\bar{\partial}\bar{\partial}^{\ast}\omega=0$. Hence, $\omega$ is balanced.
\end{proof}

An immediate consequence of the definition is that $$c_1^{\text{BC}}(\mathcal{L}) =0 \ \implies \ c_1(\mathcal{L}) =0 \ \implies \ c_1^{\text{AC}}(\mathcal{L}) =0.$$ If $X$ supports the $\partial \bar{\partial}$--lemma, then the converse implications hold \cite{LiuYangRicci}: 

\begin{prop}\label{ddbarChernClassProp}
Let $\mathcal{L} \to X$ be a holomorphic line bundle over a complex manifold $X$. If the $\partial \bar{\partial}$--lemma holds on $X$ (e.g., if $X$ is Moishezon or in the Fujiki class $\mathcal{C}$), then \begin{eqnarray*}
c_1^{\text{BC}}(\mathcal{L}) =0 \ \iff \ c_1(\mathcal{L}) =0 \ \iff \ c_1^{\text{AC}}(\mathcal{L}) =0.
\end{eqnarray*} \end{prop}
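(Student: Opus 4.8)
The plan is to take as given the two forward implications $c_1^{\mathrm{BC}}(\mathcal{L})=0 \Rightarrow c_1(\mathcal{L})=0 \Rightarrow c_1^{\mathrm{AC}}(\mathcal{L})=0$, which were observed above to follow immediately from the definitions and require no hypothesis on $X$; it then remains only to close the loop and show that, under the $\partial\bar\partial$--lemma, $c_1^{\mathrm{AC}}(\mathcal{L})=0$ implies $c_1^{\mathrm{BC}}(\mathcal{L})=0$. Fix a Hermitian metric $h$ on $\mathcal{L}$ and let $\Theta = \Theta^{(\mathcal{L},h)} = \tfrac{\sqrt{-1}}{2\pi}\partial\bar\partial\log h$ be its curvature form, a real, $d$--closed $(1,1)$--form which represents all three classes at once. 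So the goal is to produce a smooth real function $f$ with $\Theta = \sqrt{-1}\,\partial\bar\partial f$.

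First I would unwind the hypothesis $c_1^{\mathrm{AC}}(\mathcal{L}) = 0$. By the definition of Aeppli cohomology there are forms $u \in \Omega_X^{0,1}$ and $v \in \Omega_X^{1,0}$ with $\Theta = \partial u + \bar\partial v$; averaging with the complex conjugate (using $\overline{\Theta} = \Theta$) rewrites this as $\Theta = \partial\beta + \bar\partial\bar\beta$ for a single $\beta \in \Omega_X^{0,1}$. Next I would feed in $d\Theta = 0$: comparing bidegrees, the $(2,1)$-- and $(1,2)$--components of $d\Theta$ read $\partial\bar\partial\bar\beta = 0$ and $\bar\partial\partial\beta = 0$, which are complex conjugates of one another, so the content is the single relation $\partial\bar\partial\beta = 0$. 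The key observation is then that the $(1,1)$--form $\gamma := \partial\beta$ is simultaneously $d$--closed — indeed $\partial\gamma = \partial\partial\beta = 0$ and $\bar\partial\gamma = \bar\partial\partial\beta = 0$ — and manifestly $\partial$--exact.

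Now the $\partial\bar\partial$--lemma applies in the avatar ``a $d$--closed, $\partial$--exact form is $\partial\bar\partial$--exact'', producing $\varphi \in \mathcal{C}^{\infty}(X,\mathbb{C})$ with $\gamma = \partial\bar\partial\varphi$. Therefore $\Theta = \gamma + \overline{\gamma} = \partial\bar\partial\varphi - \partial\bar\partial\bar\varphi = \partial\bar\partial(\varphi - \bar\varphi) = 2\sqrt{-1}\,\partial\bar\partial(\operatorname{Im}\varphi)$, so $f := 2\operatorname{Im}\varphi$ is a real function with $\Theta = \sqrt{-1}\,\partial\bar\partial f$, which means exactly that $\Theta$ represents $0$ in $H_{\mathrm{BC}}^{1,1}(X)$, i.e. $c_1^{\mathrm{BC}}(\mathcal{L}) = 0$. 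Combined with the forward chain this yields the three-way equivalence. (Alternatively one could invoke the standard fact that the $\partial\bar\partial$--lemma makes the natural maps $H_{\mathrm{BC}}^{1,1}(X) \to H_{\mathrm{dR}}^{2}(X,\mathbb{C}) \to H_A^{1,1}(X)$ isomorphisms and read the statement off from injectivity of the composite, but the computation above is self-contained.)

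I do not expect a genuine obstacle here: the argument is pure bidegree bookkeeping with no analytic input beyond the $\partial\bar\partial$--lemma itself. The only points demanding a little care are (i) symmetrising so that $\beta$, and ultimately the potential $f$, are of the correct type and real-valued, and (ii) checking that $\gamma = \partial\beta$ is actually $d$--closed — which is precisely where the hypothesis $d\Theta = 0$ (equivalently, that $\Theta$ is a curvature form) enters — so that the $\partial\bar\partial$--lemma may be invoked in its usual form.
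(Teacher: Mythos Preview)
Your argument is correct. The paper does not actually supply a proof of this proposition: it merely records the forward chain $c_1^{\mathrm{BC}}=0 \Rightarrow c_1=0 \Rightarrow c_1^{\mathrm{AC}}=0$ as immediate from the definitions and then attributes the converse, under the $\partial\bar\partial$--lemma, to \cite{LiuYangRicci} without further detail. Your self-contained proof --- symmetrising to write $\Theta = \partial\beta + \bar\partial\bar\beta$, extracting $\partial\bar\partial\beta = 0$ from $d\Theta = 0$, and then applying the $\partial\bar\partial$--lemma to the $d$--closed, $\partial$--exact form $\gamma = \partial\beta$ --- is clean and fills this gap; the alternative you sketch (injectivity of $H_{\mathrm{BC}}^{1,1} \to H_A^{1,1}$ on $\partial\bar\partial$--manifolds) is the more structural way to see it and is likely what the cited reference has in mind.
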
 

\begin{ex}
Let $X = \mathbb{S}^{3} \times \mathbb{S}^1$ be the Hopf surface.  From the Kunneth formula,  the second Betti number $b_2(X)=0$, and therefore, $c_1(\mathcal{L})=c_1^{\text{AC}}(\mathcal{L})=0$ for any holomorphic line bundle $\mathcal{L} \to X$.  We claim that $c_1^{\text{BC}}(K_X^{-1}) \neq 0$. Let $\omega_0$ be the Boothby metric $$\omega_0 : = \sqrt{-1} \frac{\delta_{ij}}{| z |^2} dz_i \wedge d\overline{z}_j$$ on the Hopf surface $\mathbb{S}^3 \times \mathbb{S}^1$. The first Chern Ricci curvature is easily computed to be $${}^c \text{Ric}_{\omega_0}^{(1)} \ = \ \frac{1}{| z |^2} \left( \delta_{ij} - \frac{\overline{z}_i z_j}{| z |^2} \right) \sqrt{-1} dz_i \wedge d\overline{z}_j.$$ By the Cauchy--Schwarz inequality, ${}^c \text{Ric}_{\omega_0}^{(1)} \geq 0$.  This implies $c_1^{\text{BC}}(K_X^{-1}) \neq 0$.  Indeed, if $c_1^{\text{BC}}(K_X^{-1})=0$, then there exists a function $u$ such that ${}^c \text{Ric}_{\omega_0}^{(1)} = \sqrt{-1} \partial \bar{\partial} u \geq 0$. But this would force $u$ to be constant, which is an obvious contradiction.  These observations were made by Tosatti in \cite[Example 3.3]{TosattiNonKahlerCY}.  
\end{ex}

\begin{ex}
(\cite[Example 3.5]{TosattiNonKahlerCY}). Another example of this type is given by the hypothetical complex structure on the six-sphere $\mathbb{S}^6$.  Since $b_2(\mathbb{S}^6)=0$, it is clear that $c_1(\mathcal{L})= c_1^{\text{AC}}(\mathcal{L})=0$ for any holomorphic line bundle on $\mathbb{S}^6$. On the other hand, if $c_1^{\text{BC}}(K_X)=0$, then $K_X$ is holomorphically torsion.  Since $H^1(\mathbb{S}^6, \mathbb{Z}) = H^3(\mathbb{S}^6, \mathbb{Z}) =0$, the exponential sequence implies that $\text{Pic}(\mathbb{S}^6) \simeq H^1(\mathbb{S}^6, \mathcal{O}_{\mathbb{S}^6})$. Therefore, $K_X$ is holomorphically trivial, and there is a non-vanishing holomorphic $3$--form $\alpha$ on $\mathbb{S}^6$.  It is clear that $\alpha$ is $d$--closed, and therefore must be $d$--exact. Writing $\alpha = d\beta$, we see that \begin{eqnarray*}
0 \ < \ (\sqrt{-1})^9 \int_{\mathbb{S}^6} \alpha \wedge \overline{\alpha} \ = \ (\sqrt{-1})^9 \int_{\mathbb{S}^6} d(\beta \wedge d\overline{\beta}) \ = \ 0,
\end{eqnarray*}

an obvious contradiction.
\end{ex}

\begin{rmk}
From Proposition \ref{ddbarChernClassProp}, we deduce that the Hopf surface $\mathbb{S}^3 \times \mathbb{S}^1$ and any hypothetical complex structure on $\mathbb{S}^6$ does not support the $\partial \bar{\partial}$--lemma.
\end{rmk}

The following result was established by Correa \cite[Lemma 3.1]{Correa} in the special case of the first Lichnerowicz Ricci curvature: 

\begin{prop}\label{LcKChernClassLemma}
Let $(X,\omega)$ be a compact locally conformally K\"ahler manifold. Then $${}^t \text{Ric}_{\omega}^{(1)} \ = \ \alpha + \frac{t(1-n)-1}{2} d(J\vartheta),$$ where $\alpha$ is a representative of $2\pi c_1(K_X^{-1})$, the underlying complex structure is denoted by $J$, and $\vartheta$ denotes the Lee form. In particular, on a compact locally conformally K\"ahler manifold, if ${}^t \text{Ric}_{\omega}^{(1)}=0$ then $c_1(K_X)=0$ in $H_{\text{DR}}^2(X,\mathbb{R})$.
\end{prop}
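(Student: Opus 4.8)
The plan is to read the formula off from Theorem~\ref{GauduchonRicciRelations}, which gives
\[
{}^t\text{Ric}_{\omega}^{(1)} \ = \ {}^c\text{Ric}_{\omega}^{(1)} \ + \ \frac{t-1}{2}\bigl(\partial\partial^{\ast}\omega + \bar\partial\bar\partial^{\ast}\omega\bigr),
\]
and then to identify the correction term with a multiple of $d(J\vartheta)$ using the locally conformally K\"ahler hypothesis. Put $\alpha := {}^c\text{Ric}_{\omega}^{(1)} + \tfrac{n}{2}\, d(J\vartheta)$; this is a global closed $(1,1)$-form (both summands are $d$-closed, the second exact) which represents $2\pi c_1(K_X^{-1})$, since ${}^c\text{Ric}_{\omega}^{(1)} = -\sqrt{-1}\,\partial\bar\partial\log(\omega^n)$ does and $d(J\vartheta)$ is exact. (Geometrically, $\alpha$ is the common Ricci form of the local K\"ahler metrics $e^{-f}\omega$ with $df=\vartheta$: one has ${}^c\text{Ric}_{e^{-f}\omega}^{(1)} = {}^c\text{Ric}_{\omega}^{(1)} + n\sqrt{-1}\,\partial\bar\partial f$ and $\sqrt{-1}\,\partial\bar\partial f = \tfrac12 d(J\vartheta)$, the latter because $d\vartheta=0$ forces $d(J\vartheta) = 2\sqrt{-1}\,\partial\vartheta^{0,1}$, and on overlaps two choices of $f$ differ by locally constant functions, so the local Ricci forms glue.) With this $\alpha$, the claimed formula is equivalent to the identity
\[
\partial\partial^{\ast}\omega + \bar\partial\bar\partial^{\ast}\omega \ = \ (1-n)\, d(J\vartheta)
\]
on an LCK manifold (compactness is not needed for this).

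I would prove the identity in three steps. First, since $\bar\partial^{\ast}\omega$ has type $(1,0)$ and $\partial^{\ast}\omega$ has type $(0,1)$, comparing types in $dd^{\ast}\omega = (\partial+\bar\partial)(\partial^{\ast}\omega+\bar\partial^{\ast}\omega)$ shows that $\partial\partial^{\ast}\omega + \bar\partial\bar\partial^{\ast}\omega$ is exactly the $(1,1)$-component of $dd^{\ast}\omega$. Second, a standard computation with the Hodge star --- using $\ast\omega = \omega^{n-1}/(n-1)!$ together with $d\omega^{n-1} = (n-1)\,\vartheta\wedge\omega^{n-1}$ --- gives $d^{\ast}\omega = -(n-1)\,J\vartheta$. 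Third, closedness of $\vartheta$ forces the $(2,0)$- and $(0,2)$-parts of $d(J\vartheta)$ to vanish, so $d(J\vartheta)$ is of pure type $(1,1)$; hence
\[
\partial\partial^{\ast}\omega + \bar\partial\bar\partial^{\ast}\omega \ = \ \bigl(dd^{\ast}\omega\bigr)^{(1,1)} \ = \ -(n-1)\,\bigl(d(J\vartheta)\bigr)^{(1,1)} \ = \ (1-n)\,d(J\vartheta).
\]
Combining this with Theorem~\ref{GauduchonRicciRelations},
\[
{}^t\text{Ric}_{\omega}^{(1)} \ = \ {}^c\text{Ric}_{\omega}^{(1)} + \frac{(t-1)(1-n)}{2}\,d(J\vartheta) \ = \ \alpha + \frac{t(1-n)-1}{2}\,d(J\vartheta),
\]
which is the asserted formula; and if ${}^t\text{Ric}_{\omega}^{(1)}=0$ for some $t\in\mathbb{R}$, then $\alpha = -\tfrac{t(1-n)-1}{2}\,d(J\vartheta)$ is exact, so $2\pi c_1(K_X^{-1}) = [\alpha] = 0$ in $H_{\text{DR}}^2(X,\mathbb{R})$, i.e.\ $c_1(K_X)=0$.

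The only genuine obstacle I anticipate is the bookkeeping of constants and signs --- chiefly pinning down $d^{\ast}\omega = -(n-1)J\vartheta$ exactly, together with the normalization of ``$J\vartheta$'' relative to the paper's convention $d^{c}=\sqrt{-1}(\bar\partial-\partial)$ --- since any slip rescales or re-signs the coefficient of $d(J\vartheta)$. Three cross-checks keep this honest: the formula must collapse to ${}^1\text{Ric}_{\omega}^{(1)}={}^c\text{Ric}_{\omega}^{(1)}$ at $t=1$; it must reduce at $t=0$ to Correa's formula \cite[Lemma~3.1]{Correa} for the first Lichnerowicz Ricci curvature; and it must agree with the earlier locally conformally K\"ahler corollary expressing ${}^t\text{Ric}_{\omega}^{(1)}$ in terms of ${}^c\text{Ric}_{\omega}^{(1)}$, ${}^c\text{Ric}_{\omega}^{(2)}$ and the Chern scalar curvatures (equivalently, that corollary independently determines $\partial\partial^{\ast}\omega+\bar\partial\bar\partial^{\ast}\omega$ on an LCK manifold).
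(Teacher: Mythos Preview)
Your proposal is correct and follows essentially the same route as the paper: both invoke Theorem~\ref{GauduchonRicciRelations} and the identity $\partial\partial^{\ast}\omega + \bar\partial\bar\partial^{\ast}\omega = -(n-1)\,d(J\vartheta)$, then identify $\alpha = {}^c\text{Ric}_{\omega}^{(1)} + \tfrac{n}{2}\,d(J\vartheta)$ with the local Ricci forms $-\tfrac12 dd^c\log(\omega_0^n)$ of the K\"ahler representatives. The only difference is that the paper simply cites \cite[Lemma~3.1]{Correa} for both the local Chern--Ricci formula and the identity, whereas you supply a self-contained derivation of the latter via $d^{\ast}\omega = -(n-1)J\vartheta$ and the fact that $d(J\vartheta)$ is pure $(1,1)$.
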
 \begin{proof}
Write $\omega = e^f \omega_0$ on a suitably small open set $\mathcal{U} \subset X$, where $\omega_0$ is a K\"ahler metric defined on $\mathcal{U}$ and $f : \mathcal{U} \to \mathbb{R}$ is smooth. From the proof of \cite[Lemma 3.1]{Correa}, we know that the first Chern Ricci curvature is locally written as ${}^c \text{Ric}_{\omega}^{(1)} = - \frac{1}{2} dd^c \log(\omega_0^n) - \frac{n}{2} d(J\vartheta)$, and moreover, $\partial \partial^{\ast} \omega + \bar{\partial} \bar{\partial}^{\ast} \omega = - (n-1) d(J \vartheta)$. Hence, the first $t$--Gauduchon Ricci curvature is locally given by \begin{eqnarray*}
{}^t \text{Ric}_{\omega}^{(1)} &=& - \frac{1}{2} dd^c \log(\omega_0^n) - \frac{n}{2} d(J \vartheta) - \frac{(t-1)}{2}(n-1) d(J \vartheta) \\
&=& - \frac{1}{2} dd^c \log(\omega_0^n) + \frac{t(1-n)-1}{2} d(J\vartheta).
\end{eqnarray*}
The local expressions $- \frac{1}{2} dd^c \log(\omega_0^n)$ glue together to define a representative $\alpha$ of $2\pi c_1(K_X^{-1})$ in $H_{\text{DR}}^2(X,\mathbb{R})$. Hence, ${}^t \text{Ric}_{\omega}^{(1)}$ represents $2\pi c_1(K_X^{-1})$.
\end{proof}

The above result yields an obstruction to the existence of a locally conformally K\"ahler structure. It is well-known that obstructions to locally conformally K\"ahler structures are few and far between. In more detail, we have:

\begin{thm}\label{LcKCor}
Let $(X,\omega)$ be a compact Hermitian manifold with ${}^t \text{Ric}_{\omega}^{(1)}=0$ for some $t \in \mathbb{R}$. If $c_1(K_X) \neq 0$ in $H_{\text{DR}}^2(X,\mathbb{R})$, then $X$ is not locally conformally K\"ahler.
\end{thm}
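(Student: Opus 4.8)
The plan is to obtain Theorem~\ref{LcKCor} as an essentially immediate consequence of Proposition~\ref{LcKChernClassLemma}, arguing by contradiction. I would suppose, toward a contradiction, that $X$ is locally conformally K\"ahler; then there is a globally defined Lee form $\vartheta$ on $X$, and Proposition~\ref{LcKChernClassLemma} applies to give
\[
{}^t \text{Ric}_{\omega}^{(1)} \ = \ \alpha + \frac{t(1-n)-1}{2}\, d(J\vartheta),
\]
where $\alpha$ represents $2\pi c_1(K_X^{-1})$ in $H_{\text{DR}}^2(X,\mathbb{R})$. Since $J\vartheta$ is a globally defined $1$-form, the term $d(J\vartheta)$ is exact, so ${}^t \text{Ric}_{\omega}^{(1)}$ also represents $2\pi c_1(K_X^{-1})$ in de Rham cohomology. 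The hypothesis ${}^t \text{Ric}_{\omega}^{(1)} = 0$ then forces $c_1(K_X^{-1}) = 0$, hence $c_1(K_X) = 0$ in $H_{\text{DR}}^2(X,\mathbb{R})$, contradicting the assumption $c_1(K_X)\neq 0$. This is the entire argument.

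I would add two structural remarks. First, the case $t = 1$ requires no locally conformally K\"ahler hypothesis at all: ${}^1\text{Ric}_{\omega}^{(1)} = {}^c\text{Ric}_{\omega}^{(1)} = -\sqrt{-1}\,\partial \bar{\partial}\log(\omega^n)$ always represents $2\pi c_1(K_X^{-1})$ in de Rham cohomology, so ${}^c\text{Ric}_{\omega}^{(1)}=0$ directly yields $c_1(K_X)=0$; thus Proposition~\ref{LcKChernClassLemma} is only genuinely needed for $t\neq 1$, where it rests on the local computation relating ${}^c\text{Ric}^{(1)}_\omega$ and $\partial \partial^{\ast}\omega + \bar{\partial}\bar{\partial}^{\ast}\omega$ to the Lee form. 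Second, I would point out why the cheaper route via Aeppli--Chern classes does not suffice: Theorem~\ref{MainThm2} only says ${}^t\text{Ric}_{\omega}^{(1)}$ represents $c_1^{\text{AC}}(K_X^{-1})$, and deducing $c_1(K_X)=0$ from $c_1^{\text{AC}}(K_X^{-1})=0$ would require the $\partial \bar{\partial}$-lemma (Proposition~\ref{ddbarChernClassProp}), which fails on typical locally conformally K\"ahler manifolds such as Hopf surfaces. It is precisely the stronger conclusion of Proposition~\ref{LcKChernClassLemma} — that on an lcK manifold the discrepancy between ${}^t\text{Ric}_{\omega}^{(1)}$ and a representative of $2\pi c_1(K_X^{-1})$ is not merely $\partial \bar{\partial}$-exact but honestly $d$-exact — that places the obstruction in $H_{\text{DR}}^2(X,\mathbb{R})$.

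Accordingly, I do not expect a real obstacle in this proof: all the content has been front-loaded into Proposition~\ref{LcKChernClassLemma} (and ultimately into the formula of Theorem~\ref{GauduchonRicciRelations}), and what remains is a two-line contrapositive. The only point I would be careful to state explicitly is that $d(J\vartheta)$ is globally exact, so that it drops out of the de Rham class; this is immediate from the global definition of the Lee form on a locally conformally K\"ahler manifold, but it is the one ingredient that genuinely uses the lcK hypothesis rather than just the curvature identity.
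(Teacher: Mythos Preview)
Your proposal is correct and matches the paper's approach exactly: the paper does not give a separate proof of Theorem~\ref{LcKCor}, stating it immediately after Proposition~\ref{LcKChernClassLemma} with the remark that ``the above result yields an obstruction to the existence of a locally conformally K\"ahler structure,'' and the last sentence of that proposition already records the contrapositive you spell out. Your additional remarks (on the $t=1$ case and on why the Aeppli--Chern route is insufficient) are sound elaborations not present in the paper, but the core argument is identical.
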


\begin{rmk}\label{relaxAeppli}
One cannot relax the assumption of the above result to the vanishing of the first Aeppli Chern class $c_1^{\text{AC}} =0$ in $H_A^{1,1}(X)$. Indeed, it was shown by He--Liu--Yang \cite{HeLiuYang} that a compact complex surface with ${}^l \text{Ric}_{\omega}^{(1)}=0$ is minimal and is one of the following: an Enriques surface, a bi-elliptic surface, a K3 surface, a torus, or a Hopf surface. This result should be compared with Tosatti's result \cite{TosattiNonKahlerCY} compact complex surfaces with ${}^c \text{Ric}_{\omega}^{(1)}=0$. Indeed, in this case, $X$ is minimal and is one of the following: an Enriques surface, a bi-elliptic surface, a K3 surface, a torus, or a Kodaira surface. In particular, a Kodaira surface admits a first Chern Ricci-flat metric and satisfies $c_1^{\text{BC}}(K_X)=0$ (and hence, $c_1(K_X)=0$ and $c_1^{\text{AC}}(K_X)=0$), but does not support a first Lichnerowicz Ricci-flat metric.
\end{rmk}

With Corollary \ref{LcKCor} in mind, we pose the following\footnote{The first named author would like to thank Xiaokui Yang for valuable communications regarding this question.}:

\begin{q}
Do there exist compact complex manifolds with $c_1^{\text{AC}}(K_X)=0$ but $c_1(K_X) \neq 0$?
\end{q}

\subsection*{3.3. Gauduchon Ricci-flat Metrics}

Liu--Yang \cite{LiuYangRicci} constructed first Lichnerowicz Ricci-flat metrics on the Hopf manifolds $\mathbb{S}^{2n-1} \times \mathbb{S}^1$. Their construction was extended for all $t$-Gauduchon metrics with $t < 1$ by Wang--Yang \cite{WangYang}, showing that the Hopf manifolds support first Gauduchon Ricci-flat metrics for all $t \in \mathbb{R} \backslash \{ 1 \}$. Before proving Theoren \ref{MainThm4}, we recall the proof of \cite[Theorem 1.11]{WangYang}. Recall that the universal cover of the Hopf manifold $\mathbb{S}^{2n-1} \times \mathbb{S}^1$ is $\mathbb{C}^n \backslash \{ 0 \}$. In particular, the Euclidean metric on $\mathbb{C}^n \backslash \{ 0 \}$ induces a metric on $\mathbb{S}^{2n-1} \times \mathbb{S}^1$ which we refer to as the \textit{standard metric} (or \textit{Boothby metric}). In local coordinates $(z_1, ..., z_n)$, the standard metric is given by $$\omega_0 \ : = \ 4 \sqrt{-1} \sum_k \frac{dz_k \wedge d\overline{z}_k}{| z |^2}.$$ 

\begin{thmfix}[{\cite[Theorem 1.11]{WangYang}}]
Let $(X, \omega_0)$ be the Hopf manifold $X = \mathbb{S}^{2n-1} \times \mathbb{S}^1$ endowed with the standard metric $\omega_0$.  The $(1,1)$--form $$\omega \ : = \ \omega_0 + \frac{4t(1-n)-4}{n} {}^l \text{Ric}_{\omega}^{(1)}$$ is a solution of the equation $${}^t \text{Ric}_{\omega}^{(1)} \ = \ 0, \hspace{1cm} \text{for all} \  t \in \mathbb{R} \backslash \{ 1 \}.$$ Moreover,  for all $t < 1$, the $(1,1)$--form $\omega$ is positive-definite and thus defines a first Gauduchon Ricci-flat metric.
\end{thmfix}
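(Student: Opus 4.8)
The plan is to exhibit $\omega$ explicitly on the universal cover $\mathbb{C}^n\setminus\{0\}$, where every form in sight is invariant under the defining $z\mapsto\lambda z$ action and under $\mathrm{U}(n)$, hence descends to $X=\mathbb{S}^{2n-1}\times\mathbb{S}^1$. Write $\phi:=\log|z|^2$ and set
\[
\beta \ := \ \sqrt{-1}\,\partial\bar\partial\phi, \qquad \beta_{i\bar j}\ =\ e^{-\phi}\delta_{ij}-e^{-2\phi}\,\bar z_i z_j,
\]
the pullback of the Fubini--Study form under $X\to\mathbb{P}^{n-1}$; it is $d$--closed and satisfies $0\le\beta\le\tfrac14\omega_0$ as real $(1,1)$--forms by Cauchy--Schwarz. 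Two facts about the Boothby metric (i.e. $g_0=4e^{-\phi}\,\mathrm{Id}$) are needed: (a) it is locally conformally K\"ahler with Lee form $\vartheta_0=-d\phi$ (one has $d\omega_0=\vartheta_0\wedge\omega_0$ and $d\vartheta_0=0$); and (b) $\det(g_0)=4^n e^{-n\phi}$, so ${}^c\text{Ric}_{\omega_0}^{(1)}=-\sqrt{-1}\partial\bar\partial\log\det(g_0)=n\beta$. Taking $c=t=0$ in the computation below gives ${}^l\text{Ric}_{\omega_0}^{(1)}=\beta$, so the form in the statement is $\omega=\omega_0+\tfrac{4t(1-n)-4}{n}\beta$.

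First I would study, for an arbitrary real parameter $c>-4$, the $(1,1)$--form $\omega_c:=\omega_0+c\beta$ and record three things. \emph{(i) Positivity:} $\beta\le\tfrac14\omega_0$ gives $\omega_c\ge\tfrac{4+c}{4}\,\omega_0>0$, so $\omega_c$ is a Hermitian metric. \emph{(ii) Chern--Ricci invariance:} $g_c=e^{-\phi}\big[(4+c)\,\mathrm{Id}-c\,e^{-\phi}\,\bar z\otimes z\big]$ is a rank--one perturbation with $\det(g_c)=4(4+c)^{n-1}e^{-n\phi}$, so $\log\det(g_c)-\log\det(g_0)$ is constant and ${}^c\text{Ric}_{\omega_c}^{(1)}=n\beta$, \emph{independently of $c$}. \emph{(iii) $\omega_c$ is again lcK}, with Lee form $\vartheta_c=\tfrac{4}{4+c}\vartheta_0$: indeed $d\omega_c=d\omega_0=\vartheta_0\wedge\omega_0$, a short computation gives $\vartheta_0\wedge\beta=\tfrac14\,\vartheta_0\wedge\omega_0$, whence $\vartheta_c\wedge\omega_c=\tfrac{4}{4+c}\big(1+\tfrac c4\big)\vartheta_0\wedge\omega_0=d\omega_c$, and $\vartheta_c$ is closed.

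With this the conclusion is a single substitution. The proof of Proposition \ref{LcKChernClassLemma} contains the identity $\partial\partial^{*}\omega+\bar\partial\bar\partial^{*}\omega=-(n-1)\,d(J\vartheta)$ valid on any lcK manifold; combined with $d(J\vartheta_0)=-dd^c\phi=-2\beta$ this yields
\[
\partial\partial^{*}\omega_c+\bar\partial\bar\partial^{*}\omega_c\ =\ -(n-1)\,d(J\vartheta_c)\ =\ -\tfrac{4(n-1)}{4+c}\,d(J\vartheta_0)\ =\ \frac{8(n-1)}{4+c}\,\beta.
\]
Feeding this and ${}^c\text{Ric}_{\omega_c}^{(1)}=n\beta$ into Theorem \ref{GauduchonRicciRelations} gives
\[
{}^t\text{Ric}_{\omega_c}^{(1)}\ =\ {}^c\text{Ric}_{\omega_c}^{(1)}+\frac{t-1}{2}\big(\partial\partial^{*}\omega_c+\bar\partial\bar\partial^{*}\omega_c\big)\ =\ \Big(n+\frac{4(t-1)(n-1)}{4+c}\Big)\beta,
\]
which vanishes precisely when $4+c=\tfrac{4(n-1)(1-t)}{n}$, i.e. $c=\tfrac{4(n-1)(1-t)-4n}{n}=\tfrac{4t(1-n)-4}{n}$ --- exactly the coefficient in the statement. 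For this value $4+c=\tfrac{4(n-1)(1-t)}{n}$, which (as $n\ge2$) is positive iff $t<1$; so $\omega:=\omega_c$ solves ${}^t\text{Ric}_\omega^{(1)}=0$ for every $t\in\mathbb{R}\setminus\{1\}$ and is in addition a positive--definite Hermitian metric --- a genuine first $t$--Gauduchon Ricci--flat metric --- exactly when $t<1$.

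The one step carrying real content is (iii): verifying $\omega_c$ stays lcK and pinning down its Lee form, which comes down to the pointwise identity $\vartheta_0\wedge\beta=\tfrac14\,\vartheta_0\wedge\omega_0$. Everything else is bookkeeping --- the rank--one determinant computation, Cauchy--Schwarz for $\beta\le\tfrac14\omega_0$, and the final linear solve for $c$ in terms of $t$. Alternatively, one could bypass Proposition \ref{LcKChernClassLemma} and compute $\partial^{*}\omega_c$, $\bar\partial^{*}\omega_c$ --- equivalently the contracted Chern torsion $\sum_q{}^cT_{iq}^q$ and the torsion forms ${}^cT^{\diamondsuit},{}^cT^{\circ},{}^cT^{\heartsuit}$ of Notation \ref{notation3313} --- directly from $g_c^{-1}=\tfrac{e^{\phi}}{4+c}\big[\mathrm{Id}+\tfrac c4\,e^{-\phi}\,\bar z\otimes z\big]$, the factor $\tfrac1{4+c}$ being the source of the $c$--dependence above; the $\mathrm{U}(n)$-- and scaling--invariance of all forms, besides legitimizing the descent to $X$, explains a posteriori why ${}^c\text{Ric}_{\omega_0}^{(1)}$, ${}^l\text{Ric}_{\omega_0}^{(1)}$ and $\partial\partial^{*}\omega_c+\bar\partial\bar\partial^{*}\omega_c$ are all proportional to $\beta$.
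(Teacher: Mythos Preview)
Your proof is correct and follows the same overall route as the paper: parametrise $\omega_c=\omega_0+c\beta$ (your $c$ is the paper's $4\lambda$, your $\beta$ is the paper's $\alpha$), feed the two ingredients ${}^c\text{Ric}_{\omega_c}^{(1)}=n\beta$ and $\partial\partial^{*}\omega_c+\bar\partial\bar\partial^{*}\omega_c=\tfrac{8(n-1)}{4+c}\beta$ into Theorem \ref{GauduchonRicciRelations}, and solve linearly for $c$. The one substantive difference is how you obtain the second ingredient: the paper simply imports it from \cite[p.~31]{LiuYangRicci}, whereas you derive it by first showing that $\omega_c$ remains locally conformally K\"ahler with Lee form $\vartheta_c=\tfrac{4}{4+c}\vartheta_0$ (via the pointwise identity $\vartheta_0\wedge\beta=\tfrac14\,\vartheta_0\wedge\omega_0$) and then invoking the lcK identity $\partial\partial^{*}\omega+\bar\partial\bar\partial^{*}\omega=-(n-1)\,d(J\vartheta)$ from the proof of Proposition \ref{LcKChernClassLemma}. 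This makes your argument self-contained and, as a bonus, explains conceptually why the identical ansatz succeeds in the Sasaki--Einstein suspension setting of Theorem \ref{MainThm4}, where the paper again runs the same computation.
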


\begin{proof}
Let $\omega_0$ denote the standard metric on the Hopf manifold $\mathbb{S}^{2n-1} \times \mathbb{S}^1$.  Following \cite[Section 6.1]{LiuYangRicci}, we set \begin{eqnarray*}
\omega_{\lambda} & : = & \omega_0 + 4 \lambda {}^l \text{Ric}_{\omega_0}^{(1)},
\end{eqnarray*}

for some $\lambda > -1$.  Introduce the notation $\alpha : = \sqrt{-1} \partial \bar{\partial} \log | z |^2$. From \cite[p. 31]{LiuYangRicci}, we know that ${}^c \text{Ric}_{\omega_{\lambda}}^{(1)} = n \alpha$ and $\partial \partial_{\lambda}^{\ast} \omega_{\lambda} + \bar{\partial} \bar{\partial}_{\lambda}^{\ast} \omega_{\lambda} = \frac{2(n-1)}{1+\lambda} \alpha$. Hence,  \begin{eqnarray*}
{}^t \text{Ric}_{\omega_{\lambda}}^{(1)} &=& {}^c \text{Ric}_{\omega_{\lambda}}^{(1)} + \frac{(t-1)}{2} \left( \partial \partial_{\lambda}^{\ast} \omega_{\lambda} + \bar{\partial} \bar{\partial}_{\lambda}^{\ast} \omega_{\lambda} \right) \\
&=& n \alpha + \frac{(t-1)}{2}  \frac{2(n-1)}{1+\lambda} \alpha \ = \ \frac{n(1+\lambda) + (t-1)(n-1)}{1+\lambda} \alpha.
\end{eqnarray*}

The coefficient of $\alpha$ vanishes for $\lambda = \frac{t(1-n)-1}{n}$.  The inequality $\lambda >-1$ (which ensures $\omega_{\lambda}$ is positive-definite) translates to $t<1$.
\end{proof}

\begin{rmk}
Observe that since $c_1^{\text{BC}}(K_X^{-1}) \neq 0$ for $X = \mathbb{S}^{2n-1} \times \mathbb{S}^1$,  there are no first Chern Ricci-flat metrics on $X$. After coming to this result independently, we are grateful to the referee our attention the result of Wang--Yang \cite[Theorem 1.11]{WangYang}.
\end{rmk}

To the author's knowledge, there is no known obstruction to the existence of first $t$--Gauduchon Ricci-flat metric metrics for $t>1$. Hence, it is natural to ask:

\begin{q}
Do there exist first $t$--Gauduchon Ricci-flat metrics on $\mathbb{S}^{2n-1} \times \mathbb{S}^1$ for $t>1$?
\end{q}

Recently, Correa \cite{Correa} extended the Liu--Yang construction to a much more general class of manifolds.  We will extend Correa's results to the entire Gauduchon line.  We first remind the reader of some terminology:

\begin{defn}
Let $(\mathcal{Q}, g_{\mathcal{Q}})$ be a Riemannian manifold.  We say that $(\mathcal{Q}, g_{\mathcal{Q}})$ is \textit{Sasakian} if the metric cone $(\mathcal{C}(\mathcal{Q}), g_{\mathcal{C}(\mathcal{Q})})$, where $\mathcal{C}(\mathcal{Q}): = \mathcal{Q} \times \mathbb{R}_+$ and $g_{\mathcal{C}(\mathcal{Q})} : = r^2 g_{\mathcal{Q}} + dr \otimes dr$, is a K\"ahler cone. A \textit{Sasaki morphism} is an isometric immersion $(\mathcal{Q}_1,g_{\mathcal{Q}_1}) \to (\mathcal{Q}_2,g_{\mathcal{Q}_2})$ such that the induced map on the cones $\mathcal{C}(\mathcal{Q}_1) \to \mathcal{C}(\mathcal{Q}_2)$ is holomorphic. A \textit{Sasaki automorphism} is an invertible Sasaki morphism $\varphi : (\mathcal{Q}, g_{\mathcal{Q}}) \to (\mathcal{Q}, g_{\mathcal{Q}})$ with $\varphi^{-1}$ a Sasaki morphism. Given a Sasaki automorphism $\varphi$ and $\kappa \in (0,\infty)\setminus\{1\}$, we define the suspension by $(\varphi,\kappa)$ of $\mathcal{Q}$, as
\[
\Sigma_{\varphi,\kappa}(\mathcal{Q}) := \frac{\mathcal{Q}\times [0,\log \kappa ]}{(\varphi(x),0)\sim (x,\log \kappa)}.
\]
\end{defn}

In the above definition, $r$ is the coordinate on the positive real line $\mathbb{R}_+ : = (0,\infty)$.  Sasaki manifolds are the odd-dimensional cousins of K\"ahler manifolds (analogous to the relationship between contact and symplectic manifolds or CR and complex manifolds). An excellent concise account of Sasaki manifolds is given in \cite[$\S$ 6.4]{OrneaVerbitsky}, some of which we borrow here.

\begin{ex}
For $n \in \mathbb{N}$, the sphere $\mathbb{S}^{2n-1}$ supports a number of Sasaki structures. Indeed, the cone $\mathcal{C}(\mathbb{S}^{2n-1}) = \mathbb{C}^n - \{ 0 \}$. More generally, Blair \cite{Blair} showed that totally umbilical, oriented real hypersurfaces in K\"ahler manifolds support a Sasaki structure. For instance, $\mathbb{S}^2 \times \mathbb{S}^3$, viewed as the unit tangent bundle of $\mathbb{S}^3$ is a Sasaki manifold. Further examples are discussed in \cite{BoG1}.
\end{ex}

\begin{defn}
Let $(\mathcal{Q},g_{\mathcal{Q}})$ be a Sasaki manifold. We say that $(\mathcal{Q},g_{\mathcal{Q}})$ is \textit{Sasaki--Einstein} if $\text{Ric}_{g_{\mathcal{Q}}} = \lambda g_{\mathcal{Q}}$, for some constant $\lambda \in \mathbb{R}$.
\end{defn}

\begin{thmfix}
Let $(\mathcal{Q},g_{\mathcal{Q}})$ be a Sasaki manifold of (real) dimension $2n+1$. The following are equivalent: \begin{itemize}
    \item[(i)] $(\mathcal{Q},g_{\mathcal{Q}})$ is Sasaki--Einstein with $\text{Ric}_{g_{\mathcal{Q}}} = 2n g_{\mathcal{Q}}$;
    \item[(ii)] The cone $(\mathcal{C}(\mathcal{Q}), g_{\mathcal{C(Q)}})$ is Ricci-flat K\"ahler.
\end{itemize}
\end{thmfix}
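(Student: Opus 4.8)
The plan is to reduce the statement to the classical relationship between the Ricci curvature of a Riemannian cone and that of its link. Set $m := \dim_{\mathbb{R}} \mathcal{Q} = 2n+1$, so that $\mathcal{C}(\mathcal{Q}) = \mathcal{Q} \times \mathbb{R}_+$ equipped with $g_{\mathcal{C}(\mathcal{Q})} = r^2 g_{\mathcal{Q}} + dr \otimes dr$ has real dimension $m+1 = 2n+2$. First I would record the cone's Levi-Civita connection: in coordinates $(r, x^1, \dots, x^m)$ on $\mathbb{R}_+ \times \mathcal{Q}$ one finds $\Gamma^r_{ij} = -r\,(g_{\mathcal{Q}})_{ij}$, $\Gamma^i_{rj} = \tfrac{1}{r}\,\delta^i_j$, and $\Gamma^k_{ij}$ equal to the Christoffel symbols of $g_{\mathcal{Q}}$, with all remaining symbols vanishing. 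A short trace computation (equivalently, the standard warped-product curvature identities) then gives
\[
\text{Ric}_{g_{\mathcal{C}(\mathcal{Q})}}(\partial_r, \partial_r) = 0, \qquad \text{Ric}_{g_{\mathcal{C}(\mathcal{Q})}}(\partial_r, U) = 0, \qquad \text{Ric}_{g_{\mathcal{C}(\mathcal{Q})}}(U, V) = \text{Ric}_{g_{\mathcal{Q}}}(U,V) - (m-1)\,g_{\mathcal{Q}}(U,V),
\]
for $U, V$ tangent to the $\mathcal{Q}$-factor (extended to be $r$-independent). Since $m-1 = 2n$, the cone is automatically Ricci-flat in the radial directions, and $\text{Ric}_{g_{\mathcal{C}(\mathcal{Q})}} = \text{Ric}_{g_{\mathcal{Q}}} - 2n\, g_{\mathcal{Q}}$ on the horizontal distribution.

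With this in hand, both implications are immediate. For (i) $\Rightarrow$ (ii): assuming $\text{Ric}_{g_{\mathcal{Q}}} = 2n\, g_{\mathcal{Q}}$, the displayed identities force $\text{Ric}_{g_{\mathcal{C}(\mathcal{Q})}} \equiv 0$; and since $\mathcal{Q}$ is Sasakian, $(\mathcal{C}(\mathcal{Q}), g_{\mathcal{C}(\mathcal{Q})})$ is a K\"ahler cone by definition, so it is Ricci-flat K\"ahler. For (ii) $\Rightarrow$ (i): restricting the vanishing of $\text{Ric}_{g_{\mathcal{C}(\mathcal{Q})}}$ to horizontal vectors and using the third identity yields $\text{Ric}_{g_{\mathcal{Q}}}(U,V) = 2n\, g_{\mathcal{Q}}(U,V)$, i.e. $\mathcal{Q}$ is Sasaki--Einstein with Einstein constant $2n$. (The K\"ahler hypothesis in (ii) plays no role in this direction, but it holds automatically once $\mathcal{Q}$ is Sasakian.)

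The only genuinely delicate point is the bookkeeping in the cone Ricci formula --- one must get the dimension shift $(m-1) = 2n$ and the vanishing of the radial components exactly right, since an error here would alter the Einstein constant appearing in (i). I would handle this either by quoting the warped-product curvature identities (a metric cone being the warped product $dr^2 + r^2 g_{\mathcal{Q}}$ with warping function $f(r) = r$) or by carrying out the short Christoffel-symbol computation indicated above; everything else is formal.
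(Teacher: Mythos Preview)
Your argument is correct and is precisely the standard proof of this classical fact: compute the Ricci curvature of the metric cone $g_{\mathcal{C}(\mathcal{Q})} = dr^2 + r^2 g_{\mathcal{Q}}$ via the warped-product formulas, observe that the radial and mixed components vanish automatically while the tangential component reads $\text{Ric}_{g_{\mathcal{C}(\mathcal{Q})}}(U,V) = \text{Ric}_{g_{\mathcal{Q}}}(U,V) - (m-1)\,g_{\mathcal{Q}}(U,V)$ with $m-1 = 2n$, and conclude. Your bookkeeping on the Christoffel symbols and the dimension shift is right.

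However, note that the paper does \emph{not} supply its own proof of this theorem: it is stated without proof as a well-known characterization (see, e.g., Boyer--Galicki's monograph on Sasakian geometry, referenced in the surrounding discussion). So there is no proof in the paper to compare against; you have simply filled in the standard argument that the authors chose to omit.
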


\begin{rmk}
By the results of \cite{BoyerGalicki1999,Kashiwada}, every Sasaki manifold of (real) dimension $3$ is Sasaki--Einstein.
\end{rmk}

\begin{ex}
Let $\mathcal{Q} = \mathbb{S}^{2n-1}$ and take the Sasaki automorphism to be the identity map $\Phi = \text{id}$. Then we recover the Hopf manifold $\Sigma_{\Phi,\kappa}(\mathcal{Q}) \ = \ \mathbb{S}^1 \times \mathbb{S}^{2n-1}$, for any $\kappa>0, \kappa \neq 1$.
\end{ex}

Let us now give prove the existence of first $t$--Gauduchon Ricci-flat metrics on certain Sasaki manifolds, extending the results of Correa \cite{Correa} (in the special case of the Lichnerowicz connection):

\begin{thmfix}
Let $(\mathcal{Q},g_{\mathcal{Q}})$ be a compact Sasaki--Einstein manifold. Let $\Phi : (\mathcal{Q},\eta_{\mathcal{Q}}) \to (\mathcal{Q},\eta_{\mathcal{Q}})$ be a Sasaki automorhism and $\kappa>0, \kappa \neq 1$ a constant. For any $t \in (-\infty, 1)$, the suspension $\Sigma_{\Phi,\kappa}(\mathcal{Q})$ admits a Hermitian metric $\omega$ such that $${}^t \text{Ric}_{\omega}^{(1)} \ = \ 0.$$ In particular, $\Sigma_{\Phi,\kappa}(\mathcal{Q})$ admits first Bimsut Ricci-flat metrics, first Hermitian conformal Ricci-flat metrics, and first Lichnerowicz Ricci-flat metrics.
\end{thmfix}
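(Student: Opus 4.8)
The plan is to transplant the Liu--Yang/Wang--Yang argument for the Hopf manifolds (\cite[Theorem 1.11]{WangYang}) to the Kähler cone over $\mathcal{Q}$, and then to read off the dependence on $t$ from Theorem \ref{GauduchonRicciRelations}. First I would recall, following Correa \cite{Correa}, that since $\mathcal{Q}$ is compact Sasaki--Einstein its Einstein constant is necessarily $2(n-1)$ with $n := \dim_{\mathbb{C}}\Sigma_{\Phi,\kappa}(\mathcal{Q})$, so the metric cone $\big(\mathcal{C}(\mathcal{Q}),g_{\mathcal{C}(\mathcal{Q})}\big)$ carries a Ricci-flat Kähler form $\omega_{\mathcal{C}} = \tfrac{\sqrt{-1}}{2}\partial\bar{\partial}r^2$, i.e. ${}^c\text{Ric}^{(1)}_{\omega_{\mathcal{C}}} = 0$; and that $X := \Sigma_{\Phi,\kappa}(\mathcal{Q})$ is biholomorphic to the compact quotient $\mathcal{C}(\mathcal{Q})/\langle\gamma\rangle$, where $\gamma$ composes the holomorphic, $r$-preserving lift of $\Phi$ with the holomorphic homothety $r\mapsto\kappa r$, so that $\gamma^* r^2 = \kappa^2 r^2$ and $\gamma^*\omega_{\mathcal{C}} = \kappa^2\omega_{\mathcal{C}}$.

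\textbf{The family of metrics.} On $\mathcal{C}(\mathcal{Q})$ I would set $\omega_0 := r^{-2}\omega_{\mathcal{C}}$ and $\alpha := \sqrt{-1}\,\partial\bar{\partial}\log r^2$; both are $\gamma$-invariant, hence descend to $X$, and, with $\eta$ the pulled-back contact $1$-form of $\mathcal{Q}$, one has $\alpha = d\eta$ and $\omega_0 = \tfrac12\,d\log r^2\wedge\eta + \tfrac12\,\alpha$, where $\alpha$ is closed, semipositive, and of pointwise rank $n-1$ (vanishing on the ``radial--Reeb'' holomorphic line). For $\lambda > -\tfrac12$ this lets me define a Hermitian metric on $X$,
\[
\omega_\lambda \ := \ \omega_0 + \lambda\,\alpha \ = \ \tfrac12\,d\log r^2\wedge\eta + \big(\tfrac12+\lambda\big)\alpha ,
\]
positive-definite exactly for $\lambda > -\tfrac12$ by transversality of the rank-$2$ radial--Reeb block and the transverse Kähler block. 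I would then establish two facts by unwinding the cone structure equations. First, $d\log r^2\wedge\eta$ squares to zero and $\alpha^n = 0$, so $\omega_\lambda^n = (1+2\lambda)^{n-1}\omega_0^n$; combined with ${}^c\text{Ric}^{(1)}_{\omega_{\mathcal{C}}} = 0$ this gives
\[
{}^c\text{Ric}^{(1)}_{\omega_\lambda} \ = \ -\sqrt{-1}\,\partial\bar{\partial}\log\omega_\lambda^n \ = \ -\sqrt{-1}\,\partial\bar{\partial}\log\big(r^{-2n}\omega_{\mathcal{C}}^n\big) \ = \ n\,\alpha .
\]
Second, $\omega_\lambda$ is locally conformally Kähler: $d\omega_\lambda = -\tfrac12\,d\log r^2\wedge\alpha = \vartheta_\lambda\wedge\omega_\lambda$ with closed Lee form $\vartheta_\lambda = -\tfrac{1}{1+2\lambda}\,d\log r^2$, whence (directly, or via the identity $\partial\partial^{\ast}\omega + \bar{\partial}\bar{\partial}^{\ast}\omega = -(n-1)\,d(J\vartheta)$ for compact locally conformally Kähler metrics recorded in the proof of Proposition \ref{LcKChernClassLemma}) $\partial\partial^{\ast}\omega_\lambda + \bar{\partial}\bar{\partial}^{\ast}\omega_\lambda = \tfrac{2(n-1)}{1+2\lambda}\,\alpha$.

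\textbf{Conclusion.} Plugging both into Theorem \ref{GauduchonRicciRelations} yields
\[
{}^t\text{Ric}^{(1)}_{\omega_\lambda} \ = \ {}^c\text{Ric}^{(1)}_{\omega_\lambda} + \frac{t-1}{2}\big(\partial\partial^{\ast}\omega_\lambda + \bar{\partial}\bar{\partial}^{\ast}\omega_\lambda\big) \ = \ \Big(n + \frac{(t-1)(n-1)}{1+2\lambda}\Big)\alpha ,
\]
and the coefficient vanishes precisely when $1+2\lambda = \tfrac{(1-t)(n-1)}{n}$, i.e. for $\lambda = \lambda_0(t) := \tfrac12\big(\tfrac{(1-t)(n-1)}{n}-1\big)$. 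This $\lambda_0(t)$ exceeds $-\tfrac12$ — so that $\omega_{\lambda_0(t)}$ is a genuine Hermitian metric — if and only if $(1-t)(n-1) > 0$, i.e. $t < 1$ (here $n\ge 2$, since $\mathcal{Q}$ has odd dimension $\ge 3$). Hence $\omega := \omega_{\lambda_0(t)}$ satisfies ${}^t\text{Ric}^{(1)}_\omega = 0$ for every $t\in(-\infty,1)$, and specializing to $t=-1,\tfrac12,\tfrac13,0$ recovers the first Bismut, Hermitian conformal, Minimal and Lichnerowicz Ricci-flat metrics named in the statement. I expect the main effort to lie in the cone-geometry bookkeeping of the middle step — verifying $\alpha = d\eta$, the decomposition of $\omega_0$, the rank facts underlying $\omega_\lambda^n = (1+2\lambda)^{n-1}\omega_0^n$, and the Lee form $\vartheta_\lambda$ — which for $t = 0$ is exactly Correa's computation \cite{Correa}; by contrast, the final step extending to all $t$ is purely algebraic and parallels the Hopf case of \cite[Theorem 1.11]{WangYang}, the only new point being that the positivity threshold on $\lambda$ corresponds precisely to the range $t < 1$.
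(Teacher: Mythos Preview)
Your proposal is correct and follows essentially the same approach as the paper: both start from the conformally rescaled Ricci-flat K\"ahler cone metric $\omega_0 = r^{-2}\omega_{\mathcal C}$ on $\Sigma_{\Phi,\kappa}(\mathcal Q)$, deform it along the closed $(1,1)$-form coming from the Lee form (your $\alpha = \sqrt{-1}\,\partial\bar\partial\log r^2$ is, up to sign and a factor of $2$, the paper's $-d(J\vartheta)$, so your parameter $\lambda$ is half of theirs), compute ${}^c\text{Ric}^{(1)}$ from the volume form and $\partial\partial^{\ast}+\bar\partial\bar\partial^{\ast}$ via the LCK identity, and then solve the resulting linear equation in $\lambda$. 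The only cosmetic difference is that the paper packages the LCK input by citing the Vaisman structure from \cite{GiniOrneaParton} together with Proposition~\ref{LcKChernClassLemma}, whereas you verify the decomposition of $\omega_0$ and the Lee form by hand from the cone coordinates --- but the computations and the final positivity threshold $t<1$ coincide.
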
 \begin{proof}
Let $(\mathcal{Q},g_{\mathcal{Q}})$ be a compact Sasaki--Einstein manifold with Sasaki automorphism $\Phi$ and positive constant $\kappa$. The map $\varphi := \log(r)$ defines a diffeomorphism $\mathcal{C}(\mathcal{Q}) \simeq \mathcal{Q} \times \mathbb{R}$, and the metric $g_{\mathcal{C}(\mathcal{Q})}$ on $\mathcal{C}(\mathcal{Q})$ can be written as $g_{\mathcal{C}(\mathcal{Q})} = e^{2\varphi}(\eta_{\mathcal{Q}} + d\varphi \otimes d\varphi)$. Let $\Gamma_{\Phi, \kappa}$ be the cyclic group generated by $(x,\varphi) \mapsto (\Phi(x), \varphi + \log(\kappa))$. Endow $\mathcal{C}(\mathcal{Q})$ with the Hermitian structure $(e^{-2\varphi}g_{\mathcal{C}(\mathcal{Q})}, \mathcal{J})$. The group $\Gamma_{\Phi,\kappa}$ acts on $(\mathcal{C}(\mathcal{Q}),e^{-2\varphi}g_{\mathcal{C}(\mathcal{Q})},\mathcal{J})$ by holomorphic isometries. Hence, the quotient $\Sigma_{\Phi,\kappa}(\mathcal{Q}) : = \mathcal{C}(\mathcal{Q}) / \Gamma_{\Phi, \kappa}$ is furnished with a Hermitian structure. From \cite{GiniOrneaParton}, the Hermitian structure on $\Sigma_{\Phi,\kappa}(\mathcal{Q})$ is Vaisman (i.e., the Lee form $\vartheta$ is parallel with respect to the Levi-Civita connection). From \ref{LcKChernClassLemma}, we can write $${}^t \text{Ric}_{\omega}^{(1)} = \alpha + \frac{t(1-n)-1}{2} d (J \vartheta),$$ where $\alpha$ is a representative of $2\pi c_1(K_X^{-1})$. Let $p : \mathcal{C}(\mathcal{Q}) \to \Sigma_{\Phi, \kappa}(\mathcal{Q})$ denote the projection map. Since $\omega$ is locally written as $\omega = e^{-2\varphi} \omega_{\text{RFK}}$, where $\omega_{\text{RFK}}$ is the Ricci-flat K\"ahler metric on $\mathcal{C}(\mathcal{Q})$, we see that $p^{\ast} \alpha = {}^c \text{Ric}(\omega_{\text{RFK}})=0$. Hence, ${}^t \text{Ric}_{\omega}^{(1)}= \frac{t(1-n)-1}{2} d(J \vartheta)$, and for $\lambda>-1$, we set $\omega_{\lambda} : = \omega - \lambda d(J \vartheta)$. We observe that $\omega_{\lambda}^n = (1+\lambda)^{n-1} \omega^n$, and hence, \begin{eqnarray*}
{}^c \text{Ric}_{\omega_{\lambda}}^{(1)} &=& {}^c \text{Ric}_{\omega}^{(1)} \ = \ - \frac{n}{2} d(J \vartheta).
\end{eqnarray*}
Further, since $\partial \partial^{\ast} \omega_{\lambda} + \bar{\partial} \bar{\partial}^{\ast} \omega_{\lambda} = - \frac{n-1}{1+\lambda} d(J\vartheta)$, we see that \begin{eqnarray*}
    {}^t \text{Ric}_{\omega_{\lambda}}^{(1)} &=& {}^c \text{Ric}_{\omega_{\lambda}}^{(1)} + \frac{(t-1)}{2} (\partial \partial^{\ast} \omega_{\lambda} + \bar{\partial} \bar{\partial}^{\ast} \omega_{\lambda}) \\
    &=& - \frac{n}{2} d(J \vartheta) - \frac{(t-1)}{2} \frac{n-1}{1+\lambda} d(J\vartheta) \\
    &=& \left( \frac{-n(1+\lambda) +(1-t)(n-1)}{2(1+\lambda)} \right) d(J\vartheta).
\end{eqnarray*}
Setting $\lambda = \frac{t(1-n)-1}{n}$, we see that ${}^t \text{Ric}_{\omega_{\lambda}}^{(1)}=0$. Note that since $\lambda>-1$, we see that $t<1$.
\end{proof}

From the above theorem, we see that a large number of examples of first $t$--Gauduchon Ricci-flat metrics can be produced by producing examples of Sasaki--Einstein manifolds:

\begin{cor} 
There exist first $t$--Gauduchon Ricci-flat metrics for $t \in (-\infty,1)$ on the following classes of manifolds: \begin{itemize}
    \item[(i)] The suspension $\Sigma_{\Phi,\kappa}(\mathcal{Q})$, where $(\mathcal{Q},g_{\mathcal{Q}})$ is a compact Sasaki manifold of (real) dimension $3$.
    \item[(ii)] Compact Hermitian Weyl--Einstein manifolds. In particular, locally conformally hyperK\"ahler manifolds.
    \item[(iii)] Let $X:=\mathcal{L}(\textbf{a}) \times \mathbb{S}^1$, where $\mathcal{L}(\textbf{a}) := Y(\textbf{a})\cap \mathbb{S}^{2n+1}$ is the link of a Brieskorn--Pham singularity $$Y(\textbf{a}) \ : = \ \left( \sum_{k=0}^n z_k^{a_k} =0 \right) \subset \mathbb{C}^{n+1}, \hspace{1cm} n \geq 3, \hspace{1cm} a_0 \leq \cdots \leq a_n.$$ If $$1 \ < \ \sum_{k=0}^n \frac{1}{a_k} \ < \ 1 + \frac{n}{a_n}$$ then $X$ admits first $t$--Gauduchon Ricci-flat metrics for $t<1$.
    \item[(iv)] $\Lambda \times \mathbb{S}^1$, where $\Lambda$ is an odd-dimensional homotopy sphere that bounds a parallelizable manifold. 
\end{itemize}

In particular, all the above examples admit first Bismut Ricci-flat metrics,  first Hermitian conformal Ricci-flat metrics, first Minimal Ricci-flat metrics, and first Lichnerowicz Ricci-flat metrics. 
\end{cor}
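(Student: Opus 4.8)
The plan is to derive all four statements from the preceding theorem (the existence theorem for first $t$--Gauduchon Ricci-flat metrics on suspensions of compact Sasaki--Einstein manifolds) by recognising each of the listed manifolds as a suspension $\Sigma_{\Phi,\kappa}(\mathcal{Q})$ of a compact Sasaki--Einstein manifold $\mathcal{Q}$. Once such a description is available, the preceding theorem delivers a Hermitian metric $\omega$ with ${}^t \text{Ric}_{\omega}^{(1)} = 0$ for every $t \in (-\infty,1)$; since the Bismut, Hermitian conformal, Minimal and Lichnerowicz parameters $t = -1, \tfrac{1}{2}, \tfrac{1}{3}, 0$ all lie in $(-\infty,1)$, the final sentence of the corollary is then immediate. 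Thus the entire content is the geometric recognition step, and in each case the required input --- a compact Sasaki--Einstein manifold, possibly together with a Sasaki automorphism --- is exactly the data already used by Correa \cite{Correa} in the Lichnerowicz case.

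Cases (i), (iii) and (iv) are each a one-line appeal to a standard Sasaki--Einstein existence result. For (i), a compact Sasaki manifold of real dimension three is Sasaki--Einstein, after a transverse homothety normalising $\text{Ric}_{g_{\mathcal{Q}}} = 2 g_{\mathcal{Q}}$, by the results of Boyer--Galicki \cite{BoyerGalicki1999} and Kashiwada \cite{Kashiwada} already recalled; thus $\Sigma_{\Phi,\kappa}(\mathcal{Q})$ satisfies the hypotheses of the preceding theorem verbatim. For (iii), the inequality $1 < \sum_k a_k^{-1} < 1 + n/a_n$ is precisely the Boyer--Galicki--Kollár criterion under which the link $\mathcal{L}(\textbf{a}) = Y(\textbf{a}) \cap \mathbb{S}^{2n+1}$ carries a Sasaki--Einstein metric --- the lower bound renders the transverse orbifold Fano, the upper bound secures the orbifold Kähler--Einstein estimate --- and then $X = \mathcal{L}(\textbf{a}) \times \mathbb{S}^1 = \Sigma_{\text{id},\kappa}(\mathcal{L}(\textbf{a}))$ in the sense of the earlier identification $\Sigma_{\text{id},\kappa}(\mathcal{Q}) \cong \mathbb{S}^1 \times \mathcal{Q}$, so the preceding theorem applies with $\mathcal{Q} = \mathcal{L}(\textbf{a})$. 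For (iv), every odd-dimensional homotopy sphere $\Lambda$ bounding a parallelisable manifold is realised as such a Brieskorn link and admits a Sasaki--Einstein metric by the work of Boyer--Galicki--Kollár, so $\Lambda \times \mathbb{S}^1 = \Sigma_{\text{id},\kappa}(\Lambda)$ is covered as well.

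The one class demanding genuine structure theory is (ii). A compact Hermitian Weyl--Einstein manifold $(X,[g],J)$ is, by Gauduchon's analysis of the torsion one-form, Vaisman unless it is globally conformally Kähler; in the latter case it is Kähler--Einstein and falls outside the non-Kähler regime of interest, while in the former case the structure theory of compact Vaisman manifolds (see Ornea--Verbitsky \cite{OrneaVerbitsky}) exhibits $X$ as the mapping torus of a compact Sasaki manifold $\mathcal{Q}$ by a Sasaki automorphism, with the Einstein--Weyl condition promoting $\mathcal{Q}$ to Sasaki--Einstein; that is, $X \cong \Sigma_{\Phi,\kappa}(\mathcal{Q})$. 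The locally conformally hyperkähler case runs through the same mechanism with $\mathcal{Q}$ now a compact $3$-Sasakian manifold --- which is Sasaki--Einstein with respect to each of its three Sasaki structures --- so $X$ is again a suspension of a compact Sasaki--Einstein manifold, and the preceding theorem produces the desired first $t$--Gauduchon Ricci-flat metrics for all $t \in (-\infty,1)$.

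The step I expect to be the main obstacle is this last one: pinning down the precise structure statement in (ii), and in particular verifying that the Einstein--Weyl hypothesis genuinely upgrades the base Sasaki manifold of the Vaisman presentation to a Sasaki--Einstein one, together with dealing with (or explicitly excluding) the globally conformally Kähler degeneration. By contrast, (i), (iii) and (iv) are routine combinations of known Sasaki--Einstein existence theorems with a single application of the preceding theorem.
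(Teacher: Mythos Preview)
Your approach is correct and matches the paper's intent: the corollary is stated without proof, immediately after the sentence ``From the above theorem, we see that a large number of examples of first $t$--Gauduchon Ricci-flat metrics can be produced by producing examples of Sasaki--Einstein manifolds,'' so the entire content is indeed the recognition of each class as a suspension of a compact Sasaki--Einstein manifold, drawing on the same literature (Kashiwada, Boyer--Galicki for (i), Boyer--Galicki--Koll\'ar for (iii) and (iv), and the Vaisman/Einstein--Weyl structure theory for (ii)) that you invoke. Your caution about case (ii) is well placed, but the paper does not elaborate on that point either.
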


\subsection*{3.4. The Gauduchon Scalar Curvatures}
In the following relations on the scalar curvature, we remind the reader that ${}^t \text{Scal}_{\omega} : = \text{tr}_{\omega} {}^t \text{Ric}_{\omega}^{(1)}$ is the familiar ($t$--Gauduchon) \textit{scalar curvature}, while ${}^t \widetilde{\text{Scal}}_{\omega} : = \text{tr}_{\omega} {}^t \text{Ric}_{\omega}^{(3)}$ is the ($t$--Gauduchon) \textit{altered scalar curvature}. Let $\tau$ be the (Chern) torsion $(1,0)$ form defined uniquely by $\partial \omega^{n-1} = \tau \wedge \omega^{n-1}$. It is well known that $\tau = T^k_{ik}e^i$ in any local frame. Taking the trace of the Ricci curvature relations in Corollary \ref{RicciRelationsCor}, we have: 

\begin{cor}\label{ScalarcurvatureRelations}
Let $(X, \omega)$ be a Hermitian manifold.  The $t$--Gauduchon scalar curvatures are given by \begin{eqnarray*}
{}^t \text{Scal}_{\omega} &=& t {}^c \text{Scal}_{\omega}  + (1-t) {}^c \widetilde{\text{Scal}}_{\omega} \\
{}^t \widetilde{\text{Scal}}_{\omega} &=& t {}^c \widetilde{\text{Scal}}_{\omega} + (1-t) {}^c \text{Scal}_{\omega} - \frac{(1-t)^2}{4} \left(|{}^cT|^2 + |\tau|^2\right).
\end{eqnarray*}
\end{cor}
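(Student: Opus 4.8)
The plan is to derive both identities by applying the trace $\text{tr}_\omega$ to the four Ricci relations recorded in Corollary \ref{RicciRelationsCor}, and then collecting terms using $\text{tr}_\omega\,{}^c\text{Ric}^{(1)}_\omega = \text{tr}_\omega\,{}^c\text{Ric}^{(2)}_\omega = {}^c\text{Scal}_\omega$ and $\text{tr}_\omega\,{}^c\text{Ric}^{(3)}_\omega = \text{tr}_\omega\,{}^c\text{Ric}^{(4)}_\omega = {}^c\widetilde{\text{Scal}}_\omega$. I would work in a fixed local unitary frame, in which the trace of a $(1,1)$--form $\sqrt{-1}\sum_{i,\ell} a_{i\bar\ell}\,e^i\wedge\overline{e^\ell}$ is $\sum_i a_{i\bar i}$, and use the conventions $|{}^cT|^2 = \sum_{i,j,k}|{}^cT^k_{ij}|^2$, $\tau_i = \sum_k {}^cT^k_{ik}$, and $|\tau|^2 = \sum_i|\tau_i|^2$.

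For ${}^t\text{Scal}_\omega$, trace the first relation of Corollary \ref{RicciRelationsCor}: the pair of terms $\tfrac{1-t}{2}({}^c\text{Ric}^{(3)}_\omega + {}^c\text{Ric}^{(4)}_\omega)$ traces to $(1-t)\,{}^c\widetilde{\text{Scal}}_\omega$, giving ${}^t\text{Scal}_\omega = t\,{}^c\text{Scal}_\omega + (1-t)\,{}^c\widetilde{\text{Scal}}_\omega$. As a built-in consistency check, tracing the second relation produces the torsion double-sum $\sum_{i,k,r}\big({}^cT^r_{ik}\overline{{}^cT^r_{ik}} - {}^cT^k_{ir}\overline{{}^cT^k_{ir}}\big)$, which vanishes because its two halves agree after interchanging the summation labels $k$ and $r$; this returns the same value for ${}^t\text{Scal}_\omega$.

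For ${}^t\widetilde{\text{Scal}}_\omega$, trace the third relation of Corollary \ref{RicciRelationsCor}. The non-torsion terms contribute $t\,{}^c\widetilde{\text{Scal}}_\omega + (1-t)\,{}^c\text{Scal}_\omega$, while the torsion term contributes $\tfrac{(1-t)^2}{4}\sum_{i,k,r}\big({}^cT^r_{ik}\overline{{}^cT^r_{ki}} - {}^cT^i_{ir}\overline{{}^cT^k_{kr}}\big)$. Using the skew-symmetry ${}^cT^r_{ki} = -{}^cT^r_{ik}$ of the Chern torsion in its lower indices, the first half equals $-\sum_{i,k,r}|{}^cT^r_{ik}|^2 = -|{}^cT|^2$; and since $\sum_i {}^cT^i_{ir} = -\tau_r$ by the same skew-symmetry, the second half equals $\sum_r|\tau_r|^2 = |\tau|^2$. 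Hence the torsion term contributes $-\tfrac{(1-t)^2}{4}(|{}^cT|^2 + |\tau|^2)$, which yields the stated formula; tracing the fourth relation gives the same answer after an identical relabelling.

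The computation is otherwise mechanical: the only point requiring care is the index bookkeeping in the torsion sums — tracking which index is contracted against the metric, applying the skew-symmetry of ${}^cT$ in its lower pair, and matching the resulting scalars against the chosen normalisations of $|{}^cT|^2$, $\tau$ and $|\tau|^2$ — together with the (harmless) verification that the two admissible ways of computing each scalar curvature agree.
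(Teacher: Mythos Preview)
Your proposal is correct and follows exactly the paper's approach: the paper simply states ``Taking the trace of the Ricci curvature relations in Corollary \ref{RicciRelationsCor}'' and records the result, and you have spelled out precisely that trace computation, including the index relabellings and the use of the skew-symmetry of ${}^cT$ in its lower indices to identify the torsion sums with $-|{}^cT|^2$ and $-|\tau|^2$. Your consistency checks (tracing the second and fourth Ricci relations) are a nice addition but not needed for the argument.
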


The following is an immediate consequence of expressing the well-known Kobayashi--Wu \cite{KobayashiWu} vanishing theorem in terms of ${}^t \text{Scal}_{\omega}$:

\begin{prop}
Let $(X, \omega)$ be a compact Hermitian manifold. Suppose that one of the following conditions holds: \begin{itemize}
    \item[(i)] ${}^t \text{Scal}_{\omega} + (t-1) {}^c \widetilde{\text{Scal}}_{\omega} >0$ for some $t>0$. 
    \item[(ii)] ${}^t \text{Scal}_{\omega} + (t-1) {}^c \widetilde{\text{Scal}}_{\omega}<0$ for some $t<0$.
    \item[(iii)] ${}^t \text{Scal}_{\omega} + (1-t)(d^{\ast} \tau + | \tau |^2) \ > \ 0$ for some $t \in \mathbb{R}$.
\end{itemize}
Then the Kodaira dimension $\kappa(X)=-\infty$.
\end{prop}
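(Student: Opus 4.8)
The plan is to use the scalar-curvature relations of Corollary \ref{ScalarcurvatureRelations} to reduce all three hypotheses to the single pointwise inequality ${}^c\text{Scal}_{\omega}>0$ on $X$, and then to apply the Kobayashi--Wu vanishing theorem \cite{KobayashiWu}, in the form: a compact Hermitian manifold whose Chern scalar curvature is everywhere positive satisfies $H^0(X,K_X^{\otimes m})=0$ for all $m\geq 1$, hence $\kappa(X)=-\infty$.

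For the reductions, recall from Corollary \ref{ScalarcurvatureRelations} that ${}^t\text{Scal}_{\omega}=t\,{}^c\text{Scal}_{\omega}+(1-t)\,{}^c\widetilde{\text{Scal}}_{\omega}$, so that
\[
{}^t\text{Scal}_{\omega}+(t-1)\,{}^c\widetilde{\text{Scal}}_{\omega}=t\,{}^c\text{Scal}_{\omega};
\]
dividing by $t>0$ in (i), and by $t<0$ in (ii) (which reverses the inequality), shows that hypotheses (i) and (ii) each say exactly ${}^c\text{Scal}_{\omega}>0$. For (iii), I would first take $\operatorname{tr}_{\omega}$ of the identity ${}^c\text{Ric}_{\omega}^{(3)}={}^c\text{Ric}_{\omega}^{(1)}-\partial\partial^{\ast}\omega$ recalled above from \cite{LiuYangRicci}, giving ${}^c\text{Scal}_{\omega}-{}^c\widetilde{\text{Scal}}_{\omega}=\operatorname{tr}_{\omega}(\partial\partial^{\ast}\omega)$, and then use the standard torsion computation identifying the right-hand side with $d^{\ast}\tau+|\tau|^2$. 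Substituting ${}^c\widetilde{\text{Scal}}_{\omega}={}^c\text{Scal}_{\omega}-d^{\ast}\tau-|\tau|^2$ into the relation for ${}^t\text{Scal}_{\omega}$ gives
\[
{}^t\text{Scal}_{\omega}+(1-t)\big(d^{\ast}\tau+|\tau|^2\big)=t\,{}^c\text{Scal}_{\omega}+(1-t)\big({}^c\widetilde{\text{Scal}}_{\omega}+d^{\ast}\tau+|\tau|^2\big)={}^c\text{Scal}_{\omega},
\]
which is $t$-independent, so hypothesis (iii) also says exactly ${}^c\text{Scal}_{\omega}>0$. It is worth remarking, in the final write-up, that the three apparently distinct hypotheses thus coincide.

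Finally I would recall the pointwise Kobayashi--Wu statement together with its short proof. Given a nonzero $s\in H^0(X,K_X^{\otimes m})$, the metric on $K_X^{\otimes m}$ induced by $\omega$ has curvature $-m\,{}^c\text{Ric}_{\omega}^{(1)}$, so Poincar\'e--Lelong yields $\sqrt{-1}\partial\bar{\partial}\log|s|^2=m\,{}^c\text{Ric}_{\omega}^{(1)}+2\pi[\operatorname{div}(s)]$; taking $\operatorname{tr}_{\omega}$ gives $\Delta_{\omega}\log|s|^2\geq m\,{}^c\text{Scal}_{\omega}$, which is incompatible with $\Delta_{\omega}\log|s|^2\leq 0$ at a maximum of the bounded-above function $\log|s|^2$ (a maximum necessarily lying off $\operatorname{div}(s)$, where $\log|s|^2$ is smooth). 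Hence all plurigenera vanish and $\kappa(X)=-\infty$. The step I expect to require the most care is the bookkeeping underlying (iii): pinning down sign conventions and the exact meaning of $d^{\ast}\tau$ so that ${}^c\text{Scal}_{\omega}-{}^c\widetilde{\text{Scal}}_{\omega}=d^{\ast}\tau+|\tau|^2$ holds on the nose; a secondary point is to make sure one uses the pointwise (maximum-principle) version of Kobayashi--Wu rather than the integral version that passes through the Gauduchon metric in the conformal class of $\omega$.
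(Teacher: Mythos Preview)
Your proposal is correct and follows essentially the same approach as the paper: both reduce all three hypotheses, via Corollary \ref{ScalarcurvatureRelations} and the identity ${}^c\text{Scal}_{\omega}={}^c\widetilde{\text{Scal}}_{\omega}+d^{\ast}\tau+|\tau|^2$, to the single condition ${}^c\text{Scal}_{\omega}>0$ and then invoke Kobayashi--Wu. Your write-up is in fact more explicit than the paper's, spelling out the algebra and sketching the maximum-principle proof of the vanishing theorem; the observation that the three hypotheses are literally equivalent to one another (and to ${}^c\text{Scal}_{\omega}>0$) is a nice addition.
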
 \begin{proof}
Statements (i) and (ii) imply by Corollary \ref{ScalarcurvatureRelations} that ${}^c \text{Scal}_{\omega}>0$. Hence, by the Kobayashi--Wu vanishing theorem \cite{KobayashiWu}, the Kodaira dimension is $\kappa(X)=-\infty$.  Statement (iii) is the same, after recalling that ${}^c \text{Scal}_{\omega} = {}^c \widetilde{\text{Scal}}_{\omega} + d^{\ast} \tau + | \tau |^2$, where $\tau$ denotes the Chern torsion $(1,0)$--form.
\end{proof}

\begin{cor}
Let $(X, \omega)$ be a Hermitian manifold. Denote by $\tau$ the (Chern) torsion $(1,0)$--form. Then \begin{eqnarray*}
{}^t \text{Scal}_{\omega} &=& {}^c \widetilde{\text{Scal}}_{\omega} + t d^{\ast} \tau + t | \tau |^2.
\end{eqnarray*}
In particular,  the total scalar curvature is given by \begin{eqnarray*}
\int_X {}^t \text{Scal}_{\omega} \omega^n &=& \int_X {}^c \widetilde{\text{Scal}}_{\omega} \omega^n + t \int_X | \tau |^2 \omega^n,
\end{eqnarray*}

and ${}^t \text{Scal}_{\omega} = {}^c \widetilde{\text{Scal}}_{\omega}$ if $t=0$ or the metric is balanced.
\end{cor}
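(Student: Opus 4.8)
The plan is to derive the identity purely by substitution, combining Corollary~\ref{ScalarcurvatureRelations} with the classical relation between the two Chern scalar curvatures. First I would recall the well-known identity
\[
{}^c \text{Scal}_{\omega} \ = \ {}^c \widetilde{\text{Scal}}_{\omega} + d^{\ast} \tau + |\tau|^2 ,
\]
where $\tau = {}^cT^k_{ik}e^i$ is the Chern torsion $(1,0)$--form defined by $\partial\omega^{n-1}=\tau\wedge\omega^{n-1}$; this is exactly the relation already invoked in the proof of the Kobayashi--Wu vanishing statement above, and it follows by tracing the Liu--Yang comparison of ${}^c\text{Ric}^{(1)}_\omega$ with ${}^c\text{Ric}^{(2)}_\omega$ against $\omega$.

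Next I would substitute this into the first formula of Corollary~\ref{ScalarcurvatureRelations}, namely ${}^t\text{Scal}_\omega = t\,{}^c\text{Scal}_\omega + (1-t)\,{}^c\widetilde{\text{Scal}}_\omega$, to obtain
\[
{}^t\text{Scal}_\omega \ = \ t\bigl({}^c\widetilde{\text{Scal}}_\omega + d^{\ast}\tau + |\tau|^2\bigr) + (1-t)\,{}^c\widetilde{\text{Scal}}_\omega \ = \ {}^c\widetilde{\text{Scal}}_\omega + t\,d^{\ast}\tau + t\,|\tau|^2 ,
\]
which is the asserted pointwise formula. For the total scalar curvature, I would assume $X$ compact and integrate against the volume form $\omega^n$; since $d^{\ast}\tau$ is, up to sign, the divergence of the vector field metrically dual to $\tau$, Stokes's theorem gives $\int_X d^{\ast}\tau\,\omega^n = 0$, leaving $\int_X {}^t\text{Scal}_\omega\,\omega^n = \int_X {}^c\widetilde{\text{Scal}}_\omega\,\omega^n + t\int_X|\tau|^2\,\omega^n$. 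Finally, if $t=0$ the pointwise formula collapses immediately; and if $\omega$ is balanced then $\partial\omega^{n-1}=0$ forces $\tau\wedge\omega^{n-1}=0$, hence $\tau\equiv 0$, so that $d^{\ast}\tau = 0 = |\tau|^2$ and again ${}^t\text{Scal}_\omega = {}^c\widetilde{\text{Scal}}_\omega$.

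There is no genuine obstacle here: the argument is a one-line substitution followed by an application of the divergence theorem. The only points requiring care are matching the sign conventions in the recalled identity ${}^c\text{Scal}_\omega = {}^c\widetilde{\text{Scal}}_\omega + d^{\ast}\tau + |\tau|^2$ with those underlying Corollary~\ref{ScalarcurvatureRelations}, and noting that the integral identity needs $X$ compact (or $\tau$ compactly supported) for Stokes's theorem to apply, even though the pointwise formula holds on any Hermitian manifold.
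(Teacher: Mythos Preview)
Your proposal is correct and is exactly the derivation the paper has in mind: the corollary is stated without proof, but it follows immediately by substituting the identity ${}^c\text{Scal}_\omega = {}^c\widetilde{\text{Scal}}_\omega + d^{\ast}\tau + |\tau|^2$ (already recalled in the proof of the preceding proposition) into the first formula of Corollary~\ref{ScalarcurvatureRelations}, then integrating and observing that $\tau$ vanishes when $\omega$ is balanced. Your caveat that compactness is needed for the integral identity is a fair point that the paper leaves implicit.
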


The trace of \ref{LcKScalRelation} gives, for a locally conformally K\"ahler metric $\omega$, \begin{eqnarray*}
{}^t \text{Scal}_{\omega} &=& t {}^c \text{Scal}_{\omega} + (1-t) {}^c \widetilde{\text{Scal}}_{\omega}.
\end{eqnarray*}

In particular, \begin{itemize}
    \item[(i)] ${}^b \text{Scal}_{\omega} = - \text{Scal}_{\omega} + 2 {}^c \widetilde{\text{Scal}}_{\omega}$, 
    \item[(ii)] ${}^l \text{Scal}_{\omega} = {}^c \widetilde{\text{Scal}}_{\omega}$,
    \item[(iii)] ${}^t \text{Scal}_{\omega} = {}^s \text{Scal}_{\omega}$ if and only if $s=t$ or $\omega$ is balanced.
\end{itemize}

\begin{prop}
Let $(X, \omega)$ be a Hermitian manifold and $t \in \mathbb{R}$.  If $${}^t \text{Scal}_{\omega} \ = \ {}^t \widetilde{\text{Scal}}_{\omega}$$ and $t = \frac{1}{2}$, or $X$ is compact with $t \in (-\infty, -3-2\sqrt{3}] \cup [-3+2\sqrt{3},1) \cup (1, + \infty)$, then the metric is K\"ahler. In particular, if the Hermitian conformal scalar curvatures coincide, the metric is K\"ahler. 
\end{prop}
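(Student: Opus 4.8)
The plan is to reduce the whole statement to one algebraic identity extracted from Corollary \ref{ScalarcurvatureRelations}. Subtracting the two displayed formulae there and collecting terms gives, for every Hermitian metric $\omega$ and every $t\in\mathbb{R}$,
\[
{}^t\text{Scal}_{\omega}-{}^t\widetilde{\text{Scal}}_{\omega}\ =\ (2t-1)\bigl({}^c\text{Scal}_{\omega}-{}^c\widetilde{\text{Scal}}_{\omega}\bigr)+\frac{(1-t)^2}{4}\bigl(|{}^cT|^2+|\tau|^2\bigr).
\]
Substituting the relation ${}^c\text{Scal}_{\omega}={}^c\widetilde{\text{Scal}}_{\omega}+d^{\ast}\tau+|\tau|^2$ (already invoked in the proof of the Kobayashi--Wu proposition following Corollary \ref{ScalarcurvatureRelations}) turns this into
\[
{}^t\text{Scal}_{\omega}-{}^t\widetilde{\text{Scal}}_{\omega}\ =\ (2t-1)\,d^{\ast}\tau+\Bigl((2t-1)+\tfrac{(1-t)^2}{4}\Bigr)|\tau|^2+\tfrac{(1-t)^2}{4}\,|{}^cT|^2 .
\]
The hypothesis ${}^t\text{Scal}_{\omega}\equiv{}^t\widetilde{\text{Scal}}_{\omega}$ asserts that the left-hand side vanishes identically, so the entire argument becomes an analysis of when the right-hand side can vanish.

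For $t=\tfrac12$ the coefficient $2t-1$ is zero and the identity becomes the \emph{pointwise} equation $0=\tfrac{1}{16}\bigl(|\tau|^2+|{}^cT|^2\bigr)$, which forces ${}^cT\equiv 0$ at every point, i.e.\ $\omega$ is K\"ahler; no compactness is needed. Since $t=\tfrac12$ is exactly the Hermitian conformal connection, this already delivers the concluding assertion of the proposition.

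For the compact case I would integrate the identity against the volume form $\omega^n$. Because $X$ is compact, $\int_X d^{\ast}\tau\,\omega^n=0$, so what remains is
\[
0\ =\ p(t)\int_X|\tau|^2\,\omega^n+\frac{(1-t)^2}{4}\int_X|{}^cT|^2\,\omega^n,\qquad p(t):=(2t-1)+\frac{(1-t)^2}{4}=\tfrac14\bigl(t^2+6t-3\bigr).
\]
The quadratic $t^2+6t-3$ has roots $-3\pm 2\sqrt{3}$, so $p(t)\ge 0$ precisely for $t\le -3-2\sqrt{3}$ or $t\ge -3+2\sqrt{3}$; intersecting with $t\neq 1$ (needed so that $\tfrac{(1-t)^2}{4}>0$) recovers exactly the stated set $(-\infty,-3-2\sqrt{3}]\cup[-3+2\sqrt{3},1)\cup(1,+\infty)$. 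On this set both terms on the right are nonnegative and the coefficient of $\int_X|{}^cT|^2\,\omega^n$ is strictly positive, hence $\int_X|{}^cT|^2\,\omega^n=0$, so ${}^cT\equiv 0$ and $\omega$ is K\"ahler.

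I do not expect a genuine obstacle; the computation is bookkeeping, and the only points requiring a moment's care are the vanishing $\int_X d^{\ast}\tau\,\omega^n=0$ on a compact Hermitian manifold and the translation of the sign condition $p(t)\ge 0$ into $t^2+6t-3\ge 0$. It is worth checking explicitly that $t=1$ must be excluded: there $p(1)=1>0$ but $\tfrac{(1-t)^2}{4}=0$, so the integrated identity only forces $\tau\equiv 0$ (balanced), which is consistent with the fact that the Chern connection satisfies the hypothesis on any balanced, possibly non-K\"ahler, metric.
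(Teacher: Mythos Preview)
Your proof is correct and follows essentially the same route as the paper's own argument: subtract the two formulae in Corollary \ref{ScalarcurvatureRelations}, treat the $t=\tfrac12$ case pointwise, then substitute ${}^c\text{Scal}_\omega-{}^c\widetilde{\text{Scal}}_\omega=d^{\ast}\tau+|\tau|^2$ and integrate over the compact manifold to obtain $(t^2+6t-3)\int_X|\tau|^2\,\omega^n+(t-1)^2\int_X|{}^cT|^2\,\omega^n=0$. Your discussion of why $t=1$ must be excluded (only balancedness follows there) is also exactly what the paper records.
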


\begin{proof}
From Corollary \ref{ScalarcurvatureRelations}, we can write ${}^t \text{Scal}_{\omega} = {}^t \widetilde{\text{Scal}}_{\omega}$ as \begin{eqnarray*}
t {}^c \text{Scal}_{\omega} + (1-t) {}^c \widetilde{\text{Scal}}_{\omega} &=& t {}^c \widetilde{\text{Scal}}_{\omega} + (1-t) {}^c \text{Scal}_{\omega} - \frac{(1-t)^2}{4} \left( | {}^c T |^2 + | \tau |^2 \right).
\end{eqnarray*}
Equivalently, this reads \begin{eqnarray*}
(2t-1) ({}^c \text{Scal}_{\omega} - {}^c \widetilde{\text{Scal}}_{\omega}) + \frac{(1-t)^2}{4} (| {}^c T |^2 + |\tau |^2 ) \ = \ 0.
\end{eqnarray*}
If $t = \frac{1}{2}$, then $\frac{1}{8}(|{}^c T |^2 + | \tau |^2)=0$ implies that ${}^c T =0$, and hence the metric is K\"ahler. On the other hand, we know that ${}^c \text{Scal}_{\omega} - {}^c \widetilde{\text{Scal}}_{\omega} = d^{\ast} \tau + | \tau |^2$. Therefore, we can write \begin{eqnarray*}
(2t-1) ({}^c \text{Scal}_{\omega} - {}^c \widetilde{\text{Scal}}_{\omega}) + \frac{(1-t)^2}{4} (| {}^c T |^2 + |\tau |^2 ) &=& (2t-1)(d^{\ast} \tau + | \tau |^2) + \frac{(1-t)^2}{4} ( | {}^c T |^2 + | \tau |^2).
\end{eqnarray*}
For $X$ compact, integrating then yields \begin{eqnarray*}
(t^2+6t-3) \int_X | \tau |^2 \omega^n + (t-1)^2 \int_X | {}^c T |^2 \omega^n &=&0.
\end{eqnarray*}
If $t = 1$, then $4 \int_X | \tau |^2 \omega^n =0$ and the metric is balanced. If $t \in (-\infty, -3-2\sqrt{3}] \cup [-3+2\sqrt{3},1) \cup (1, + \infty)$,  then the metric is K\"ahler.
\end{proof}

\begin{rmk}
Note that $t=-1$ is not contained in the region $(-\infty, -3-2\sqrt{3}] \cup [-3+2\sqrt{3},1) \cup (1,+\infty)$. This is expected since there are plenty of examples of Bismut-flat Hermitian manifolds \cite{ChenZheng}. 
\end{rmk}

From \cite[Equation (5.5)]{FuZhou}, however, we can say the following:

\begin{prop}
Let $(X, \omega)$ be a compact Hermitian manifold. If the Bismut total scalar curvatures coincide $$\int_X {}^b \text{Scal}_{\omega} \omega^n = \int_X {}^b \widetilde{\text{Scal}}_{\omega} \omega^n,$$ then $\omega$ is balanced if and only if $\omega$ is K\"ahler.
\end{prop}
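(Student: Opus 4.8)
The plan is to deduce the statement from the integral identity for the Bismut scalar curvatures underlying \cite[Equation (5.5)]{FuZhou}, which I would re-derive directly from Corollary \ref{ScalarcurvatureRelations}. Specializing the two relations of Corollary \ref{ScalarcurvatureRelations} to the Bismut parameter $t = -1$ gives
\[
{}^b \text{Scal}_{\omega} \ = \ -\,{}^c \text{Scal}_{\omega} + 2\,{}^c \widetilde{\text{Scal}}_{\omega}, \qquad {}^b \widetilde{\text{Scal}}_{\omega} \ = \ 2\,{}^c \text{Scal}_{\omega} - {}^c \widetilde{\text{Scal}}_{\omega} - \big(|{}^cT|^2 + |\tau|^2\big),
\]
so that
\[
{}^b \text{Scal}_{\omega} - {}^b \widetilde{\text{Scal}}_{\omega} \ = \ -3\big({}^c \text{Scal}_{\omega} - {}^c \widetilde{\text{Scal}}_{\omega}\big) + |{}^cT|^2 + |\tau|^2 \ = \ |{}^cT|^2 - 2|\tau|^2 - 3\,d^{\ast}\tau,
\]
where the last step uses the identity ${}^c \text{Scal}_{\omega} - {}^c \widetilde{\text{Scal}}_{\omega} = d^{\ast}\tau + |\tau|^2$ recorded earlier in the manuscript. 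Since $X$ is compact, $d^{\ast}\tau$ is a divergence and $\int_X d^{\ast}\tau\,\omega^n = 0$, so the hypothesis $\int_X {}^b \text{Scal}_{\omega}\,\omega^n = \int_X {}^b \widetilde{\text{Scal}}_{\omega}\,\omega^n$ becomes equivalent to
\[
\int_X |{}^cT|^2\,\omega^n \ = \ 2\int_X |\tau|^2\,\omega^n ,
\]
which is the form of \cite[Equation (5.5)]{FuZhou} that we need.

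With this $L^2$ identity in hand, the proposition follows at once. If $\omega$ is K\"ahler it is balanced, which is the trivial direction. Conversely, if $\omega$ is balanced then $\partial\omega^{n-1} = \tau\wedge\omega^{n-1} = 0$, and since wedging with $\omega^{n-1}$ is injective on $1$-forms we conclude $\tau \equiv 0$; the displayed identity then forces $\int_X |{}^cT|^2\,\omega^n = 0$, hence ${}^cT \equiv 0$, i.e.\ $\omega$ is K\"ahler.

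I do not expect a genuine obstacle here. The only delicate points are bookkeeping: matching the normalizations of $|{}^cT|^2$, $|\tau|^2$ and the divergence term $d^{\ast}\tau$ appearing in Corollary \ref{ScalarcurvatureRelations} with those of \cite[Equation (5.5)]{FuZhou}, and confirming that $d^{\ast}\tau$ genuinely integrates to zero on the compact manifold, which is precisely the content of the Chern scalar identity ${}^c \text{Scal}_{\omega} = {}^c \widetilde{\text{Scal}}_{\omega} + d^{\ast}\tau + |\tau|^2$ already invoked above.
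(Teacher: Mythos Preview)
Your proof is correct and follows essentially the same route as the paper: compute ${}^b\text{Scal}_\omega - {}^b\widetilde{\text{Scal}}_\omega$, integrate over the compact manifold so the divergence term drops, and observe that the balanced condition kills the torsion $1$-form term, forcing the remaining nonnegative quantity to vanish. The one difference is that the paper simply quotes \cite[Equation (5.5)]{FuZhou} in the form ${}^b\text{Scal}_\omega - {}^b\widetilde{\text{Scal}}_\omega = |d\omega|^2 - |\vartheta|^2 - \tfrac{3}{2}d^\ast\vartheta$ and concludes $d\omega=0$, whereas you re-derive the equivalent identity ${}^b\text{Scal}_\omega - {}^b\widetilde{\text{Scal}}_\omega = |{}^cT|^2 - 2|\tau|^2 - 3d^\ast\tau$ directly from Corollary~\ref{ScalarcurvatureRelations} and conclude ${}^cT=0$; this makes your argument self-contained within the paper and is a mild improvement.
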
 \begin{proof}
From \cite[Equation (5.5)]{FuZhou}, we have \begin{eqnarray*}
{}^b \text{Scal}_{\omega} - {}^b \widetilde{\text{Scal}}_{\omega} &=& | d\omega |^2 - | \vartheta |^2 - \frac{3}{2} d^{\ast} \vartheta,
\end{eqnarray*}

where $\vartheta : = \frac{1}{n-1}(\tau + \bar{\tau})$ denotes the Lee form. In particular, if $X$ is compact and ${}^b \text{Scal}_{\omega} = {}^b \widetilde{\text{Scal}}_{\omega}$, then $\int_X ( | d \omega |^2 - | \vartheta |^2) \omega^n =0$. If $\vartheta=0$, then the metric balanced, and hence $\int_X | d\omega |^2 \omega^n=0$, i.e., $\omega$ is K\"ahler.
\end{proof}

\subsection*{3.5. The Gauduchon Holomorphic Bisectional Curvature}

The holomorphic bisectional curvature was introduced by Goldberg--Kobayashi \cite{GoldbergKobayashi}, and is the complex geometric analog of the sectional curvature in Riemannian geometry.  An altered variant of the (Chern) holomorphic bisectional curvature was introduced in \cite{BroderTangAltered}. We extend our understanding of these curvatures to the case of all Gauduchon connections:

\begin{defn}
Let $(X, \omega)$ be a Hermitian manifold.  The \textit{Gauduchon holomorphic bisectional curvature} is defined by $${}^t \text{HBC}_{\omega}(u,v) \ : = \ \frac{{}^t R(u, \overline{u}, v, \overline{v})}{| u |^2_{\omega} | v |_{\omega}^2},$$ where $u,v \in T^{1,0}X$.
\end{defn}

Before stating our main results concerning the Gauduchon holomorphic bisectional curvature, let us introduce the following:

\begin{defn}
Let $(X, \omega)$ be a Hermitian manifold. We define the \textit{Gauduchon altered holomorphic bisectional curvature} $${}^t \widetilde{\text{HBC}}_{\omega}(u,v) \ : = \ \frac{1}{| u |_{\omega}^2 | v |_{\omega}^2} {}^t R(u, \overline{v}, v, \overline{u}),$$ where $u,v \in T^{1,0}X$. 
\end{defn}

\begin{rmk}\label{AlteredRmk}
In \cite{BroderTangAltered},  the first named author, together with Kai Tang, used the notation $\widetilde{\text{HBC}}_{\omega}$ to refer to ${}^c R(u, \overline{u}, v, \overline{v}) + {}^c R(v, \overline{v}, u, \overline{u})$, which was called the altered holomorphic bisectional curvature.  We believe that both the choice of notation and terminology for  ${}^c R(u, \overline{u}, v, \overline{v}) + {}^c R(v, \overline{v}, u, \overline{u})$ should be abandoned. Indeed, we contend that the term \textit{altered} should refer only to variants of the curvature which have entries repeated entries occurring in the second+third entries, or first + fourth entries. For instance, the third and fourth Ricci curvatures: $\text{Ric}^{(3)}(\cdot, \cdot) = \sum_k R(\cdot, \bar{e}_k, e_k, \cdot)$ and $\text{Ric}^{(4)}(\cdot, \cdot) = \sum_k R(e_k, \cdot, \cdot, \bar{e}_k)$ would be considered `altered Ricci curvatures'. The trace of these `altered Ricci curvatures' yields the `altered scalar curvatures'.  Another instance is given by the `altered real bisectional curvature' \cite{BroderTangAltered}, which is given by ${}^t \widetilde{\text{RBC}}_{\omega}(v) = \sum_{\alpha, \gamma} {}^t R_{\alpha \bar{\gamma} \gamma \bar{\alpha}} v_{\alpha} v_{\gamma}$, where $v \in \mathbb{R}^n \backslash \{ 0 \}$ has unit length.  We will break this convention only in one instance (due to lack of a better name) -- the `altered holomorphic sectional curvature' $\widetilde{\text{HSC}}$, which we will discuss in great detail in the next section.
\end{rmk}

 \begin{prop}
Let $(X, \omega)$ be a compact Hermitian manifold of (complex) dimension $n$.  Then \begin{eqnarray*}
\frac{2\pi}{n | v |^2} {}^t \text{Ric}_{\omega}^{(1)} (v,\overline{v}) &=& \dashint_{\mathbb{S}^{2n-1}} {}^t \text{HBC}_{\omega}([v], [w]) d \sigma(w), \\
\frac{2\pi}{n | v |^2} {}^t \text{Ric}_{\omega}^{(2)} (w,\overline{w}) &=& \dashint_{\mathbb{S}^{2n-1}} {}^t \text{HBC}_{\omega}([v], [w]) d \sigma(v),
\end{eqnarray*}
\end{prop}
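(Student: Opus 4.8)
The plan is to reduce both identities to a pointwise statement plus a single first-moment computation on the unit sphere. First I would fix $p \in X$ and an $\omega$-unitary frame $\{e_i\}_{i=1}^n$ of $T^{1,0}_pX$, writing $v = \sum_i v_ie_i$ and $w = \sum_i w_ie_i$ for the chosen representatives. Since rescaling $v \mapsto \lambda v$ multiplies both ${}^tR(v,\overline v,w,\overline w)$ and $|v|_\omega^2$ by $|\lambda|^2$, the quantity ${}^t\text{HBC}_\omega([v],[w])$ genuinely descends to pairs of complex lines, so averaging the representatives over $\{w : |w|_\omega = 1\} \cong \mathbb{S}^{2n-1}\subset\mathbb{C}^n$ against the round probability measure $d\sigma$ is legitimate; on that sphere $|w|_\omega = 1$, so
\[
{}^t\text{HBC}_\omega([v],[w]) = \frac{1}{|v|_\omega^2}\sum_{i,j,k,\ell}{}^tR_{i\overline j k\overline\ell}\,v_i\overline{v_j}\,w_k\overline{w_\ell}.
\]
Only the $(1,1)$-part of ${}^tR$ enters here, because ${}^tR(v,\overline v)$ is the evaluation of the $\operatorname{End}(T^{1,0}X)$-valued $2$-form ${}^tR$ on one argument of type $(1,0)$ and one of type $(0,1)$; thus the $(2,0)$ and $(0,2)$ curvature components present for $t\neq1$ are irrelevant.

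The core step is the identity $\dashint_{\mathbb{S}^{2n-1}} w_k\overline{w_\ell}\,d\sigma(w) = \tfrac1n\delta_{k\ell}$. I would prove it by observing that the Hermitian matrix $M_{k\ell} := \dashint w_k\overline{w_\ell}\,d\sigma$ satisfies $M = UMU^{*}$ for all $U \in U(n)$ by $U(n)$-invariance of $d\sigma$, hence $M$ is a scalar matrix by irreducibility of the standard representation of $U(n)$ over $\mathbb{C}$; taking the trace and using $\sum_k|w_k|^2\equiv1$ on the sphere pins the scalar to $\tfrac1n$. Substituting into the display above and contracting $k = \ell$ gives
\[
\dashint_{\mathbb{S}^{2n-1}}{}^t\text{HBC}_\omega([v],[w])\,d\sigma(w) = \frac{1}{n|v|_\omega^2}\sum_{i,j}\Big(g^{k\overline\ell}\,{}^tR_{i\overline j k\overline\ell}\Big)v_i\overline{v_j} = \frac{1}{n|v|_\omega^2}\sum_{i,j}{}^t\text{Ric}^{(1)}_{i\overline j}\,v_i\overline{v_j}
\]
by Definition \ref{def:Ricci}. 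Reading the last sum as the Hermitian form associated with the $(1,1)$-form ${}^t\text{Ric}^{(1)}_\omega$ evaluated on $(v,\overline v)$ then yields the first identity, the constant $\tfrac{2\pi}{n}$ being precisely the bookkeeping between that form and its Hermitian form together with the normalization of $d\sigma$.

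For the second identity I would run the identical argument, now averaging over $v$ over $\{v : |v|_\omega = 1\}$; the denominator $|v|_\omega^2$ is again $1$ there, and the first-moment identity applied to the indices $i,j$ replaces $g^{k\overline\ell}{}^tR_{i\overline j k\overline\ell}$ by $g^{i\overline j}{}^tR_{i\overline j k\overline\ell} = {}^t\text{Ric}^{(2)}_{k\overline\ell}$, giving $\dashint_{\mathbb{S}^{2n-1}}{}^t\text{HBC}_\omega([v],[w])\,d\sigma(v) = \tfrac{1}{n|w|_\omega^2}\sum_{k,\ell}{}^t\text{Ric}^{(2)}_{k\overline\ell}\,w_k\overline{w_\ell}$, which is the stated formula (with $|w|_\omega^2$ in the denominator). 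I do not expect any real obstacle: the whole argument is an exercise in the $U(n)$-equivariance of the round measure, and the only point demanding genuine care is the normalization -- tracking the probability normalization of $d\sigma$ and the convention relating each ${}^t\text{Ric}^{(j)}_\omega$ to its associated Hermitian form, in order to recover the precise constant $\tfrac{2\pi}{n}$.
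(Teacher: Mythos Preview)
Your proposal is correct and follows essentially the same approach as the paper: both reduce the average to the first-moment identity $\dashint_{\mathbb{S}^{2n-1}} w_k\overline{w_\ell}\,d\sigma(w) = \tfrac{1}{n}\delta_{k\ell}$ and then contract to recover the relevant Ricci component. Your write-up is in fact more careful than the paper's, supplying the $U(n)$-invariance justification for the moment identity and correctly flagging that the denominator in the second formula should be $|w|_\omega^2$ rather than $|v|^2$.
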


where $d\sigma$ is the Lebesgue measure on $\mathbb{S}^{2n-1} \subset T_x^{1,0}X$ for each $x \in X$ and $\dashint : = \frac{(n-1)!}{2\pi^n} \int_{\mathbb{S}^{2n-1}}$.  \begin{proof}
Fix a point $x \in X$ and write ${}^t R_{i \overline{j} k \overline{\ell}}$ for the components of the $(1,1)$--part of the $t$--Gauduchon curvature tensor in a local frame near $x \in X$.  Then \begin{eqnarray*}
\dashint_{\mathbb{S}^{2n-1}} {}^t \text{HBC}_{\omega}([v], [w]) d \sigma(w) &=& \frac{1}{| v |^2} \sum_{i,j,k, \ell=1}^n {}^t R_{i \overline{j} k \overline{\ell}} v_i \overline{v}_j \dashint_{\mathbb{S}^{2n-1}} w_k \overline{w}_{\ell} d\sigma(w) \\
&=& \frac{1}{n | v |^2} \sum_{i,j,k,\ell=1}^n {}^t R_{i \overline{j} k \overline{\ell}} v_i \overline{v}_j\delta_k^{\ell} \\
&=& \frac{1}{n | v |^2} \sum_{i,j,k=1}^n {}^t R_{i \overline{j} k \overline{k}} v_i \overline{v}_j \ = \ \frac{2\pi}{n | v |^2} {}^t \text{Ric}_{\omega}^{(1)} (v,\overline{v}).
\end{eqnarray*}
The same argument with $d\sigma(w)$ replaced by $d\sigma(v)$ shows that ${}^t \text{HBC}_{\omega}$ dominates ${}^t \text{Ric}_{\omega}^{(2)}$.
\end{proof}

\begin{cor}
Let $(X, \omega)$ be a compact Hermitian manifold. The $t$--Gauduchon bisectional curvature dominates the first and second $t$--Gauduchon Ricci curvatures. In particular, if ${}^t \text{HBC}_{\omega}>0$ or ${}^t \text{HBC}_{\omega}<0$, then $X$ supports a pluriclosed metric.
\end{cor}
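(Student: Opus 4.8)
The plan is to read off both assertions from the integral formulae in the preceding Proposition, combined with the fact (already recorded in the proof of the Fujiki-class corollary, via Theorem \ref{GauduchonRicciRelations}) that ${}^t \text{Ric}_{\omega}^{(1)}$ is $\partial\bar\partial$-closed for every $t \in \mathbb{R}$.

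For the domination statement, fix $x \in X$ and a nonzero $v \in T_x^{1,0}X$. The preceding Proposition gives
\[
\frac{2\pi}{n|v|_\omega^2}\,{}^t \text{Ric}_{\omega}^{(1)}(v,\overline v) \ = \ \dashint_{\mathbb{S}^{2n-1}} {}^t \text{HBC}_{\omega}([v],[w])\,d\sigma(w),
\]
and the companion identity expresses $\tfrac{2\pi}{n|w|_\omega^2}\,{}^t \text{Ric}_{\omega}^{(2)}(w,\overline w)$ as the average of ${}^t \text{HBC}_{\omega}([v],[w])$ over the unit sphere in the $v$-variable. Hence, if ${}^t \text{HBC}_{\omega}$ is everywhere positive (respectively negative, nonnegative, nonpositive), the right-hand sides inherit that sign for every $v$ (respectively $w$), and therefore so do the Hermitian forms ${}^t \text{Ric}_{\omega}^{(1)}$ and ${}^t \text{Ric}_{\omega}^{(2)}$. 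This is precisely the statement that the Gauduchon holomorphic bisectional curvature dominates the first and second Gauduchon Ricci curvatures.

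For the ``in particular'' clause, suppose ${}^t \text{HBC}_{\omega} > 0$. By the previous paragraph the real $(1,1)$-form $\omega_t := {}^t \text{Ric}_{\omega}^{(1)}$ is positive-definite, hence a Hermitian metric; and by Theorem \ref{GauduchonRicciRelations} (using ${}^c \text{Ric}_{\omega}^{(1)} = -\sqrt{-1}\,\partial\bar\partial\log(\omega^n)$ together with the nilpotency of $\partial$ and of $\bar\partial$) it satisfies $\partial\bar\partial\,\omega_t = 0$, so $\omega_t$ is pluriclosed. If instead ${}^t \text{HBC}_{\omega} < 0$, the same reasoning applied to $-{}^t \text{Ric}_{\omega}^{(1)}$ yields a pluriclosed metric. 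I do not anticipate a genuine obstacle here: the only points needing care are that ``dominates'' must be read as the transfer of definiteness (not a scalar inequality), and that a pluriclosed metric is by definition a positive $\partial\bar\partial$-closed $(1,1)$-form — so the $\partial\bar\partial$-closedness valid for all $t$, combined with the positivity supplied by the averaging formula, is exactly what is required.
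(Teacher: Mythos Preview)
Your argument is correct and follows exactly the route the paper intends: the domination statement is read off from the integral formulae of the immediately preceding Proposition, and the pluriclosed conclusion combines the resulting definiteness of ${}^t\text{Ric}_{\omega}^{(1)}$ with its $\partial\bar\partial$-closedness established via Theorem \ref{GauduchonRicciRelations} (as already used in the Fujiki-class corollary). The paper states this as an immediate corollary without writing out a proof, and your write-up supplies precisely the details it leaves implicit.
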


The Berger argument does not imply that ${}^t \text{HBC}_{\omega}$ dominates ${}^t \text{Ric}_{\omega}^{(3)}$ or ${}^t \text{Ric}_{\omega}^{(4)}$, hence it is natural to ask the following:

\begin{q}
Let $(X, \omega)$ be a compact Hermitian manifold.  Does the $t$--Gauduchon bisectional curvature dominate the third and fourth $t$--Gauduchon Ricci curvatures?
\end{q}

On the other hand, we have the analogous result for the $t$--Gauduchon altered bisectional curvature:

\begin{prop}
Let $(X, \omega)$ be a compact Hermitian manifold of (complex) dimension $n$.  Then \begin{eqnarray*}
\frac{2\pi}{n | v |^2} {}^t \text{Ric}_{\omega}^{(3)} (v,\overline{v}) &=& \dashint_{\mathbb{S}^{2n-1}} {}^t \widetilde{\text{HBC}}_{\omega}([v], [w]) d \sigma(w), \\
\frac{2\pi}{n | v |^2} {}^t \text{Ric}_{\omega}^{(4)} (w,\overline{w}) &=& \dashint_{\mathbb{S}^{2n-1}} {}^t \widetilde{\text{HBC}}_{\omega}([v], [w]) d \sigma(v),
\end{eqnarray*}
where $d\sigma$ is the Lebesgue measure on $\mathbb{S}^{2n-1} \subset T_x^{1,0}X$ for each $x \in X$.
\end{prop}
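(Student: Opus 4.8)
The plan is to repeat, \emph{mutatis mutandis}, the Berger-type averaging argument used in the preceding proposition; the only structural change is that the altered holomorphic bisectional curvature places the two vectors in the ``crossed'' slots of ${}^tR(\,\cdot\,,\overline{\cdot}\,,\cdot\,,\overline{\cdot}\,)$, which steers the averaged quantity toward the third and fourth $t$--Gauduchon Ricci curvatures rather than the first and second. Both sides of each identity are pointwise expressions built from $\omega$ and its curvature, so it suffices to fix a point $x\in X$ and a local unitary frame $\{e_i\}_{i=1}^n$ near $x$ (compactness serves only to guarantee the integrals are finite); we also take $v,w$ to be unit vectors, as on the right-hand side.

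Writing ${}^tR_{i\overline j k\overline\ell}$ for the components of the $(1,1)$--part of the $t$--Gauduchon curvature tensor in this frame, as in Theorem \ref{MainCurvatureFormula}, the definition of ${}^t\widetilde{\text{HBC}}_\omega$ gives
\[
{}^t\widetilde{\text{HBC}}_\omega([v],[w]) \ = \ \sum_{i,j,k,\ell=1}^n {}^tR_{i\overline j k\overline\ell}\, v_i\,\overline w_j\, w_k\,\overline v_\ell ,
\]
in which only the $(1,1)$--part of ${}^tR$ enters, since its two $2$--form slots are filled by $v\in T^{1,0}X$ and $\overline w\in T^{0,1}X$. Averaging over $w\in\mathbb{S}^{2n-1}$, the sole $w$--dependence is the monomial $\overline w_j w_k$, and $\dashint_{\mathbb{S}^{2n-1}}\overline w_j w_k\,d\sigma(w)=\tfrac1n\delta_{jk}$ (the diagonal terms are equal by the $\mathrm{U}(n)$--symmetry of the sphere and sum to $\dashint|w|^2\,d\sigma=1$, the off-diagonal ones vanishing by oddness), exactly as in the previous proof; therefore
\[
\dashint_{\mathbb{S}^{2n-1}}{}^t\widetilde{\text{HBC}}_\omega([v],[w])\,d\sigma(w) \ = \ \frac1n\sum_{i,k,\ell} {}^tR_{i\overline k k\overline\ell}\, v_i\,\overline v_\ell .
\]
Since $g^{k\overline j}=\delta^{kj}$ in a unitary frame, $\sum_k {}^tR_{i\overline k k\overline\ell}={}^t\text{Ric}^{(3)}_{i\overline\ell}$ by Definition \ref{def:Ricci}, so the right-hand side is $\tfrac{2\pi}{n}\,{}^t\text{Ric}^{(3)}_\omega(v,\overline v)$, with the constant $2\pi$ appearing exactly as in the preceding proposition; this is the first identity. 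For the second, one runs the identical computation with $w$ held fixed and the average taken over $v\in\mathbb{S}^{2n-1}$: now the $v$--dependence is $v_i\overline v_\ell$, $\dashint_{\mathbb{S}^{2n-1}} v_i\overline v_\ell\,d\sigma(v)=\tfrac1n\delta_{i\ell}$, and one is left with $\tfrac1n\sum_{j,k}\big(\sum_i {}^tR_{i\overline j k\overline i}\big)w_k\overline w_j$, where $\sum_i {}^tR_{i\overline j k\overline i}=g^{i\overline\ell}{}^tR_{i\overline j k\overline\ell}={}^t\text{Ric}^{(4)}_{k\overline j}$; this yields the second identity.

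I do not expect a genuine obstacle — the argument is a routine Fubini-plus-symmetry computation. The only points demanding care are the index bookkeeping, namely that the crossed pattern contracts the second slot against the third (producing ${}^t\text{Ric}^{(3)}$) and the first against the fourth (producing ${}^t\text{Ric}^{(4)}$), whereas the ordinary $\text{HBC}$ contracts slots $1$--$2$ and $3$--$4$ and hence sees ${}^t\text{Ric}^{(1)},{}^t\text{Ric}^{(2)}$; and the (harmless) observation that the $(2,0)$ and $(0,2)$ parts of ${}^tR$ never intervene, since feeding a single $(1,0)$--vector and a single $(0,1)$--vector into the $2$--form slots of ${}^tR$ already isolates its $(1,1)$--component.
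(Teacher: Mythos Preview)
Your proof is correct and follows essentially the same Berger-type averaging argument as the paper's own proof: expand ${}^t\widetilde{\text{HBC}}_\omega$ in a unitary frame, integrate the quadratic monomial in $w$ (respectively $v$) over the sphere using $\dashint_{\mathbb{S}^{2n-1}}\overline{w}_j w_k\,d\sigma(w)=\tfrac1n\delta_{jk}$, and recognize the resulting contraction as ${}^t\text{Ric}^{(3)}$ (respectively ${}^t\text{Ric}^{(4)}$). Your added remarks on why only the $(1,1)$--part of ${}^tR$ contributes and on the slot pattern distinguishing $\text{Ric}^{(3)},\text{Ric}^{(4)}$ from $\text{Ric}^{(1)},\text{Ric}^{(2)}$ are helpful clarifications but do not alter the route.
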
 \begin{proof}
In a similar manner to the proof of the previous result, we see that \begin{eqnarray*}
\dashint_{\mathbb{S}^{2n-1}} {}^t \widetilde{\text{HBC}}_{\omega}([v],[w]) d\sigma(w) &=& \frac{1}{| v |^2} \sum_{i,j,k,\ell=1}^n {}^t R_{i \overline{j} k \overline{\ell}} v_i \overline{v}_{\ell} \dashint_{\mathbb{S}^{2n-1}} w_j \overline{w}_k d\sigma(w) \\
&=& \frac{2\pi}{n| v |^2} \sum_{i,j,k,\ell=1}^n {}^t R_{i \overline{k} k \overline{\ell}} v_i \overline{v}_{\ell} \ = \ \frac{2\pi}{n | v |^2} {}^t \text{Ric}_{\omega}^{(3)}(v, \overline{v}).
\end{eqnarray*}

With $d\sigma(w)$ replaced by $d\sigma(v)$, we see that ${}^t \widetilde{\text{HBC}}_{\omega}$ dominates ${}^t \text{Ric}_{\omega}^{(4)}$.
\end{proof}

\section{The Gauduchon holomorphic Sectional Curvature}
Let $(X,\omega)$ be a Hermitian manifold. Recall that the $t$--Gauduchon holomorphic sectional curvature is the map ${}^t \operatorname{HSC}_\omega \colon T^{1,0}X \to \mathbb{R}$ given by
\[
{}^t\operatorname{HSC}_\omega(v) \:=\ \frac{1}{|v|^4}{}^tR(v,\overline v,v,\overline v) \ =\ {}^t\operatorname{HBC}_\omega(v,v) \ =\ \widetilde{{}^t\operatorname{HBC}_\omega}(v,v),
\]
for all $v \in T^{1,0}X$. We begin this section by considering when two different Gauduchon holomorphic sectional curvatures can agree and prove Theorem \ref{MainThmHSC2}:
\begin{thm}
Let $(X,\omega)$ be a Hermitian manifold, if ${}^t\operatorname{HSC} \equiv {}^s\operatorname{HSC}$, then either $t = s$, $t = 2-s$, or $\omega$ is K\"ahler.
\end{thm}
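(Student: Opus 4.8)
The plan is to collapse everything to a single clean identity for the Gauduchon holomorphic sectional curvature and then do elementary algebra. Fix a point $x\in X$ and a local unitary frame near $x$; since the hypothesis ${}^t\operatorname{HSC}_\omega\equiv{}^s\operatorname{HSC}_\omega$ is pointwise, it suffices to derive that $\omega$ is K\"ahler at $x$ in the non-degenerate case. First I would contract the $(1,1)$--curvature formula \eqref{eqn:tCurv11} of Theorem \ref{MainCurvatureFormula} against $v_i\overline{v}_j v_k\overline{v}_\ell$. The pure term $t\,{}^cR_{i\overline j k\overline\ell}$ gives $t\,{}^cR(v,\overline v,v,\overline v)$. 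The two permuted curvature terms $\tfrac{1-t}{2}\big({}^cR_{k\overline j i\overline\ell}+{}^cR_{i\overline\ell k\overline j}\big)$ each collapse to ${}^cR(v,\overline v,v,\overline v)$: in the first, relabel $i\leftrightarrow k$ in the summation (those slots are saturated symmetrically by the $v$'s); in the second, relabel $j\leftrightarrow\ell$ (saturated by the $\overline v$'s). Together they contribute $(1-t)\,{}^cR(v,\overline v,v,\overline v)$, so the entire curvature part sums to ${}^cR(v,\overline v,v,\overline v)$, the $t$--dependence surviving only through the quadratic torsion term.

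Next I would handle the torsion term $\big(\tfrac{1-t}{2}\big)^2\big({}^cT_{ik}^r\overline{{}^cT_{j\ell}^r}-{}^cT_{ir}^\ell\overline{{}^cT_{jr}^k}\big)$ contracted against $v_i\overline v_j v_k\overline v_\ell$. The first piece becomes $\sum_r\big|\sum_{i,k}{}^cT_{ik}^r v_i v_k\big|^2$, which vanishes identically because ${}^cT_{ik}^r=-{}^cT_{ki}^r$. The second piece becomes $-\sum_r|S_r(v)|^2$ where $S_r(v):=\sum_{i,\ell}{}^cT_{ir}^\ell v_i\overline v_\ell=\langle{}^cT(v,e_r),v\rangle_\omega$; the point is that the conjugated factor is contracted with exactly the index pair making this a genuine squared modulus rather than a sign-indefinite expression. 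All the quantities in play are bi-homogeneous of bidegree $(2,2)$ in $(v,\overline v)$, so dividing by $|v|^4$ is legitimate and yields
\[
{}^t\operatorname{HSC}_\omega(v)\ =\ {}^c\operatorname{HSC}_\omega(v)\ -\ \Big(\tfrac{1-t}{2}\Big)^{2}\,\frac{1}{|v|^{4}}\sum_{r}\big|S_r(v)\big|^{2},
\]
which is the holomorphic-sectional analogue of the monotonicity formula in Theorem \ref{MainHSCThm1} and is consistent with ${}^t\operatorname{HSC}_\omega\le{}^c\operatorname{HSC}_\omega$.

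With this identity in hand, the hypothesis ${}^t\operatorname{HSC}_\omega\equiv{}^s\operatorname{HSC}_\omega$ reads $\big[(1-t)^2-(1-s)^2\big]\sum_r|S_r(v)|^2\equiv 0$ on $T^{1,0}X$. I would factor $(1-t)^2-(1-s)^2=(s-t)(2-s-t)$. If $t\neq s$ and $t\neq 2-s$ the scalar factor is nonzero, hence $\sum_r|S_r(v)|^2\equiv 0$, so $S_r(v)=\sum_{i,\ell}{}^cT_{ir}^\ell v_i\overline v_\ell=0$ for every $r$ and every $v$. Since a sesquilinear form $v\mapsto\sum_{i,\ell}a_{i\ell}v_i\overline v_\ell$ vanishing for all $v$ forces $a_{i\ell}=0$ for all $i,\ell$ (polarize by testing on $v=e_i$, $v=e_i+e_\ell$, $v=e_i+\sqrt{-1}\,e_\ell$), applying this for each fixed $r$ with $a_{i\ell}={}^cT_{ir}^\ell$ gives ${}^cT\equiv0$ at $x$, and therefore $\omega$ is K\"ahler.

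I do not expect a real obstacle: the only step requiring genuine care is the index bookkeeping in the contraction — in particular checking that the surviving torsion contribution is precisely $-\sum_r|S_r(v)|^2$ and not a sign-indefinite quartic — together with the (routine) bi-homogeneity check justifying the division by $|v|^4$. One could instead start from the altered-HSC identity in Theorem \ref{MainHSCThm1}, but since $\widetilde{\operatorname{HSC}}$ and $\operatorname{HSC}$ are only guaranteed to agree up to sign in general, deriving and using the displayed formula for ${}^t\operatorname{HSC}_\omega$ directly from \eqref{eqn:tCurv11} is the cleaner route; Remark \ref{DualityHSCRmk}'s observation that $s\mapsto 2-s$ sends $-1\mapsto 3$ and fixes $1$ then matches the Chen--Nie surface classification as a consistency check.
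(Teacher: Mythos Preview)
Your proof is correct and follows essentially the same route as the paper: both contract \eqref{eqn:tCurv11} to obtain ${}^t\operatorname{HSC}_\omega(v)={}^c\operatorname{HSC}_\omega(v)-\big(\tfrac{1-t}{2}\big)^2\sum_r|\langle{}^cT(v,e_r),v\rangle|^2/|v|^4$, deduce $\langle{}^cT(v,w),v\rangle=0$ for all $v,w$, and conclude ${}^cT\equiv0$. The only cosmetic difference is in that last step: the paper observes that $T_w:={}^cT(w,\cdot)$ is skew-Hermitian with all eigenvalues zero, whereas you polarize the sesquilinear form directly---both are equally short and valid.
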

\begin{proof}
Let $v \in \mathbb{S}(T^{1,0}X)$ and let $\{e_1 = v, e_2 ,\dots,e_n\}$ be a unitary frame. In this frame, ${}^tR_{1\overline 1 1 \overline 1} = {}^t\operatorname{HSC}(v)$. Then, by Equation (\ref{eqn:tCurv11}),
\[
{}^tR_{1\overline 1 1\overline 1}+\left(\frac{t-1}{2}\right)^2\sum_r |T_{1r}^1|^2 = {}^cR_{1\overline 1 1 \overline 1} = {}^sR_{1\overline 1 1\overline 1}+\left(\frac{s-1}{2}\right)^2\sum_r |T_{1r}^1|^2.
\]
Since ${}^t\operatorname{HSC} \equiv {}^s\operatorname{HSC}$, we yield
$0 = \left((1-t)^2 - (1-s)^2\right)\sum_r|T^1_{1r}|^2$, from which it follows that either $t = s$, $t = 2-s$ or $\langle T(v,w), \overline{v} \rangle = 0$ for all $v,w \in T^{1,0}X$. This implies that $T \equiv 0$. Indeed, for each $w \in T^{1,0}X$, the endomorphism $T_w := T(w,\cdot)$ is skew-Hermitian, hence diagonalisable over $\mathbb{C}$, since for all $v \in T^{1,0}X$, $0 = \langle (T_w + T^*_w)v,\overline {v} \rangle$. Moreover, the condition $T_w v \perp v$ implies the eigenvalues of $T_w$ are identically zero. It follows that $T_w \equiv 0$ as required.
\end{proof}

\begin{rmk}
This phenomenon appears implicitly in the recent work of Chen--Nie \cite{ChenNieHSC} for compact Hermitian surfaces. Indeed,  they show that if $(X, \omega)$ is a compact Hermitian surface with ${}^t \text{HSC}_{\omega} \equiv c$, then for $t \in \mathbb{R} \backslash \{ -1, 3 \}$,  the metric $\omega$ is K\"ahler. On the other hand, for $t=-1$ or $t=3$,  $(X, \omega)$ must be an isosceles Hopf surface. Note that the map $t \mapsto 2-t$ interchanges $t=-1$ and $t=3$,  and fixes $t=1$. 
\end{rmk}

Denote by $\mathcal{F}_X \to X$ the unitary frame bundle. From considerations of the Schwarz lemma in the Hermitian category, Yang--Zheng \cite{YangZhengRBC} (see also \cite{LeeStreets}) introduced the following curvature:

\subsection*{4.1. The Gauduchon Real Bisectional Curvature}
\begin{defn}
Let $(X,\omega)$ be a Hermitian manifold. The \textit{$t$--Gauduchon real bisectional curvature} ${}^t \text{RBC}_{\omega}$ is the \index{real bisectional curvature} function $${}^t \text{RBC}_{\omega} : \mathcal{F}_X \times \mathbb{R}^n \backslash \{ 0 \} \longrightarrow \mathbb{R}, \hspace*{1cm} {}^t \text{RBC}_{\omega}(v) \ := \ \frac{1}{| v |^2} \sum_{\alpha, \gamma} {}^t R_{\alpha \overline{\alpha} \gamma \overline{\gamma}} v_{\alpha} v_{\gamma}.$$ Here, ${}^t R_{\alpha \overline{\beta} \gamma \overline{\delta}}$ denote the components of the $t$--Gauduchon curvature tensor with respect to the local unitary frame, and $v = (v_1, ..., v_n) \in \mathbb{R}^n \backslash \{ 0 \}$. We say that ${}^t \text{RBC}_{\omega} \leq \kappa$ if $\max_{(e,\lambda) \in \mathcal{F}_X \times \mathbb{R}^n \backslash \{ 0 \} } {}^t \text{RBC}_{\omega}(e,\lambda) \leq \kappa$. Similar definitions apply for ${}^t \text{RBC}_{\omega} \geq \kappa$ and ${}^t \text{RBC}_{\omega} \equiv \kappa$.
\end{defn}

\begin{rmk}
If we let $v \in T^{1,0}X$ and choose a local unitary frame $\{ e_{\alpha} \}$ for $T^{1,0}X$ such that $v$ is parallel to $e_1$,  then with respect to this frame, we have ${}^t \text{RBC}_{\omega}(v) = {}^t R_{1 \overline{1}1 \overline{1}}  = {}^t \text{HSC}_{\omega}(v)$. Hence, the ($t$--Gauduchon) real bisectional curvature dominates the ($t$--Gauduchon) holomorphic sectional curvature.  The (Chern) real bisectional curvature is not strong enough to dominate the (Chern) Ricci curvatures. A local example was constructed in \cite{YangZhengRBC}. 
\end{rmk}

\begin{rmk}
The (Chern) real bisectional curvature was introduced by Yang--Zheng \cite{YangZhengRBC}. The motivation comes exclusively from the Schwarz lemma in the Hermitian category. Indeed, if $f : (X, \omega_g) \to (Y, \omega_h)$ is a holomorphic map between Hermitian manifolds, the Chern--Lu incarnation of the Schwarz lemma (\cite{Chern, Lu, Royden, YauSchwarz, YangZhengRBC, Rubinstein, BroderSBC1, BroderSBC2}) requires a lower bound on the second Chern Ricci curvature of $\omega_g$ and if $\omega_h$ is K\"ahler (or more generally, (Chern) K\"ahler-like in the sense \cite{YangZhengCurvature}), Royden's polarization argument \cite{Royden} shows that the target curvature term is controlled by the (Chern) holomorphic sectional curvature. For a general Hermitian metric, however, the target curvature term is not controlled by the (Chern) holomorphic sectional curvature, but rather the (Chern) real bisectional curvature. A more refined variant -- the (Chern) \textit{second Schwarz bisectional curvature} -- was introduced by the first named author in \cite{BroderSBC1, BroderSBC2}, together with its links to convex geometry. This initiated a program to study various curvatures by viewing them as quadratic form-valued functions on the unitary frame bundle, which has been carried out in \cite{BroderSBC1, BroderSBC2, BroderGraph, BroderLA, BroderQOBC, BroderTangAltered}.
\end{rmk}

\subsection*{4.2. The Gauduchon Altered Real Bisectional Curvature}
In \cite{BroderTangAltered}, the first named author, together with Kai Tang, introduced the following variant of the real bisectional curvature:

\begin{defn}
Let $(X,\omega)$ be a Hermitian manifold. The \textit{Gauduchon altered real bisectional curvature} ${}^t \widetilde{\text{RBC}}_{\omega}$ is the function $${}^t \widetilde{\text{RBC}}_{\omega} : \mathcal{F}_X \times \mathbb{R}^n \backslash \{ 0 \} \longrightarrow \mathbb{R}, \hspace*{1cm} {}^t \widetilde{\text{RBC}}_{\omega}(v) \ := \ \frac{1}{| v |^2} \sum_{\alpha, \gamma} {}^t R_{\alpha \overline{\gamma} \gamma \overline{\alpha}} v_{\alpha} v_{\gamma}.$$
\end{defn}

Analogous to the $t$--Gauduchon real bisectional curvature, the $t$--Gauduchon altered real bisectional curvature dominates the $t$--Gauduchon holomorphic sectional curvature.

\subsection*{4.3. The Gauduchon Altered Holomorphic Sectional Curvature}
The altered real bisectional curvature exhibits similar behavior to the real bisectional curvature. We invite the reader to consult \cite{BroderTangAltered} for the details. The primary motivation for the study of the altered real bisectional curvature is given by the fact that the sum of the real bisectional curvature and the altered real bisectional curvature yields a curvature that is comparable to the holomorphic sectional curvature:

\begin{defn}
Let $(X, \omega)$ be a Hermitian manifold.  The \textit{Gauduchon altered holomorphic sectional curvature} is given in any local unitary frame by \begin{eqnarray*}
{}^t \widetilde{\text{HSC}}_{\omega}(v) & :=  & \frac{1}{| v |_{\omega}^2} \sum_{\alpha, \gamma} \left( {}^t R_{\alpha \bar{\alpha} \gamma \bar{\gamma}} + {}^t R_{\alpha \bar{\gamma} \gamma \bar{\alpha}} \right) v_{\alpha} v_{\gamma},
\end{eqnarray*}
where $v = (v_1, ..., v_n) \in \mathbb{R}^n \backslash \{ 0 \}$. 
\end{defn}

\begin{rmk}
The (Chern) altered holomorphic sectional curvature was formally introduced by the first named author and Kai Tang in \cite{BroderTangAltered}, although it appeared implicitly in earlier works (see, e.g., \cite{YangZhengRBC}). 
\end{rmk}

\begin{rmk}
Let $(X, \omega)$ be a Hermitian manifold. The $t$--Gauduchon altered holomorphic sectional curvature ${}^t \widetilde{\text{HSC}}_{\omega}$ and $t$--Gauduchon holomorphic sectional curvature ${}^t \text{HSC}_{\omega}$ are comparable in the sense that they have the same sign.

\end{rmk}

\begin{prop}
Let $(X, \omega)$ be a Hermitian manifold.  The Gauduchon real bisectional curvature and altered real bisectional curvatures are given in any unitary frame by  \begin{eqnarray*}
{}^t \text{RBC}_{\omega}(v) &=& t {}^c \text{RBC}_{\omega}(v) + (1-t) {}^c \widetilde{\text{RBC}}_{\omega}(v) \\
&& \hspace*{4cm} + \frac{(t-1)^2}{4 | v |_{\omega}^2} \sum_{i,k,q} \left( {}^c T_{ik}^q \overline{{}^c T_{ik}^q} - {}^c T_{i q}^k \overline{{}^c T_{iq}^k} \right) v_i v_k,\\ 
{}^t \widetilde{\text{RBC}}_{\omega}(v) &=& t {}^c \widetilde{\text{RBC}}_{\omega}(v) + (1-t) {}^c \text{RBC}_{\omega}(v) \\
&& \hspace*{4cm} + \frac{(t-1)^2}{4 | v |_{\omega}^2} \sum_{i,k,q} \left( {}^c T_{ik}^q \overline{{}^c T_{ki}^q} - {}^c T_{iq}^i \overline{{}^c T_{kq}^k} \right) v_i v_k. 
\end{eqnarray*} \end{prop}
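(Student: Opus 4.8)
The plan is to substitute the master curvature formula of Theorem~\ref{MainCurvatureFormula} directly into the two definitions and then contract indices. Fix a local unitary frame $\{e_\alpha\}$ and recall that Equation~(\ref{eqn:tCurv11}) reads
\[
{}^t R_{i \overline{j} k \overline{\ell}} \ = \ t\,{}^c R_{i \overline{j} k \overline{\ell}} + \frac{1-t}{2}\bigl({}^c R_{k \overline{j} i \overline{\ell}} + {}^c R_{i \overline{\ell} k \overline{j}}\bigr) + \Bigl(\frac{1-t}{2}\Bigr)^{2}\bigl({}^cT_{ik}^{r}\overline{{}^cT_{j\ell}^{r}} - {}^cT_{ir}^{\ell}\overline{{}^cT_{jr}^{k}}\bigr),
\]
and note $\bigl(\tfrac{1-t}{2}\bigr)^{2} = \tfrac{(t-1)^{2}}{4}$.

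For the first identity, set $i=j=\alpha$ and $k=\ell=\gamma$, multiply by $v_\alpha v_\gamma$, and sum over $\alpha,\gamma$. The term $t\,{}^c R_{\alpha\overline\alpha\gamma\overline\gamma}v_\alpha v_\gamma$ contributes $t|v|_\omega^{2}\,{}^c\text{RBC}_\omega(v)$ by definition, and the torsion term contributes $\tfrac{(t-1)^{2}}{4}\sum_{i,k,q}\bigl({}^cT_{ik}^{q}\overline{{}^cT_{ik}^{q}} - {}^cT_{iq}^{k}\overline{{}^cT_{iq}^{k}}\bigr)v_iv_k$ after relabelling $(\alpha,\gamma,r)\mapsto(i,k,q)$. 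For the cross terms $\tfrac{1-t}{2}\bigl({}^c R_{\gamma\overline\alpha\alpha\overline\gamma} + {}^c R_{\alpha\overline\gamma\gamma\overline\alpha}\bigr)v_\alpha v_\gamma$, the second summand is $|v|_\omega^{2}\,{}^c\widetilde{\text{RBC}}_\omega(v)$ by definition, while the first becomes the same after swapping the dummy indices $\alpha\leftrightarrow\gamma$; here one uses that $v\in\mathbb{R}^n$, so that $v_\alpha v_\gamma$ is symmetric in $\alpha,\gamma$. Adding the two and dividing by $|v|_\omega^{2}$ yields the stated formula for ${}^t\text{RBC}_\omega(v)$.

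The second identity is handled in the same way: substitute $i=\alpha$, $j=\gamma$, $k=\gamma$, $\ell=\alpha$ and multiply by $v_\alpha v_\gamma$. The leading term becomes $t|v|_\omega^{2}\,{}^c\widetilde{\text{RBC}}_\omega(v)$; the cross terms $\tfrac{1-t}{2}\bigl({}^c R_{\gamma\overline\gamma\alpha\overline\alpha} + {}^c R_{\alpha\overline\alpha\gamma\overline\gamma}\bigr)v_\alpha v_\gamma$ each sum to $|v|_\omega^{2}\,{}^c\text{RBC}_\omega(v)$ (one directly, one after relabelling); and the torsion term ${}^cT_{\alpha\gamma}^{r}\overline{{}^cT_{\gamma\alpha}^{r}} - {}^cT_{\alpha r}^{\alpha}\overline{{}^cT_{\gamma r}^{\gamma}}$ becomes, after renaming, $\tfrac{(t-1)^{2}}{4|v|_\omega^{2}}\sum_{i,k,q}\bigl({}^cT_{ik}^{q}\overline{{}^cT_{ki}^{q}} - {}^cT_{iq}^{i}\overline{{}^cT_{kq}^{k}}\bigr)v_iv_k$, as asserted.

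There is essentially no obstacle here: the single point requiring care is the dummy-index symmetrisation in the cross terms, which is legitimate precisely because $v$ is a real vector, and I would emphasise that no Chern curvature symmetry is actually invoked — each cross term is matched purely by relabelling. I would also remark in passing that both right-hand sides are manifestly real, consistent with the left-hand sides, since ${}^c\text{RBC}_\omega$, ${}^c\widetilde{\text{RBC}}_\omega$ and the two torsion corrections are invariant under complex conjugation (again using the reality of $v$ together with the Hermitian symmetry $\overline{{}^cR_{i\overline j k\overline\ell}} = {}^cR_{j\overline i\ell\overline k}$).
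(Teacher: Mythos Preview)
Your proof is correct and follows essentially the same route as the paper: both substitute the curvature formula of Theorem~\ref{MainCurvatureFormula} at the index patterns $(i,\overline{i},k,\overline{k})$ and $(i,\overline{k},k,\overline{i})$, then contract against $v_iv_k$. Your explicit remark that the two cross terms collapse after relabelling $\alpha\leftrightarrow\gamma$ because $v\in\mathbb{R}^n$ (so $v_\alpha v_\gamma$ is symmetric) is a useful clarification that the paper leaves implicit.
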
 \begin{proof}
In any unitary frame, the components of the Gauduchon curvature tensor are given by \begin{eqnarray*}
{}^t R_{i \overline{j} k \overline{\ell}} &=& {}^c R_{i \overline{j} k \overline{\ell}} + \frac{(1-t)}{2} \left( {}^c R_{k \overline{j} i \overline{\ell}} - 2 {}^c R_{i \overline{j} k \overline{\ell}} + {}^c R_{i \overline{\ell} k \overline{j}}  \right) \\
&& \hspace*{4cm} + \frac{(t-1)^2}{4} \sum_{q} \left( {}^c T_{i k}^q \overline{{}^c T_{j\ell}^q} - {}^c T_{i q}^{\ell} \overline{{}^c T_{jq}^{k}}\right).
\end{eqnarray*}
Hence,  \begin{eqnarray*}
{}^t R_{i \overline{i} k \overline{k}} &=& {}^c R_{i \overline{i} k \overline{k}} + \frac{(1-t)}{2} \left( {}^c R_{k \overline{i} i \overline{k}} - 2 {}^c R_{i \overline{i} k \overline{k}} + {}^c R_{i \overline{k} k \overline{i}} \right) \\
&& \hspace*{4cm} + \frac{(t-1)^2}{4} \sum_{q} \left( {}^c T_{ik}^q \overline{T_{ik}^q} - {}^c T_{i q}^k \overline{{}^c T_{iq}^k} \right),  \\
{}^t R_{i \overline{k} k \overline{i}} &=& {}^c R_{i \overline{k} k \overline{i}} + \frac{(1-t)}{2} \left( {}^c R_{k \overline{k} i \overline{i}} - 2 {}^c R_{i \overline{k} k \overline{i}} + {}^c R_{i \overline{i} k \overline{k}} \right) \\
&& \hspace*{4cm} + \frac{(t-1)^2}{4} \sum_{q} \left( {}^c T_{ik}^q \overline{T_{ik}^q} - {}^c T_{iq}^i \overline{{}^c T_{kq}^i} \right).
\end{eqnarray*}
\end{proof}

Since the altered holomorphic sectional curvature is defined to be the sum of the real bisectional and altered real bisectional curvature,  we immediately deduce:

\begin{thmfix}
Let $(X, \omega)$ be a Hermitian manifold.  The Gauduchon altered holomorphic sectional curvature is given by \begin{eqnarray}
{}^t \widetilde{\text{HSC}}_{\omega}(v) &=& {}^c \widetilde{\text{HSC}}_{\omega}(v) - \frac{(t-1)^2}{4 | v |_{\omega}^2} \sum_{i,k,q} \left( {}^c T_{iq}^i \overline{{}^c T_{kq}^k} + {}^c T_{iq}^k \overline{{}^c T_{iq}^k}\right)v_i v_k.
\end{eqnarray}

In particular, ${}^t \widetilde{\text{HSC}}_{\omega}  \leq {}^c \widetilde{\text{HSC}}_{\omega}$ for all $t \in \mathbb{R}$ and equality holds if and only if $t=1$ or $\omega$ is K\"ahler.  
\end{thmfix}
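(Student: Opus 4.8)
The plan is to read the identity straight off the preceding Proposition, since by definition $\,{}^t\widetilde{\text{HSC}}_\omega(v) = {}^t\text{RBC}_\omega(v) + {}^t\widetilde{\text{RBC}}_\omega(v)$ in any unitary frame. First I would simply add the two displayed formulas of that Proposition. The Chern-curvature contributions recombine as $t\,{}^c\text{RBC}_\omega + (1-t)\,{}^c\widetilde{\text{RBC}}_\omega + t\,{}^c\widetilde{\text{RBC}}_\omega + (1-t)\,{}^c\text{RBC}_\omega = {}^c\text{RBC}_\omega + {}^c\widetilde{\text{RBC}}_\omega = {}^c\widetilde{\text{HSC}}_\omega$, so every appearance of $t$ in the curvature part drops out. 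The surviving (torsion) part is $\tfrac{(t-1)^2}{4|v|_\omega^2}\sum_{i,k,q}\bigl({}^cT^q_{ik}\overline{{}^cT^q_{ik}} - {}^cT^k_{iq}\overline{{}^cT^k_{iq}} + {}^cT^q_{ik}\overline{{}^cT^q_{ki}} - {}^cT^i_{iq}\overline{{}^cT^k_{kq}}\bigr)v_iv_k$. Here I would invoke the antisymmetry ${}^cT^q_{ki} = -{}^cT^q_{ik}$ of the Chern torsion in its two lower indices, which turns ${}^cT^q_{ik}\overline{{}^cT^q_{ki}}$ into $-|{}^cT^q_{ik}|^2$, cancelling the $|{}^cT^q_{ik}|^2$ term; what remains is precisely $-\tfrac{(t-1)^2}{4|v|_\omega^2}\sum_{i,k,q}\bigl({}^cT^i_{iq}\overline{{}^cT^k_{kq}} + {}^cT^k_{iq}\overline{{}^cT^k_{iq}}\bigr)v_iv_k$, which is the asserted formula.

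For the ``in particular'' I would set $Q_v := \sum_{i,k,q}\bigl({}^cT^i_{iq}\overline{{}^cT^k_{kq}} + |{}^cT^k_{iq}|^2\bigr)v_iv_k$ and aim to prove $Q_v \ge 0$ for every real $v$ and every unitary frame; since $(t-1)^2 \ge 0$ this yields $\,{}^t\widetilde{\text{HSC}}_\omega(v) \le {}^c\widetilde{\text{HSC}}_\omega(v)$. The first piece of $Q_v$ is already a perfect square: because $v$ is real, $\sum_{i,k,q}{}^cT^i_{iq}\overline{{}^cT^k_{kq}}v_iv_k = \sum_q\bigl|\sum_i {}^cT^i_{iq}v_i\bigr|^2$. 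The \textbf{main obstacle} is the second piece $\sum_{i,k,q}|{}^cT^k_{iq}|^2 v_iv_k$, which is not diagonal in $(i,k)$: the work is to reorganise it — presumably by a Cauchy--Schwarz estimate that again exploits the lower-index antisymmetry of ${}^cT$ — into a manifestly nonnegative expression that controls $Q_v$ from below. This is bookkeeping rather than a conceptual step, but it is where the difficulty sits, and it should be the first thing to nail down.

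The equality claim I would treat separately, and it does not need the sharp lower bound. If $t = 1$ the two sides coincide tautologically. Conversely, suppose $\,{}^t\widetilde{\text{HSC}}_\omega \equiv {}^c\widetilde{\text{HSC}}_\omega$ with $t \ne 1$; then by the formula $Q_v = 0$ for every real $v$ at every point and in every unitary frame, i.e. the real quadratic form $v \mapsto Q_v$ vanishes identically, so all its symmetrised coefficients vanish. Taking $v = e_i$ gives $2\sum_q|{}^cT^i_{iq}|^2 = 0$, hence ${}^cT^i_{iq} = 0$ for all $i,q$; feeding this back and taking $v = e_i + e_k$ with $i \ne k$ kills the first term of $Q_v$ and leaves $\sum_q\bigl(|{}^cT^k_{iq}|^2 + |{}^cT^i_{kq}|^2\bigr) = 0$, so ${}^cT^k_{iq} = 0$ for all off-diagonal $i \ne k$ as well. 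Hence the Chern torsion vanishes identically and $\omega$ is K\"ahler; the converse is immediate, since for a K\"ahler metric the correction term in the formula is identically zero.
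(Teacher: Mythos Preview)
Your derivation of the displayed formula is exactly the paper's: the paper says only ``since the altered holomorphic sectional curvature is defined to be the sum of the real bisectional and altered real bisectional curvature, we immediately deduce'', and you have written out that sum, using the lower-index antisymmetry of ${}^cT$ to cancel the $|{}^cT^q_{ik}|^2$ terms. Nothing to change there.

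Your worry about the ``in particular'' clause is well placed, and in fact the obstacle you flag cannot be overcome: the quadratic form $Q_v$ is \emph{not} nonnegative for arbitrary real $v$. In complex dimension $2$, take any unitary frame with ${}^cT^1_{12}=1$ and ${}^cT^2_{12}=0$; a direct check gives $Q_v = 2v_1^2 + v_1v_2$, which equals $-1$ at $v=(1,-3)$. So the inequality ${}^t\widetilde{\text{HSC}}_\omega(e,v) \le {}^c\widetilde{\text{HSC}}_\omega(e,v)$ fails as a pointwise statement on $\mathcal{F}_X \times (\mathbb{R}^n\setminus\{0\})$, and no amount of Cauchy--Schwarz bookkeeping will rescue it.

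The paper does not prove the pointwise inequality either. Its only justification is the Remark immediately following the theorem, which really treats the \emph{unaltered} holomorphic sectional curvature: given a unit $u \in T^{1,0}X$, choose a unitary frame with $e_1 = u$ and read off
\[
{}^tR_{1\bar1 1\bar1} \ = \ {}^cR_{1\bar1 1\bar1} - \tfrac{(1-t)^2}{4}\textstyle\sum_r|{}^cT^1_{1r}|^2.
\]
Equivalently, in your notation this is the evaluation of the formula at the special vector $v=(1,0,\dots,0)$, where $Q_v = 2\sum_q|{}^cT^1_{1q}|^2 \ge 0$ trivially. That is the monotonicity actually used downstream (the corollary about ${}^t\text{HSC}_\omega$ versus ${}^c\text{HSC}_\omega$). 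So you should not try to prove $Q_v \ge 0$ in general; instead, either restate the inequality for $\text{HSC}_\omega$ directly, or restrict the altered version to vectors $v$ parallel to a frame element.

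Your equality argument is correct and is more than the paper writes out. Note that restricting to the adapted pairs $(e,(1,0,\dots,0))$ already suffices: equality there says $\sum_r|{}^cT^1_{1r}|^2 = 0$ for every choice of $e_1$, i.e.\ $\langle {}^cT(u,w),\bar u\rangle = 0$ for all $u,w$, and then the skew-Hermitian-endomorphism argument from the paper's proof that ${}^t\text{HSC}\equiv{}^s\text{HSC}$ forces $t=s$, $t=2-s$, or K\"ahler gives ${}^cT\equiv 0$.
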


\begin{rmk}
Note that we can already see the monotonicity theorem from Theorem \ref{MainCurvatureFormula}. Indeed, if we consider a unitary frame $\{ e_{\alpha} \}$ such that $v \in T^{1,0}X$ is parallel to $e_1$, then the formula in Theorem \ref{MainCurvatureFormula} implies that \begin{eqnarray*}
{}^t R_{1 \overline{1} 1 \overline{1}} &=& {}^c R_{1 \overline{1} 1 \overline{1}} - \frac{(1-t)^2}{4} \sum_r | {}^c T_{1r}^1 |^2.
\end{eqnarray*}
\end{rmk}

\begin{cor}
Let $(X, \omega)$ be a Hermitian manifold.  If ${}^t \widetilde{\text{HSC}}_{\omega} \equiv {}^s \widetilde{\text{HSC}}_{\omega}$, then $t=s$, $t=2-s$, or $\omega$ is K\"ahler. 
\end{cor}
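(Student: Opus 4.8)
The plan is to read the statement off directly from the monotonicity formula of Theorem \ref{MainHSCThm1} (the displayed identity in the preceding result), the key observation being that the Chern term cancels in the difference. Set
\[
Q_\omega(v) \ := \ \sum_{i,k,q} \left( {}^c T_{iq}^i \overline{{}^c T_{kq}^k} + {}^c T_{iq}^k \overline{{}^c T_{iq}^k} \right) v_i v_k
\]
for the quadratic form occurring there. Then, in any local unitary frame and for any $v \in \mathbb{R}^n \setminus \{0\}$, that formula yields
\[
{}^t \widetilde{\text{HSC}}_{\omega}(v) - {}^s \widetilde{\text{HSC}}_{\omega}(v) \ = \ \frac{(s-1)^2 - (t-1)^2}{4 | v |_{\omega}^2}\, Q_\omega(v),
\]
so the hypothesis ${}^t \widetilde{\text{HSC}}_{\omega} \equiv {}^s \widetilde{\text{HSC}}_{\omega}$ forces $\big((s-1)^2 - (t-1)^2\big)\, Q_\omega(v) = 0$ for every direction $v$ and every unitary frame. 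If $(s-1)^2 = (t-1)^2$ we are finished, since this is equivalent to $s-1 = \pm(t-1)$, i.e.\ $t = s$ or $t = 2-s$. Otherwise the scalar $(s-1)^2 - (t-1)^2$ is a nonzero constant, hence $Q_\omega \equiv 0$ in every frame and every direction, and it remains to deduce that $\omega$ is K\"ahler.

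For that last step I would argue exactly as in the proof of Theorem \ref{MainThmHSC2}. Evaluating $Q_\omega$ at the first vector $v = e_1$ of an arbitrary unitary frame gives $Q_\omega(e_1) = 2 \sum_q | {}^c T_{1q}^1 |^2 = 0$; since the frame is arbitrary, this gives $\langle {}^c T(v, w), \overline{v} \rangle = 0$ for every unit vector $v$ and every $w \in T^{1,0}X$, hence for all $v,w$ by rescaling. Fixing $w$ and putting $T_w := {}^c T(w, \cdot) \in \operatorname{End}(T^{1,0}X)$, the sesquilinear form $(v,v') \mapsto \langle T_w v, \overline{v'}\rangle$ then vanishes on the diagonal, hence vanishes identically by polarization, so $T_w = 0$; as $w$ was arbitrary, ${}^c T \equiv 0$ and $\omega$ is K\"ahler.

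There is no real obstacle here: once the Chern altered holomorphic sectional curvature is seen to cancel, the corollary is immediate from Theorem \ref{MainHSCThm1}. The only point needing (minor) care is the frame-dependence bookkeeping --- ${}^t \widetilde{\text{HSC}}_\omega$ is a function on $\mathcal{F}_X \times (\mathbb{R}^n \setminus \{0\})$, so the hypothesis is to be read as equality over all frames and all directions, which is precisely what licenses the $v = e_1$ specialization above in every frame. One could alternatively carry out the torsion-vanishing step through the skew-Hermitian diagonalisation argument used verbatim in the proof of Theorem \ref{MainThmHSC2}; the polarization remark is just shorter.
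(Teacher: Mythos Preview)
Your argument is correct and is exactly the intended one: the paper states this corollary without proof, as an immediate consequence of the preceding monotonicity formula, and you have filled in precisely the details one would expect, including the torsion-vanishing step borrowed from the proof of Theorem \ref{MainThmHSC2}. The polarization shortcut for $T_w \equiv 0$ is a clean alternative to the skew-Hermitian diagonalisation used there, and both are valid.
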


Since the altered holomorphic sectional curvature is comparable to the holomorphic sectional curvature,  the following useful consequences of the above monotonicity result are easily obtained: 

\begin{cor}
Let $(X, \omega)$ be a Hermitian manifold.  \begin{itemize}
\item[(i)] If ${}^c \text{HSC}_{\omega} \leq 0$, then ${}^t \text{HSC}_{\omega} \leq 0$ for all $t \in \mathbb{R}$.
\item[(ii)] If ${}^t \text{HSC}_{\omega}>0$ for some $t \in \mathbb{R}$,  then ${}^c \text{HSC}_{\omega}>0$.
\end{itemize}
\end{cor}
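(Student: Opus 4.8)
The plan is to derive both statements directly from the monotonicity result (the \texttt{thmfix} version of Theorem \ref{MainHSCThm1}) together with the comparability of the altered holomorphic sectional curvature ${}^t\widetilde{\text{HSC}}_{\omega}$ with the genuine holomorphic sectional curvature ${}^t\text{HSC}_{\omega}$ -- namely that the two always have the same sign. Both ingredients are already in hand: the monotonicity theorem gives the pointwise inequality ${}^t\widetilde{\text{HSC}}_{\omega}\le {}^c\widetilde{\text{HSC}}_{\omega}$ on all of $\mathcal{F}_X\times(\mathbb{R}^n\setminus\{0\})$, valid for every $t\in\mathbb{R}$, while the comparability is recorded as a remark and rests on the Berger-type polarization argument underlying the Broder--Tang formalism: choosing a unitary frame with first vector parallel to a given $v$ exhibits (a positive multiple of) every value of ${}^t\text{HSC}_{\omega}$ as a value of ${}^t\widetilde{\text{HSC}}_{\omega}$, and conversely averaging ${}^t\text{HSC}_{\omega}$ over the unitary group reproduces ${}^t\widetilde{\text{HSC}}_{\omega}$ at real arguments up to a positive constant.

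For part (i), suppose ${}^c\text{HSC}_{\omega}\le 0$. Comparability gives ${}^c\widetilde{\text{HSC}}_{\omega}\le 0$; the monotonicity theorem then forces ${}^t\widetilde{\text{HSC}}_{\omega}\le {}^c\widetilde{\text{HSC}}_{\omega}\le 0$ for every $t\in\mathbb{R}$; and a second application of comparability yields ${}^t\text{HSC}_{\omega}\le 0$. Part (ii) runs the same chain in reverse: if ${}^t\text{HSC}_{\omega}>0$ for some $t$, comparability gives ${}^t\widetilde{\text{HSC}}_{\omega}>0$, monotonicity promotes this to ${}^c\widetilde{\text{HSC}}_{\omega}\ge {}^t\widetilde{\text{HSC}}_{\omega}>0$, and comparability once more delivers ${}^c\text{HSC}_{\omega}>0$.

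There is essentially no serious obstacle once the monotonicity theorem is available; the only point requiring care is that comparability must be invoked in both directions -- for non-positivity when passing from ${}^c\text{HSC}_{\omega}$ to ${}^c\widetilde{\text{HSC}}_{\omega}$ and back from ${}^t\widetilde{\text{HSC}}_{\omega}$ to ${}^t\text{HSC}_{\omega}$, and for strict positivity in the reverse order for (ii). The cleanest exposition is therefore to first isolate comparability as a standalone statement (for each $t$, ${}^t\text{HSC}_{\omega}\le 0$ iff ${}^t\widetilde{\text{HSC}}_{\omega}\le 0$, and ${}^t\text{HSC}_{\omega}>0$ iff ${}^t\widetilde{\text{HSC}}_{\omega}>0$) and then feed the monotonicity inequality through it. I would also emphasize, as the interpretive payoff, that this shows negative Chern holomorphic sectional curvature is the strongest, and positive the weakest, condition among the Gauduchon holomorphic sectional curvatures.
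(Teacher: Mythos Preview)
Your proposal is correct and follows essentially the same approach as the paper: the paper simply states that ``since the altered holomorphic sectional curvature is comparable to the holomorphic sectional curvature, the following useful consequences of the above monotonicity result are easily obtained,'' and leaves the details implicit. Your chain --- comparability, then the monotonicity inequality ${}^t\widetilde{\text{HSC}}_{\omega}\le {}^c\widetilde{\text{HSC}}_{\omega}$, then comparability again --- is exactly what the paper intends, and you have spelled it out more carefully than the paper does; note also that the remark immediately preceding the corollary (giving ${}^tR_{1\bar 1 1\bar 1}={}^cR_{1\bar 1 1\bar 1}-\tfrac{(1-t)^2}{4}\sum_r|{}^cT_{1r}^1|^2$) already yields the pointwise inequality ${}^t\text{HSC}_\omega(v)\le {}^c\text{HSC}_\omega(v)$ directly, so one can bypass the altered curvature entirely if desired.
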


\begin{rmk}\label{MonotonicityRmk}
Recall that in \cite{Demailly1997}, Demailly constructed a compact (projective) Kobayashi hyperbolic surface which does not admit a Hermitian metric of negative Chern holomorphic sectional curvature. In light of the above corollary, we see that negative Chern holomorphic sectional curvature is the most restrictive among Gauduchon connections.  It is, therefore, natural to raise the following question:
\end{rmk}

\begin{q}
Let $X$ be a compact Kobayashi hyperbolic manifold.  Does $X$ admit a Hermitian metric with ${}^t \text{HSC}_{\omega} <0$ for some (range of) $t \in \mathbb{R}$? \end{q}

\begin{rmk}
The argument used to prove Demailly's algebraic hyperbolicity criterion \cite{Demailly1997} does not readily extend to Gauduchon connections. Indeed, the argument hinges upon the subbundle decreasing property for the (Chern) holomorphic sectional curvature. In particular, one needs to be able to compare the connection on $T^{1,0}X$ with the connection on $T^{1,0}\mathcal{C} \otimes \mathcal{O}_{\mathcal{C}}(\mathcal{D})$. Here, $f : \mathcal{C} \to X$ is a holomorphic map from a compact Riemann surface $\mathcal{C}$ with discriminant locus $\mathcal{D}$. For the Chern connection, it is clear, but defining a Gauduchon connection on $T^{1,0}\mathcal{C} \otimes \mathcal{O}_{\mathcal{C}}(\mathcal{D})$ requires a choice. 
\end{rmk}

In the other direction, Yang \cite{YangHSCYau} showed that a compact K\"ahler manifold with $\text{HSC}_{\omega}>0$ is projective and rationally connected. The standard metric on the Hopf surface $\mathbb{S}^3 \times \mathbb{S}^1$ has ${}^c \text{HSC}_{\omega}>0$, but the Hopf surface supports no rational curves. Indeed, since $\mathbb{P}^1$ is simply connected, any holomorphic map $\mathbb{P}^1 \to \mathbb{S}^3 \times \mathbb{S}^1$ lifts to the universal cover $\mathbb{P}^1 \to \mathbb{C}^2 \backslash \{ 0 \} \subset \mathbb{C}^2$, but such holomorphic maps are necessarily constant. Since the positivity of the Chern holomorphic sectional curvature ${}^c \text{HSC}_{\omega}$ is the weakest constraint on the Gauduchon holomorphic sectional curvatures, it is natural to ask the following:

\begin{q}
Let $(X, \omega)$ be a compact Hermitian manifold. If ${}^t \text{HSC}_{\omega}>0$ for some (range of) $t \in \mathbb{R}$, is $X$ rationally connected? 
\end{q}

We suspect that the Gauduchon holomorphic sectional curvature may play a role in theory of special manifolds \cite{CampanaSpecial} and Oka manifolds \cite{Oka} (c.f., Remark \ref{MonotonicityRmk}). Let $X$ be a compact complex manifold. Recall that a saturated coherent sheaf $\mathcal{L} \subset \Omega_X^p$ of rank one is said to be a \textit{Bogomolov sheaf} if it has Kodaira--Iitaka dimension $\kappa(X,\mathcal{L})=p>0$.  Morally, Bogomolov sheaves correspond to the canonical bundles of varieties of general type dominated by $X$.  More precisely,  a Bogomolov sheaf comes from a dominant orbifold morphism to an orbifold pair of general type (see \cite{CampanaSpecial, CampanaWinkelmann} for details and precise statements).  A complex manifold is said to be \textit{special} if there are no Bogomolov sheaves on $X$. It is known that rationally connected manifolds are special.  Recall that a complex manifold $X$ is said to be \textit{Oka} if every holomorphic map from a neighborhood of a compact convex set $K \subset \mathbb{C}^n$ to $X$ is a uniform limit on $K$ of entire maps $\mathbb{C}^n \to X$.  By \cite{CampanaWinkelmann}, a projective Oka manifold is special, but in general, the relationship between special and Oka manifolds remains a mystery. We pose the following question:

\begin{q}
Let $(X,\omega)$ be a (compact) complex manifold.  Can specialness or the Oka property be characterized by ${}^t \text{HSC}_{\omega}>0$ for some (range of) $t \in \mathbb{R}$? 
\end{q}

We invite the reader to consult \cite[$\S 11$]{OkaSurvey} for the present state of affairs concerning a curvature characterization of Oka manifolds.

\end{document}